\newtheorem*{theorem*}{Theorem}
\newtheorem{theorem}{Theorem}
\newtheorem{lemma}[theorem]{Lemma}
\newtheorem{proposition}[theorem]{Proposition}
\newtheorem{remark}{Remark}
\newenvironment{definition}[1][Definition]{\begin{trivlist}
\item[\hskip \labelsep {\bfseries #1}]}{\end{trivlist}}
\def\V{\mathcal{V}}
\def\B{\mathcal{B}}
\def\J{\mathcal{J}}
\def\K{\mathcal{K}}
\def\P{\mathcal{P}}
\def\X{\mathfrak{X}}
\def\H{\mathcal{H}}
\def\K{\mathcal{K}}
\def\U{\mathcal{U}}
\def\E{\mathcal{E}}
\title{Generic Properties of Geodesic Flows on Analytic Hypersurfaces of Euclidean Space}
\author{
  Andrew Clarke\\
}
\date{
Universitat Polit\`ecnica de Catalunya \\
andrew.michael.clarke@upc.edu \\[2ex]
\today
}
\begin{document}
\maketitle
\begin{abstract} \noindent
Consider the geodesic flow on a real-analytic closed hypersurface $M$ of $\mathbb{R}^n$, equipped with the induced metric. How commonly can we expect such flows to have a transverse homoclinic orbit? In this paper, we give the following two partial answers to this question: 
\begin{itemize}
\item
If $M$ is a real-analytic closed hypersurface in $\mathbb{R}^n$ (with $n \geq 3$) on which the geodesic flow with respect to the induced metric has a nonhyperbolic periodic orbit, then $C^{\omega}$-generically the geodesic flow on $M$ with respect to the induced metric has a hyperbolic periodic orbit with a transverse homoclinic orbit; and
\item
There is a $C^{\omega}$-open and dense set of real-analytic, closed, and strictly convex surfaces $M$ in $\mathbb{R}^3$ on which the geodesic flow with respect to the induced metric has a hyperbolic periodic orbit with a transverse homoclinic orbit.
\end{itemize}
These are among the first perturbation-theoretic results for real-analytic geodesic flows.
\end{abstract}

\section{Introduction} \label{sec_introduction}

It is well known that the geodesic flow on an ellipsoid is an integrable Hamiltonian system \cite{jacobi1884vorlesungen} (see also \cite{moser1980geometry, moser1980various} and \cite{tabachnikov2002ellipsoids} for modern proofs). It is believed that $n$-dimensional ellipsoids and surfaces of revolution are among very few examples of integrable geodesic flows on closed hypersurfaces of Euclidean space. On the other hand, the presence of a nontrivial hyperbolic basic set implies the existence of chaotic motions. Standard results imply that if the system has a hyperbolic periodic orbit and a transverse homoclinic, then it has a nontrivial hyperbolic basic set \cite{poincare1899methodes, shilnikov1965case, smale1967differentiable}(see also Theorem 6.5.5 of \cite{katok1995introduction}). The existence of such a set is a $C^2$-open property of the Hamiltonian function, and in the current article the Hamiltonian depends on the curvature of the hypersurface (i.e. the second derivative). Therefore the existence of such a set is a $C^4$-open property (and therefore $C^r$-open for $r=5,6\dots, \infty, \omega$) of the hypersurface. A long-term goal is to prove that this property is also \emph{dense} in the $C^{\omega}$-topology. The geodesic flow on manifolds of negative curvature is Anosov \cite{anosov1967geodesic}, but less is known in the convex case. This paper establishes two results in this direction. Firstly, there is a residual set of closed real-analytic hypersurfaces $M$ in $\mathbb{R}^n$ (where $n \geq 3$) such that if $M$ has a nonhyperbolic closed geodesic, then the geodesic flow with respect to the Euclidean metric has a nontrivial hyperbolic basic set. Secondly, there is a $C^{\omega}$ open and dense set of real-analytic, closed, and strictly convex surfaces in $\mathbb{R}^3$ on which the geodesic flow with respect to the Euclidean metric has a nontrivial hyperbolic basic set.

\subsection{Main Results}

Let $d \geq 1$, and denote by $\V$ the set of all real-analytic functions $Q: \mathbb{R}^{d+2} \to \mathbb{R}$ such that the set
\begin{equation} \label{eq_manifoldequation}
M = M(Q) = \{ x \in \mathbb{R}^{d+2} : Q(x) = 0 \}
\end{equation}
is a closed hypersurface of $\mathbb{R}^{d+2}$. The geodesic flow $\phi^t$ takes a point $x \in M$ and a tangent vector $u \in T_x M$ and follows the unique geodesic through $x$ in the direction $u$ at a constant speed $\| u \|$. The energy $\frac{u^2}{2}$ is preserved.

Closed geodesics on $M$ correspond to periodic orbits of the geodesic flow. A method of construction of closed geodesics on any Riemannian manifold was proposed by Birkhoff \cite{birkhoff1917dynamical, birkhoff1927dynamical} (see also \cite{croke1988area}; see \cite{colding2011course} for a modern exposition). This implies that the geodesic flow on $M$ has a periodic orbit $\gamma$. Consider a transverse section to $\gamma$ in $TM$, and the corresponding Poincar\'e map of the periodic orbit $\gamma$ in the energy level $\| u \| = c$. The periodic orbit $\gamma$ is said to be:
\begin{itemize}
\item
\emph{Parabolic} if 1 is an eigenvalue of the linearisation of the Poincar\'e map;
\item
\emph{Degenerate} if the linearised Poincar\'e map has an eigenvalue equal to a root of unity;
\item
\emph{Hyperbolic} if the linearised Poincar\'e map has no eigenvalue of absolute value 1;
\item
\emph{Elliptic} if it is nondegenerate and nonhyperbolic; and
\item
\emph{$q$-elliptic} if the linearised Poincar\'e map has exactly $2q$ eigenvalues of absolute value 1.
\end{itemize}
Since the geodesic flow is a Hamiltonian system, the eigenvalues of the linearised Poincar\'e map come in reciprocal pairs. As the dynamics of the geodesic flow is the same on every energy level (see Section \ref{sec_geometryandperturbations}), there is a periodic orbit $\gamma$ in each energy level, and moreover the above classification is independent of the level set in consideration. Furthermore, the classification is independent of the choice of Poincar\'e map \cite{klingenberg1976lectures}. 

Let $\gamma$ be a hyperbolic periodic orbit of the geodesic flow $\phi^t$. Let $\theta \in \gamma$. Recall that the strong stable and strong unstable manifolds $W^{s,u}(\theta)$ are defined as
\begin{align}
W^s(\theta) &= \{ \bar{\theta} \in TM : \| \phi^t(\bar{\theta}) - \phi^t (\theta) \| \to 0 \text{ as } t \to+ \infty \}, \\
W^u(\theta) &= \{ \bar{\theta} \in TM : \| \phi^t(\bar{\theta}) - \phi^t (\theta) \| \to 0 \text{ as } t \to- \infty \}.
\end{align}
The stable and unstable manifolds $W^{s,u}(\gamma)$ of the hyperbolic periodic orbit $\gamma$ are 
\begin{equation}
W^s (\gamma) = \bigcup_{\theta \in \gamma} W^s (\theta), \quad W^u(\gamma) = \bigcup_{\theta \in \gamma} W^u(\theta),
\end{equation}
and we have
\begin{equation}
\dim W^s(\gamma) = \dim W^u(\gamma) = d+1.
\end{equation}
Now suppose $\eta$ is another hyperbolic periodic orbit. A point $\bar{\theta} \in TM$ is \emph{heteroclinic} if there are $\theta_1 \in \gamma, \theta_2 \in \eta$ such that
\begin{equation}
\| \phi^t (\bar{\theta}) - \phi^t (\theta_1) \| \to 0 \text{ as }t \to + \infty \quad \text{and} \quad \| \phi^t (\bar{\theta}) - \phi^t (\theta_2) \| \to 0 \text{ as }t \to - \infty.
\end{equation}
Clearly then $\bar{\theta} \in W^s(\gamma) \cap W^u(\eta)$. Suppose the orbits $\gamma, \eta$ lie in the unit tangent bundle $T^1M$ consisting of tangent vectors with unit length. Then $\bar{\theta}$ is a \emph{transverse heteroclinic} point if $\bar{\theta} \in W^s(\gamma) \cap W^u(\eta)$ and
\begin{equation}
T_{\bar{\theta}} W^s(\gamma) + T_{\bar{\theta}} W^u(\eta) = T_{\bar{\theta}} T^1M.
\end{equation}
If $\gamma = \eta$ then $\bar{\theta}$ is a \emph{homoclinic} point. If a homoclinic or heteroclinic point is transverse, then so is every point in its orbit.

Define the real-analytic topology on $\V$ as follows. Let $K \subset \mathbb{R}^{d+2}$ be a compact set, and let $\hat{K}$ be a compact complex neighbourhood of $K$. If $Q_1, Q_2 \in \V$, by definition they admit holomorphic extensions $\hat{Q}_1, \hat{Q}_2$ on $\hat{K}$. We say that $Q_1, Q_2$ are close on the compact set $K$ in the real-analytic topology if $\hat{Q}_1, \hat{Q}_2$ are uniformly close on $\hat{K}$. Recall that a subset of $\V$ is \emph{residual} if it is a countable intersection of open dense sets.

\begin{theorem} \label{theorem_homoclinicnearelliptic}
There is a residual set $\V_0 \subset \V$ such that for all $Q \in \V_0$, if the geodesic flow on $M(Q)$ with respect to the Euclidean metric has a nonhyperbolic periodic orbit, then it has a hyperbolic periodic orbit with a transverse homoclinic.
\end{theorem}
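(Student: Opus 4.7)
The plan is to combine Theorem \ref{theorem_kupkasmale} with the argument of Contreras \cite{contreras2010geodesic}. For each $k \in \mathbb{N}$ I fix an open, dense, invariant subset $\J_k \subset J^k_s(d)$ encoding nondegeneracy of the Birkhoff normal form up to order $k$, including in particular the nonvanishing of the torsion/twist coefficients required by Contreras's argument. Such $\J_k$ exist because the relevant Birkhoff coefficients are polynomial in the jet data, and their nonvanishing is an open, dense, conjugation-invariant condition. Applying Theorem \ref{theorem_kupkasmale} to each $\J_k$ yields a residual set $\K_k \subset \V$ such that for every $Q \in \K_k$, every closed geodesic of $M(Q)$ has Poincar\'e $k$-jet in $\J_k$ and every homoclinic/heteroclinic intersection between hyperbolic closed geodesics is transverse. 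Setting $\V_0 = \bigcap_{k \geq 1} \K_k$ gives a residual subset of $\V$.

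Now fix $Q \in \V_0$ and assume $M = M(Q)$ admits a nonhyperbolic closed geodesic $\gamma$. Since the $k$-jet of its Poincar\'e map lies in $\J_k$ for every $k$, the orbit $\gamma$ is nondegenerate, hence $q$-elliptic for some $q \geq 1$, and its Birkhoff normal form satisfies the torsion hypotheses of \cite{contreras2010geodesic}. I then invoke the purely dynamical core of Contreras's construction: the twist/KAM picture around such an elliptic orbit produces, in the same energy level, a hyperbolic periodic orbit $\gamma_\star$ of high period arbitrarily close to $\gamma$ together with a homoclinic orbit to $\gamma_\star$ generated by the instability zones between nearby KAM tori (in two degrees of freedom this rests on Le Calvez's theory of area-preserving twist maps; in higher dimensions the analogous statement is obtained from the Moser normal form together with the nondegeneracy encoded by $\J_k$).

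Since $Q \in \K_k$, conclusion (ii) of Theorem \ref{theorem_kupkasmale} forces every homoclinic intersection of $\gamma_\star$ to be transverse, so $\gamma_\star$ is a hyperbolic closed geodesic with a transverse homoclinic, completing the proof. The main obstacle is verifying that Contreras's construction, originally phrased for $C^r$-perturbations of a Riemannian metric on a fixed manifold, is applicable in the present $C^{\omega}$-perturbation-of-hypersurface framework. The essential inputs to his argument are the Klingenberg-Takens-type theorem (to realise a nondegenerate Birkhoff normal form through admissible perturbations) and the Kupka-Smale-type theorem (to upgrade connections to transverse ones); both are furnished here by Theorems \ref{theorem_ktlocal} and \ref{theorem_kupkasmale}. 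Once these are available, the remainder of Contreras's argument is intrinsic to the local symplectic dynamics near $\gamma$ in a fixed energy level, and transfers without essential modification to our setting.
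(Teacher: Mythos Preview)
Your proposal is correct and follows essentially the same route as the paper: the paper does not give a self-contained proof either, but states that once Theorem~\ref{theorem_kupkasmale} is available, Contreras's argument from \cite{contreras2010geodesic} applies directly, and gives only a brief sketch. Your construction of $\V_0$ as a countable intersection of the residual sets $\K_k$ coming from Theorem~\ref{theorem_kupkasmale} applied to suitable jet conditions $\J_k$ is exactly the right way to package the needed genericity.

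One point of imprecision worth flagging: your description of the ``purely dynamical core'' of Contreras's argument skips a step that the paper's sketch emphasises. In dimensions $d \geq 2$, one does not pass directly from a $q$-elliptic orbit to a nearby hyperbolic one via ``instability zones between KAM tori''; rather, one first uses the Arnaud--Herman mechanism to produce, generically, a \emph{1-elliptic} periodic orbit near $\gamma$, and it is on the 2-dimensional centre manifold of \emph{that} orbit that Le Calvez's theorem on Kupka--Smale twist maps yields a hyperbolic periodic point with a transverse homoclinic. The hyperbolicity and transversality then lift to the full phase space via the normal hyperbolicity of the centre manifold. Your invocation of Theorem~\ref{theorem_kupkasmale}(ii) at the end is not quite what delivers transversality here; the transversality already comes out of Le Calvez's result within the centre manifold. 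This does not affect the validity of the overall argument, but it is worth getting the attribution of the steps right.
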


Denote by $\V^c$ the subset of $\V$ consisting of functions $Q$ for which $M = M(Q)$ is strictly convex. In the case of real-analytic, strictly convex surfaces in $\mathbb{R}^3$ we obtain the following general result. 

\begin{theorem} \label{theorem_3dgenerichyperbolicset}
If $d=1$ so that for each $Q \in \V^c$, the set $M = M(Q)$ is a real-analytic, closed, and strictly convex surface in $\mathbb{R}^3$, then there is a $C^{\omega}$ open and dense set $\V^* \subset \V^c$ such that for every $Q \in \V^*$, the geodesic flow with respect to the Euclidean metric on the manifold $M(Q)$ has a hyperbolic periodic orbit with a transverse homoclinic.
\end{theorem}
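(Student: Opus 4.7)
The plan is to combine the real-analytic Kupka-Smale-type results established above with the Knieper-Weiss strategy \cite{knieper2002c}, which in turn rests on Mather's theorem \cite{mather1981invariant} on monotone twist maps of the annulus. Let $\J \subset J^1_s(1)$ be the open, dense, invariant subset consisting of $1$-jets whose linear part has no eigenvalue equal to a root of unity, and let $\K \subset \V$ be the residual set provided by Theorem \ref{theorem_kupkasmale} for this $\J$. Let $\V_0 \subset \V$ be the residual set of Theorem \ref{theorem_homoclinicnearelliptic}. Define $\V^{**} = \V^c \cap \K \cap \V_0$, which is residual in the Baire space $\V^c$. As noted in the introduction, the existence of a hyperbolic closed geodesic with a transverse homoclinic is a $C^4$-open (hence $C^{\omega}$-open) condition on $Q$; so once we show this condition holds on all of $\V^{**}$, the set $\V^*$ of all $Q \in \V^c$ satisfying the condition will be $C^{\omega}$-open, and will contain the dense set $\V^{**}$, hence be $C^{\omega}$-open and dense.

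Fix $Q \in \V^{**}$ and set $M = M(Q)$. I distinguish two cases. In Case 1, $M$ has a nonhyperbolic closed geodesic; Theorem \ref{theorem_homoclinicnearelliptic} then directly produces a hyperbolic closed geodesic with a transverse homoclinic orbit, and we are done. In Case 2, every closed geodesic on $M$ is hyperbolic. Following Knieper-Weiss, take the shortest closed geodesic $\gamma_0$ on the strictly convex surface $M$ and form a Birkhoff annulus section $\Sigma \subset T^1 M$ bounded by (two copies of) $\gamma_0$. The first-return map $f : \Sigma \to \Sigma$ is a monotone twist map of the annulus for which $\gamma_0$ corresponds to hyperbolic fixed points on $\partial \Sigma$. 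Since $\gamma_0$ is hyperbolic, no essential invariant circle of $f$ passes through the corresponding fixed points, so Mather's theorem supplies heteroclinic connections between adjacent Aubry-Mather sets of $f$. Choosing rational rotation numbers in the twist interval produces such a connection between two periodic orbits of $f$, which lift to closed geodesics on $M$ that are hyperbolic by the case hypothesis. Theorem \ref{theorem_kupkasmale}(ii) then upgrades this connection to a transverse homoclinic or heteroclinic intersection; in the heteroclinic case, the Birkhoff-Smale theorem yields a hyperbolic basic set containing hyperbolic periodic orbits with transverse homoclinic orbits, completing Case 2.

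The main obstacle is in Case 2: one must verify that the Mather-Knieper-Weiss machinery genuinely outputs a connection between two \emph{periodic} Aubry-Mather sets, so that the resulting orbit is homoclinic or heteroclinic to closed geodesics of the flow and Theorem \ref{theorem_kupkasmale}(ii) applies. This is a matter of invoking Mather's construction of heteroclinic orbits between adjacent Aubry-Mather sets together with the density of rational rotation numbers in the twist interval of $f$, exactly as carried out in \cite{knieper2002c}; the contribution of the present proof is to replace their $C^{\infty}$-generic Kupka-Smale inputs by the $C^{\omega}$-generic statements of Theorems \ref{theorem_kupkasmale} and \ref{theorem_homoclinicnearelliptic} so that the output holds on a $C^{\omega}$-open and dense set of hypersurfaces.
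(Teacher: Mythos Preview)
Your overall architecture---splitting into the nonhyperbolic case (handled by Theorem~\ref{theorem_homoclinicnearelliptic}) and the all-hyperbolic Kupka--Smale case, and observing that the homoclinic condition is $C^4$-open---matches the paper exactly. The gap is in Case~2, where you have misidentified which theorem of Mather is at work and consequently left the key step unjustified.

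The result cited as \cite{mather1981invariant} is \emph{not} an Aubry--Mather connecting-orbit theorem for monotone twist maps; it is Mather's theorem on area-preserving surface homeomorphisms, whose conclusion is that for a sectorial fixed point with no fixed connections, the closures of any two stable/unstable branches coincide. The argument of Knieper--Weiss (and of the paper) runs as follows: take the Birkhoff global section determined by a simple closed (minimax) geodesic $\gamma$; invoke the theorem of the three closed geodesics to produce a second simple closed geodesic $\eta$, hence a hyperbolic fixed point $z$ of the return map $P$ in the \emph{interior} of the annulus; since all closed geodesics are hyperbolic, every fixed point of $P$ is sectorial, and the Kupka--Smale hypothesis rules out fixed connections; Mather's theorem then forces the closures of the stable and unstable branches of $z$ to coincide; finally a Knieper--Weiss lemma upgrades this to a topological crossing of $W^s(z)$ and $W^u(z)$, which by Kupka--Smale is transverse.

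Your proposed substitute---use hyperbolicity of $\gamma_0$ to rule out invariant circles, then invoke Mather's variational construction of heteroclinic orbits between Aubry--Mather sets of rational rotation number---does not go through as written. Hyperbolicity of the boundary geodesic says nothing about the existence of essential invariant circles in the interior of the annulus (such circles can have irrational rotation number and contain no periodic points), and Mather's connecting-orbit theorem requires precisely the absence of an invariant circle separating the two rotation numbers in question. You never establish this, and it is not what Knieper--Weiss actually do. The missing ingredients are the three-closed-geodesics theorem (to get an interior hyperbolic fixed point) and the correct 1981 Mather theorem on branch closures.
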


\begin{remark}
It is possible that Theorem \ref{theorem_3dgenerichyperbolicset} is also true in the nonconvex case (i.e. for an open and dense set in $\V$). The proof relies on the existence of a global surface of section. In the strictly convex case, the existence of such a surface was shown by Birkhoff \cite{birkhoff1927dynamical}. In the case of 3-dimensional Reeb flows, conditions were given by Hofer, Wysocki, and Zehnder which, if satisfied, guarantee the existence of a global surface of section \cite{hofer2002pseudoholomorphic, hofer2003finite}. The restriction of the geodesic flow to the unit tangent bundle is a 3-dimensional Reeb flow. One would need to check that the conditions of those papers are $C^{\omega}$-generically satisified by geodesic flows on real-analytic closed surfaces in $\mathbb{R}^3$, before applying the same reasoning as in Section \ref{section_3dgenericityofhyperbolicsets}.
\end{remark}

We point out that Theorems \ref{theorem_homoclinicnearelliptic} and \ref{theorem_3dgenerichyperbolicset} are well-known in the case of $C^{\infty}$ perturbations of the metric (as opposed to the case of $C^{\omega}$ perturbations of the hypersurface considered here). A comparison of Theorems \ref{theorem_homoclinicnearelliptic} and \ref{theorem_3dgenerichyperbolicset} to existing results is made in Section \ref{sec_comparisonwithexistingresults}, and a discussion of the additional challenges presented by the setting of analytic perturbations of the hypersurface can be found in Section \ref{sec_novelcontributions}.

\subsection{Main Ideas of the Proof}

The main tool (Theorem \ref{theorem_kupkasmale}) used in the proof of both Theorem \ref{theorem_homoclinicnearelliptic} and Theorem \ref{theorem_3dgenerichyperbolicset} is an analogue of the Kupka-Smale theorem for geodesic flows on analytic hypersurfaces $M=M(Q)$ with $Q \in \V$. It tells us that, for generic $Q\in \V$, the $k$-jet of the Poincar\'e map of every closed geodesic on $M(Q)$ is in general position, and all homoclinic and heteroclinic connections between hyperbolic closed geodesics on $M(Q)$ are transverse. Before stating the theorem, we introduce some salient features of the set of $k$-jets. 

For $k \in \mathbb{N}_0$, let $J_s^k (d)$ denote the set of $k$-jets of symplectic autormorphisms of $\mathbb{R}^{2d}$ that fix the origin. If $f$ is a symplectic automorphism of $\mathbb{R}^{2d}$ with a fixed point $x$, we let $J^k_x f$ denote the $k$-jet of $f$ at $x$. For two such symplectic automorphisms $f,g$, define the product 
\begin{equation}
J^k_x f \cdot J^k_x g = J^k_x (g \circ f).
\end{equation}
Notice that the Poincar\'e map of a geodesic on $M$ is such a symplectic automorphism of a neighbourhood of the origin after making a suitable coordinate transformation (e.g. Fermi coordinates - see Section \ref{sec_geometryandperturbations}). Moreover, the geodesic need not be closed: one can simply consider the Poincar\'e map between two transverse sections along a geodesic. A set $\J \subset J_s^k (d)$ is said to be \emph{invariant}\footnote{The reason we require invariance is that the property of the $k$-jet of a Poincar\'e map belonging to an invariant set $\J \subseteq J^k_s(d)$ is independent of the choice of coordinates and Poincar\'e section.} if $\sigma \J \sigma^{-1} = \J$ for all $\sigma \in J_s^k (d)$.

\begin{theorem} \label{theorem_kupkasmale}
Let $k \in \mathbb{N}$ and let $\J \subset J^k_s(d)$ be open, dense, and invariant. There is a residual set $\K \subset \V$ such that if $Q \in \K$ and $M = M(Q)$, then:
\begin{enumerate}[(i)]
\item \label{item_kupkasmale1}
The $k$-jet of the Poincar\'e map of every closed geodesic on $M$ lies in $\J$; and
\item \label{item_kupkasmale2}
Any homoclinic or heteroclinic connections between hyperbolic closed geodesics are transverse.
\end{enumerate}
\end{theorem}

We say that a hypersurface $M=M(Q)$ is of \emph{Kupka-Smale type} if $Q \in \K$. The deduction of Theorem \ref{theorem_homoclinicnearelliptic} from Theorem \ref{theorem_kupkasmale} is analogous to the deduction of Theorem C of \cite{contreras2010geodesic} from Theorem 2.5 of \cite{contreras2002genericity} (itself based on ideas of Zehnder \cite{zehnder1973homoclinic}), and so it is not repeated in this paper. The deduction of Theorem \ref{theorem_3dgenerichyperbolicset} from Theorem \ref{theorem_kupkasmale} is presented in Section \ref{section_3dgenericityofhyperbolicsets}. 

Theorem \ref{theorem_kupkasmale} itself is proved in Section \ref{section_kupkasmale}, and its proof has three main ingredients. The first of these, Theorem \ref{theorem_ktlocal}, allows us to bring the $k$-jet of the Poincar\'e map of a \emph{single} geodesic segment into general position by arbitrarily small analytic perturbations of the hypersurface. 

\begin{theorem} \label{theorem_ktlocal}
Let $Q \in \V$, and let $M=M(Q)$. Let $k \in \mathbb{N}$ and let $\J \subset J^k_s(d)$ be open, dense, and invariant. Let $\gamma : [0,1] \to M$ denote a nonconstant geodesic segment such that the normal curvature along $\gamma$ is not identically zero. Assume moreover that $\gamma (0)$ is not a point of self-intersection of $\gamma$. Let $l : [0,1] \to TM$ be the corresponding orbit segment $l=(\gamma, \gamma')$ of the geodesic flow. Let $\Sigma_0, \Sigma_1$ be transverse sections to $l$ at $l(0), l(1)$ respectively. Let $P_Q$ denote the Poincar\'e map from $\Sigma_0$ to $\Sigma_1$. Then we can find $\tilde{Q} \in \V$ arbitrarily close to $Q$ such that the perturbed Poincar\'e map $P_{\tilde{Q}}$ corresponding to the perturbed orbit segment $\tilde{l} : [0,1] \to TM(\tilde{Q})$ satisfies $J^k_{\tilde{l}(0)} P_{\tilde{Q}} \in \J$.
\end{theorem}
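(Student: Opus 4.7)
The plan is to follow the strategy of the classical Klingenberg--Takens theorem for perturbations of Riemannian metrics, with the key modification that our perturbations must come from varying the defining function $Q \in \V$ rather than directly perturbing the metric. The overall architecture is: (i) reduce the $k$-jet of the Poincar\'e map to a finite jet of the induced metric along $\gamma$; (ii) construct a finite-dimensional family of real-analytic perturbations $Q_\alpha$ of $Q$; (iii) show the map $\alpha \mapsto J^k_{l_\alpha(0)} P_{Q_\alpha}$ is submersive modulo the symplectic constraint; and (iv) invoke the openness, density, and invariance of $\J$.

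For step (i), I would introduce Fermi coordinates $(s, y_1, \ldots, y_d)$ along $\gamma$, in which the induced metric $g^Q$ takes the form $ds^2 + g^Q_{ij}(s, y)\, dy^i\, dy^j$ with $g^Q_{ij}(s, 0) = \delta_{ij}$ and $\partial_s g^Q_{ij}(s, 0) = 0$. Integrating the geodesic variational equations expresses $J^k_{l(0)} P_Q$ as a polynomial in the jet $j^{k+1}_\gamma g^Q$ of the metric along $\gamma$; the classical Klingenberg--Takens computation then shows that this jet-to-$k$-jet map is surjective onto the symplectically admissible directions. It therefore suffices to produce $Q$-perturbations that realise arbitrary perturbations of $j^{k+1}_\gamma g^Q$ at a well-chosen point.

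For steps (ii) and (iii), I would pick an interior point $p = \gamma(s_0)$, with $s_0 \in (0, 1)$, at which the normal curvature of $\gamma$ in $M$ is nonzero (an open condition, holding somewhere by hypothesis) and which is not a point of self-intersection of $\gamma$ (again an open condition, valid near $s = 0$ by hypothesis). Near $p$, write $M$ locally as a real-analytic graph over the tangent hyperplane and set $Q_\alpha = Q + \sum_\beta \alpha_\beta \psi_\beta$, where the $\psi_\beta$ form a finite family of real-analytic functions on $\mathbb{R}^{d+2}$---for instance, polynomials in $(x - p)$ multiplied by an entire rapidly-decaying factor such as $\exp(-\tau \|x-p\|^2)$. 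Perturbing $Q$ in this way deforms the local graph function, and hence $g^{Q_\alpha}$, in an explicitly computable manner; the real-analytic topology used on $\V$ controls the effect of $\alpha$ on every compact set through the holomorphic extensions.

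The main obstacle is step (iii): showing that this finite-dimensional family of embedding perturbations realises the full symplectic tangent space of $J^k_s(d)$ at $J^k_{l(0)} P_Q$. The difficulty, absent in the classical metric-perturbation setting, is that $g^{Q_\alpha}$ is not an arbitrary symmetric tensor but is constrained via the Gauss equation to be the restriction of the Euclidean metric to a perturbed hypersurface; one must verify that normal displacements of $M$ near $p$ generate all admissible variations of the relevant metric jet. The nonzero normal curvature at $p$ enters precisely here, guaranteeing that the second fundamental form does not degenerate tangentially to $\gamma$ and thereby allowing the required coupling between normal variations of $M$ and the higher-order coefficients of $g^{Q_\alpha}$. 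Once surjectivity is in hand, density of $\J$ furnishes arbitrarily small $\alpha$ with $J^k_{l_\alpha(0)} P_{Q_\alpha} \in \J$, invariance of $\J$ makes the conclusion independent of the choices of Fermi coordinates and transverse sections, and smallness of $\alpha$ yields the required closeness of $\tilde{Q} = Q_\alpha$ to $Q$.
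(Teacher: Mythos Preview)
Your architecture is right in outline, but step (iii) conceals the real obstruction and your proposed resolution does not work. You write that it ``suffices to produce $Q$-perturbations that realise arbitrary perturbations of $j^{k+1}_\gamma g^Q$ at a well-chosen point.'' This is false: in Fermi coordinates the first-order variation of the induced metric under $Q \to Q + \epsilon\psi$ is $\bar g(y_0,y) = 2\psi(y_0,y)\,\tilde C(y_0,y)$, a \emph{scalar function times a fixed symmetric matrix} determined by the second fundamental form of $M$. At each point the admissible directions of $\bar g$ span only a one-dimensional line in the space of symmetric $(d+1)\times(d+1)$ matrices, so you cannot realise arbitrary metric jets from hypersurface perturbations, no matter how many $\psi_\beta$ you take or how cleverly you localise near a single $p$. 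The nonvanishing of $\kappa$ guarantees $\tilde C_{00}\neq 0$, but it does not loosen this rank-one constraint. Consequently the reduction ``realise all metric jets, then invoke classical Klingenberg--Takens'' collapses.

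The paper bypasses this entirely. It never tries to hit arbitrary metric jets; instead it computes directly that a perturbation $\psi(y_0,y)=\alpha(y_0)\beta(y)$ with $J^k_0\beta=0$ produces a perturbation Hamiltonian whose $(k+1)$-jet on the section $\tilde\Sigma(t)$ is $\kappa(\gamma(t),\gamma'(t))\,\alpha(t)\,J^{k+1}_0\beta$, a homogeneous polynomial \emph{in $y$ alone}. These span only $\mathbb{R}_{k+1}[y]$, not $\mathbb{R}_{k+1}[y,v]$; the missing directions are recovered by pulling back along the family $\{D_{l(0)}P_{Q,t}\}_{t\in[a,b]}$ of linearised Poincar\'e maps and using that this family can be made $(k+1)$-general (after a preliminary small perturbation), so that $\{f_j\circ D_{l(0)}P_{Q,t_j}\}$ spans all of $\mathbb{R}_{k+1}[y,v]$. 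Perturbations are therefore planted at several distinct times $t_1<\cdots<t_N$ in $(a,b)$, not at a single $p$. Finally, the paper works first with $C^\infty$ compactly supported $\psi$ (so that $\gamma$ literally remains a geodesic and the effect is truly local), establishes the open submersion property in that category, and only then approximates the finite-parameter $C^\infty$ family by a real-analytic family $Q_\epsilon$ via the Broer--Tangerman scheme; your Gaussian-weighted polynomials would have to confront the global effect of non-compact support on the geodesic and on the rest of $M$, which is an avoidable complication.
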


\begin{remark} \label{remark_notsamemanifold}
The orbits $l, \tilde{l}$ do not lie on the same manifold, so it is not immediately clear that we can compare them. However we can fix some reference manifold $M_0$ such that the hypersurface $M$ in consideration admits a $C^{\omega}$-diffeomorphism $G:M_0 \to M$, so we can pull back the Euclidean metric on $M$ to $M_0$ via $G$ to get a Riemannian metric on the hypersurface $M_0$. Therefore we can consider $l, \tilde{l}$ as orbit segments of geodesic flows corresponding to different metrics on $M_0$. See Section \ref{sec_hypersurfaceperturbations}.
\end{remark}

\begin{remark}
The assumption that the normal curvature along $\gamma$ is not identically zero is necessary because the result of a perturbation of the hypersurface on the metric in Fermi coordinates is equal to some function times a matrix related to the curvature matrix (see Section \ref{sec_geometryandperturbations} for definitions and details). In particular, if the curvature is zero at a point, then so is the perturbation of the metric. This assumption is not particularly restrictive, as closed geodesics necessarily have this property, and therefore so too do homoclinic and heteroclinic geodesics. Moreover, given a geodesic segment with normal curvature identically zero, it is expected that a generic $C^{\omega}$-small perturbation of the hypersurface would curve this geodesic segment. 
\end{remark}

We prove Theorem \ref{theorem_ktlocal} in Section \ref{section_klingenbergtakens}. The second ingredient (Theorem \ref{theorem_transversalityofconnections}, which is stated and proved in Section \ref{section_kupkasmale}) of the proof of Theorem \ref{theorem_kupkasmale} allows us to make homoclinic and heteroclinic connections transverse by arbitrarily small analytic perturbations of the hypersurface. Note that Theorem \ref{theorem_kupkasmale} is not immediate from Theorems \ref{theorem_ktlocal} and \ref{theorem_transversalityofconnections}, for the following reason. We would like to proceed with the proof of Theorem \ref{theorem_kupkasmale} as follows: beginning with a hypersurface $M$, we show, by applying Theorems \ref{theorem_ktlocal} and \ref{theorem_transversalityofconnections} to each closed geodesic on $M$ of length at most $N$ (for some $N>0$), that we can obtain a hypersurface on which every closed geodesic of length at most $N$ satisfies properties $(i)$ and $(ii)$ of Theorem \ref{theorem_kupkasmale}. Denote by $\K(N)$ the set of such hypersurfaces. Therefore $\K(N)$ is dense; it is also open. Thus intersecting $\K(N)$ over all $N \in \mathbb{N}$ yields the residual set $\K$. The problem with this argument is that an arbitrary hypersurface $M$ may contain \emph{infinitely many} closed geodesics of length at most $N$  for finite positive $N$, and it does not make sense to apply the perturbation theorems infinitely many times. The following theorem is the last ingredient in the proof of Theorem \ref{theorem_kupkasmale}, as it provides us with a residual set $\B$ such that, for every $Q \in \B$, there are only finitely many closed geodesics on $M(Q)$ of length at most $N$ for any $N>0$. The proof of Theorem \ref{theorem_bumpymetric} is contained in Section \ref{section_bumpymetric}.

\begin{theorem} \label{theorem_bumpymetric}
There is a residual set $\B \subset \V$ such that if $Q \in \B$ then every closed geodesic on $M(Q)$ is nondegenerate.
\end{theorem}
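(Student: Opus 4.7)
The plan is to adapt Abraham's proof of the bumpy metric theorem to the setting where the Euclidean metric is held fixed and perturbations are made to the defining function $Q$, while working in the real-analytic topology. First I would reduce the statement to a countable intersection: write $\B = \bigcap_{k,\ell \geq 1} \B_{k,\ell}$, where $\B_{k,\ell}$ is the set of $Q \in \V$ such that no closed geodesic on $M(Q)$ of period at most $k$ has a primitive $\ell$-th root of unity as an eigenvalue of its linearised Poincar\'e map. It then suffices to show that each $\B_{k,\ell}$ is open and dense. Openness follows from a compactness/continuity argument: if $Q_n \to Q$ with each $M(Q_n)$ carrying a bad closed geodesic $\gamma_n$ of period at most $k$, Arzel\`a--Ascoli applied in the unit tangent bundles (which vary smoothly with $Q_n$ on a compact region) yields a subsequential limit $\gamma$ on $M(Q)$ of period at most $k$; the linearised Poincar\'e maps converge in $\mathrm{Sp}(2d)$, and the set of matrices with an $\ell$-th root of unity eigenvalue is closed.

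The heart of the proof is the density of $\B_{k,\ell}$. Given $Q_0$ and a degenerate closed geodesic $\gamma_0 \subset M(Q_0)$ of period at most $k$ with $\ell$-th root of unity eigenvalue $\zeta$, I would construct a finite-dimensional family of real-analytic perturbations of the form $Q_s = Q_0 - \langle s, F \rangle \,\|\nabla Q_0\|$ with $s \in \mathbb{R}^N$ near $0$ and $F = (f_1,\dots,f_N)$ a tuple of fixed real-analytic functions on $\mathbb{R}^{d+2}$; geometrically this corresponds to normal displacements of $M(Q_0)$. One then studies the analytic map $\Phi \colon s \mapsto P_s \in \mathrm{Sp}(2d)$, where $P_s$ is the linearised Poincar\'e map of the continuation $\gamma_s$ of $\gamma_0$ (which persists analytically in $s$; degeneracies can be followed by a standard variational/implicit-function argument, or by first perturbing along a direction that breaks the degeneracy). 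The crux is to choose $F$ so that $d\Phi(0)$ is surjective onto $T_{P_0}\mathrm{Sp}(2d)$: the infinitesimal variation $\partial P_s/\partial s_i|_{s=0}$ is an integral along $\gamma_0$ of a symplectic-matrix-valued expression built from $f_i$, $\nabla Q_0$, and the second fundamental form of $M(Q_0)$, using the Gauss--Weingarten formulae to pass from a normal displacement of the hypersurface to the induced change in the intrinsic metric and connection. Once surjectivity is established, the preimage under $\Phi$ of the closed codimension-one real-analytic subvariety $\{ P \in \mathrm{Sp}(2d) : \zeta \in \mathrm{spec}(P)\}$ is a proper analytic subset of a neighbourhood of $0 \in \mathbb{R}^N$, hence nowhere dense, so arbitrarily small $s$ exist with $Q_s \in \B_{k,\ell}$. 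The (at most countably many) other closed geodesics of period at most $k$, and the Bott tower of iterates at degenerate ones, are handled by iterating this argument through a length filtration, each stage exploiting the density just obtained to preserve previously established nondegeneracies by openness.

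The main obstacle is the infinitesimal surjectivity step in the analytic category. In the $C^\infty$ setting one simply takes $f_i$ to be smooth bumps supported near distinct points along $\gamma_0$, and a classical calculation (going back to Anosov and refined by Klingenberg--Takens) shows that these span the tangent space to $\mathrm{Sp}(2d)$. In the real-analytic setting compactly supported perturbations are unavailable, and every $f_i$ has global effect, so one must show both that an analytic family does span the required directions and that spurious contributions from outside $\gamma_0$ do not destroy the argument. I would address this by taking $f_i$ of the form $e^{-|x-p_i|^2/\sigma_i}$ (or suitable products of such Gaussians with harmonic polynomials) centred at well-chosen sample points $p_i$ along $\gamma_0$; for $\sigma_i$ small these approximate the classical localised perturbations in the sense relevant to the variational integrals, while remaining entire functions of controlled size in the complex neighbourhood used to define the real-analytic topology. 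Showing that this family achieves the full infinitesimal span, and that the approximation is tight enough to transfer the classical surjectivity result, is the single technical point on which the proof essentially turns; the rest of the argument then reduces to applying Sard to the analytic map $\Phi$ together with the filtration argument above.
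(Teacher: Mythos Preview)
Your overall plan---filtration by period, local perturbation to move the linearised Poincar\'e map off a bad eigenvalue variety, then Sard---is in the right spirit, but there is a genuine gap in the global step. You assume ``at most countably many'' closed geodesics of period at most $k$ and propose to handle them one at a time. This is not justified: on a round sphere or an ellipsoid there are continuous families of closed geodesics of equal length, and precisely the degenerate situation you are trying to perturb away is the one in which such families can occur. A one-orbit-at-a-time argument therefore cannot establish density of $\B_{k,\ell}$ without an additional mechanism.

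The paper resolves this by following Anosov's scheme rather than Abraham's original one. It considers the evaluation map $F(z,t,G) = (z, \psi^t_G(z))$ into $SM_0 \times SM_0$ and shows (Lemma~\ref{lemma_anosov1}) that $F$ is transverse to the diagonal $\Delta$ at every preimage point, provided either the orbit is nonparabolic or $t$ is its minimal period. Abraham's parametric transversality theorem then gives density of the set of $G$ for which $(z,t)\mapsto(z,\psi^t_G(z))$ is everywhere transverse to $\Delta$---which is exactly nonparabolicity of \emph{all} orbits in the relevant period range, regardless of their cardinality. The filtration is also different: the paper works with nonparabolicity (eigenvalue~$1$) rather than all roots of unity, using the double-indexed sets $\B_{Q_0}(m,n)$ and the bootstrap $\B_{Q_0}(3m/2)$ dense in $\B_{Q_0}(m,2m)$ dense in $\B_{Q_0}(m)$; nondegeneracy follows because an $\ell$-th-root-of-unity eigenvalue at minimal period $L$ becomes eigenvalue~$1$ at period $\ell L$. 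The key surjectivity lemma (Lemma~\ref{lemma_anosov2}) is weaker than what you propose: it only hits the $2d$-dimensional orthogonal complement of the flow direction, not all of $T_{P_0}\mathrm{Sp}(2d)$. Your stronger surjectivity claim is essentially the content of Theorem~\ref{theorem_ktlocal} (Klingenberg--Takens), which the paper proves separately and indeed invokes in Lemma~\ref{lemma_denseness2} to handle the finitely many orbits that persist once one is already in $\B_{Q_0}(m)$.

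On the real-analytic issue: the paper does not use Gaussians directly but follows the Broer--Tangerman route---first construct the required perturbation with $C^\infty$ bump functions (even delta functions for the computation), verify an open condition in $C^4$, then approximate by a real-analytic family. Your Gaussian idea is plausible, and the ``spurious global contribution'' worry you raise is real; the paper's approximation method sidesteps it entirely, since the condition to be verified is open in a weaker topology and therefore survives any sufficiently good $C^r$ approximation.
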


\subsection{Comparison to Existing Results}\label{sec_comparisonwithexistingresults}

In the following discussion, the setting of (at most $C^{\infty}$) perturbations of Riemannian metrics on a fixed closed manifold is referred to as the \emph{classical setting}.

In the classical setting, it was proved by Contreras that there is a $C^2$-open and dense set of $C^{\infty}$ Riemannian metrics on an arbitrary closed manifold $M$ for which the geodesic flow has a nontrivial hyperbolic basic set \cite{contreras2010geodesic}. His result hinges on an application of Ma\~n\'e's theory of dominated splittings \cite{mane1983oseledec}, and an analogue of Franks' Lemma for geodesic flows, which only works for $C^2$-small perturbations of the metric. Knieper and Weiss proved that, whenever there exists a global surface of section, by an arbitrarily $C^{\infty}$ small perturbation of the metric, a geodesic flow with a nontrivial hyperbolic basic set can be obtained \cite{knieper2002c}. Combining this with results of Hofer, Wysocki, and Zehnder \cite{hofer2002pseudoholomorphic, hofer2003finite} and an argument of Knieper and Weiss \cite{knieper2002c} implies $C^{\infty}$-generic existence of nontrivial hyperbolic basic sets on surfaces in $\mathbb{R}^3$ in the classical setting.

Theorem \ref{theorem_kupkasmale} is an analogue of the Kupka-Smale theorem for geodesic flows on real-analytic, closed, and strictly convex hypersurfaces of Euclidean space. Part \eqref{item_kupkasmale1} of Theorem \ref{theorem_kupkasmale} follows from Theorems \ref{theorem_ktlocal} and \ref{theorem_bumpymetric}, as pointed out by Anosov \cite{anosov1983generic}. Part \eqref{item_kupkasmale2} was proved in the classical setting by Contreras and Paternain \cite{contreras2002genericity}.

Theorem \ref{theorem_ktlocal} was originally proved for $k$-jets of $C^{k+1}$-smooth Poincar\'e maps arising in the classical setting in the $C^{k+1}$-topology for any $k \in \mathbb{N}$ by Klingenberg and Takens \cite{klingenberg1972generic}. For hypersurfaces with the Euclidean metric, it was first shown by Stojanov and Takens in \cite{stojanov1993generic}, but this result did not include the analytic case. An analogue for the case of Ma\~n\'e generic Hamiltonians was proved for $k=1$ in \cite{rifford2012generic} and for $k \geq 2$ in \cite{carballo2013jets}.

Theorem \ref{theorem_bumpymetric} was originally stated (without proof) by Abraham in the classical setting \cite{abraham1970bumpy} for $C^r$-smooth metrics where $5 \leq r < \infty$. Some time later, the first proof was produced by Anosov \cite{anosov1983generic}, which worked for $2 \leq r \leq \infty$. A $C^{\infty}$ version of the theorem for hypersurfaces with the Euclidean metric was proved by Stojanov \cite{stojanov1990bumpy}. The classical result is typically referred to as the bumpy metric theorem, but we resist that nomenclature in this instance to avoid any potential confusion with essential use of bump functions, since they fail to be real-analytic.

It was shown by Contreras that Theorem \ref{theorem_homoclinicnearelliptic} is equivalent to Theorem \ref{theorem_kupkasmale} in the classical setting \cite{contreras2010geodesic}. As his proof  of this equivalence applies directly, once Theorem \ref{theorem_kupkasmale} is proved, to the case of geodesic flows on real-analytic closed hypersurfaces of Euclidean space, we do not include it here. The complicated structure of orbits near elliptic periodic points was proved in the case of real-analytic symplectic diffeomorphisms of the plane with an elliptic fixed point by Zehnder in the 70's \cite{zehnder1973homoclinic}, and the proof in the case of geodesic flows is based on the ideas in that paper. The general idea is that the restriction of the Poincar\'e map of an elliptic closed geodesic to its centre manifold can be put in a Birkhoff normal form. Then, using techniques developed by Arnaud and Herman \cite{arnaud1992type}, it can be shown that such a map generically has a 1-elliptic periodic orbit. The restriction of the Poincar\'e map corresponding to the 1-elliptic periodic orbit to its 2-dimensional centre manifold is a twist map of the annulus, of Kupka-Smale type, and therefore has a transverse homoclinic orbit \cite{le1991proprietes}.

\subsection{Novel Contributions of This Paper}\label{sec_novelcontributions}

The reason that the proofs of these theorems from the classical setting do not apply directly to the present case is that the class of perturbations allowed here is more restrictive, for two reasons. The first reason is as follows. The Nash embedding theorem guarantees that any Riemannian manifold can be isometrically embedded in some Euclidean space $\mathbb{R}^N$, but small perturbations of the metric do not necessarily preserve $N$. In the next section we compute the effect of a perturbation of the hypersurface on the metric, and we will see that this results only in a small subset of all possible perturbations of the metric (see Lemmas \ref{lemma_embeddingexistence} and \ref{lemma_perturbationfermicoords}). It follows that many perturbations of the metric are not allowed in the current setting.

The second reason that our perturbations are more restrictive than the usual case is that the proofs presented here apply in the real-analytic topology (as well as weaker topologies). The real-analytic topology is very restrictive, as it does not allow the explicit use of bump functions when making perturbations. Therefore any perturbation of the hypersurface affects dynamics globally. This means there are phenomena that are $C^{\infty}$ generic, but cannot be obtained by a $C^{\omega}$ small perturbation, and so it is not clear a priori that the phenomena considered in this paper are $C^{\omega}$ generic. To overcome this difficulty, a method introduced by Broer and Tangerman \cite{broer1986differentiable} is used: determine open conditions (in a weaker topology, e.g. $C^4$) to be satisfied by a family of perturbations to obtain the desired effect for arbitrarily small values of the parameter; show that these conditions are satisfied by a locally-supported family of perturbations; and approximate the family of perturbed systems sufficiently well by a real-analytic family. Since the conditions are open in a weaker topology, they are satisfied by the real-analytic family since the approximation can be made arbitrarily well in $C^r$. In fact, this method can be used together with the proof of Anosov to obtain a real-analytic version of the bumpy metric theorem, and with the proof of Contreras and Paternain to obtain a real-analytic version of the Kupka-Smale theorem in the classical setting. This technique was also used in \cite{clarke2019arnold, gelfreich2017arnold, gonchenko2007homoclinic}. An example of a perturbation scheme that \emph{cannot} be adapted to the $C^{\omega}$ topology using this method can be found in \cite{cheng2004existence}. 

\subsection{Applications}

    The primary motivation for studying this problem is that the hypersurfaces obtained in Theorems \ref{theorem_homoclinicnearelliptic} and \ref{theorem_3dgenerichyperbolicset} satisfy the assumptions of the main theorem in \cite{clarke2019arnold}. In that paper it was shown that in the subset of $\V^c$ consisting of functions $Q$ for which the geodesic flow on $M(Q)$ has a hyperbolic periodic orbit and a transverse homoclinic, generically a form of Arnold diffusion occurs for billiard dynamics inside the hypersurface. The results of this paper show that these assumptions are satisfied generically for surfaces in $\mathbb{R}^3$, and generically near a hypersurface with an elliptic closed geodesic; a future goal is to prove that these assumptions are generically satisfied when every closed geodesic is hyperbolic. Such result would prove Theorem \ref{theorem_3dgenerichyperbolicset} in the case when $d>1$, and thus show that the Arnold diffusion phenomenon of \cite{clarke2019arnold} is generic in the set $\V^c$ in any dimension greater than or equal to 3. As a consequence of the results in the present paper, the genericity obtained in \cite{clarke2019arnold} in dimension 3 therefore no longer requires assumptions about the geodesic flow; moreover in higher dimension, the present results show that the genericity in \cite{clarke2019arnold} is obtained in a \emph{nonempty set}.

It is expected that the results of this paper will be used in further applications, in particular in proofs of instability of Hamiltonian systems. Indeed, the setting of perturbations of geodesic flows has been used, for example, in \cite{delshams2006orbits,gidea2017perturbations}, where it is assumed that there is a transverse homoclinic geodesic. Due to the aforementioned difficulties, there are not currently many proofs regarding Hamiltonian instability in the analytic topology, but the results of this paper provide a foundation from which such proofs could begin. 

The structure of this paper is as follows. In Section \ref{sec_geometryandperturbations}, we introduce some basic ideas regarding the geometry of the hypersurface, and show how a (local) perturbation of the manifold affects the geodesic flow. In Section \ref{section_bumpymetric}, Theorem \ref{theorem_bumpymetric} is proved. In Section \ref{section_klingenbergtakens}, Theorem \ref{theorem_ktlocal} is proved, and Theorem \ref{theorem_kupkasmale} is proved in Section \ref{section_kupkasmale}, along with a theorem that breaks homoclinic or heteroclinic connections by arbitrarily small real-analytic perturbations of the hypersurface (see \cite{contreras2002genericity, donnay1995transverse, petroll1996existenz} for proofs of similar results in the classical setting). In Section \ref{section_3dgenericityofhyperbolicsets}, it is shown how to combine results of this paper with the argument of Knieper and Weiss to prove Theorem \ref{theorem_3dgenerichyperbolicset}.

\vspace{5mm}
\noindent
\textit{Acknowledgements.}$\quad$The author would like to thank Dmitry Turaev for proposing the problem; Gonzalo Contreras, Marcel Guardia, Andr\'{e} Neves, and Guillermo Olic\'on M\'endez for useful conversations; and Luchezar Stoyanov for providing references \cite{stojanov1990bumpy, stojanov1993generic}. This project has received funding from the European Research Council (ERC) under the European Union’s Horizon 2020 research and innovation programme (Grant Agreement No 757802).

\section{Geometry and Perturbations} \label{sec_geometryandperturbations}
\subsection{Geometry of the Domain}

Let $Q \in \V$, and let $M = M(Q)$ be defined as in \eqref{eq_manifoldequation}. Then for $x \in M$, the unit normal vector
\begin{equation} \label{eq_unitnormal}
n(x) = - \frac{ \nabla Q (x)}{\| \nabla Q (x) \|}
\end{equation}
is inward-pointing\footnote{The minus sign is a convention that leads to the standard definitions of convexity and concavity.}. The curvature matrix $C(x)$ is the matrix of second partial derivatives of $Q$ divided by the norm of the gradient:
\begin{equation} \label{eq_curvaturematrix}
C(x) = \| \nabla Q(x) \|^{-1} \left( \frac{\partial^2 Q}{\partial x_i \partial x_j} (x) \right)_{i,j=1, \ldots, d+2}
\end{equation}
Let $u \in T_x M$. The shape operator $S(x): T_x M \to T_x M$ is defined as
\begin{equation} \label{eq_shapeoperator}
S(x)u = -Dn(x)u = C(x)u - \langle C(x)u, n(x) \rangle n(x).
\end{equation}
This enables a definition of the normal curvature at $x$ in the direction $u$ via
\begin{equation} \label{eq_normalcurvature}
\kappa(x,u) = \langle S(x)u,u \rangle = \langle C(x)u, u \rangle.
\end{equation}
Strict convexity means that for all $x \in M$ and $0 \neq u \in T_x M$ we have
\begin{equation} \label{eq_strictconvexity}
\kappa (x,u) >0.
\end{equation}
Let 
\begin{align}
\pi : TM & \longrightarrow M \\
T_x M \ni  u & \longmapsto x
\end{align}
denote the canonical projection along fibres of the tangent bundle. We use the notation $(x,u) \in TM$ to mean $x \in M$ and $u \in T_x M$, so that $\pi (x,u) = x$.

\subsection{The Geodesic Flow}

The geodesic flow on $M$ takes a tangent vector $(x,u) \in T M$, and follows the uniquely defined geodesic starting at $x$ in the direction of $u$ at a constant speed of $\| u \|$. Typically, it is introduced via the Hamiltonian function 
\begin{equation} \label{eq_metrichamiltonian}
H(x,u) = \frac{1}{2} g(x) (u,u)
\end{equation}
where $g$ is a Riemannian metric, and $(x,u)$ are intrinsic coordinates on $TM$. In our case we use the induced metric and the coordinates of the ambient Euclidean space $\mathbb{R}^{d+2}$, so a different formulation is required.

Consider a smooth curve $\gamma : [a,b] \to M$. The tangential component $\gamma''(t)^{\top}$ of its acceleration is given by
\begin{equation} \label{eq_geodesicacceleration}
\gamma''(t)^{\top} = \gamma''(t) - \langle \gamma''(t), n (\gamma(t)) \rangle n(\gamma (t)).
\end{equation}
Since $\gamma'(t) \in T_{\gamma (t)} M$ and $n(\gamma (t))$ is normal to $M$ at $\gamma (t)$ for all $t \in [a,b]$ we have
\begin{equation} \label{eq_secondderivativecurvaturerelation}
0 = \frac{d}{dt} \langle \gamma'(t), n(\gamma (t)) \rangle = \langle \gamma''(t), n(\gamma(t)) \rangle - \kappa (\gamma (t), \gamma'(t)).
\end{equation}
The curve $\gamma$ is a geodesic if and only if $\gamma''(t)^{\top}=0$, so from \eqref{eq_geodesicacceleration} and \eqref{eq_secondderivativecurvaturerelation} we see that $\gamma$ is a geodesic if and only if
\begin{equation}
\gamma^{\prime \prime}(t) = \kappa (\gamma (t), \gamma^{\prime}(t)) \, n(\gamma (t)).
\end{equation}
It follows that the geodesic flow $\phi^t : T M \to T M$ is defined by 
\begin{equation} \label{eq_geodesicflowdef}
\left. \frac{d}{dt} \right|_{t=0} \phi^t (x,u) = X(x,u),
\end{equation}
where the vector field $X(x,u) = (\dot{x}, \dot{u})$ is given by
\begin{equation} \label{eq_geodesicfloweom}
	\begin{dcases}
		\dot{x} = u \\
		\dot{u} = \kappa (x,u) \, n (x).
	\end{dcases}
\end{equation}
Consider the function $H : T M \to \mathbb{R}$ given by
\begin{equation} \label{eq_geodesicflowhamiltonian}
H(x,u) = \frac{u^2}{2} + \kappa (x,u) \frac{Q(x)}{\| \nabla Q(x) \|}.
\end{equation}
It is not hard to see that $X = \Omega_{d+2} \nabla H$ where 
\begin{equation} \label{eq_standardsymplecticmatrix}
\Omega_{d+2} = \left(
\begin{array}{cc}
0 & I_{d+2} \\
-I_{d+2} & 0
\end{array} \right)
\end{equation}
is the standard symplectic matrix (and $I_{d+2}$ is the $(d+2)$-dimensional identity matrix). This is equivalent to the statement that the geodesic flow is a Hamiltonian system with symplectic form
\begin{equation} \label{eq_symplecticform}
\omega = dx \wedge du.
\end{equation}
In particular, the geodesic flow is the Hamiltonian flow associated with the Hamiltonian function $H$.

Notice that the second term of \eqref{eq_geodesicflowhamiltonian} vanishes identically on $T M$, and so the energy $\frac{u^2}{2}$ is conserved. Since the Hamiltonian $H$ is homogeneous of second order in $u$, the dynamics of the geodesic flow is the same on every energy level.

\subsection{Perturbations of the Hypersurface}\label{sec_hypersurfaceperturbations}

Let $Q \in \V$ and $M = M(Q)$. Consider the set $\E_Q$ of $C^{\omega}$ embeddings $G: M \hookrightarrow \mathbb{R}^{d+2}$, equipped with the real-analytic topology. For any $G \in \E_Q$, the set $G(M)$ is a $C^{\omega}$ hypersurface of $\mathbb{R}^{d+2}$. We analyse perturbations of $M$ by considering embeddings $G \in \E_Q$ that are close to the identity. In practice, it is inconvenient to deal with analytic embeddings, as they are supported globally on the hypersurface. Instead, we show how a local perturbation of the function $Q$ gives rise to a $C^{\infty}$ family of embeddings that differ from the identity only in a neighbourhood of a point on $M$. We will then use the technique of Broer and Tangerman to pass from $C^{\infty}$ families of hypersurfaces to $C^{\omega}$ perturbations of the original hypersurface $M$.

Let $x \in M$ and suppose we make a local perturbation
\begin{equation} \label{eq_perturbation}
Q \longrightarrow Q + \epsilon \psi
\end{equation}
where $\epsilon$ is small and $\psi$ is a $C^{\infty}$ function supported near $x$. This gives rise to a $C^{\infty}$ family of hypersurfaces $M_{\epsilon} = M(Q + \epsilon \psi)$ where $M_0 \equiv M$. 
\begin{lemma}\label{lemma_embeddingexistence}
There is a $C^{\infty}$ family of embeddings $G_{\epsilon}: M \hookrightarrow \mathbb{R}^{d+2}$ such that
\begin{equation}\label{eq_embeddingformula}
G_{\epsilon} (M) = M_{\epsilon}, \quad G_{\epsilon}(x) = x + \epsilon \, \psi (x) \, n(x) + O (\epsilon^2), 
\end{equation}
for all sufficiently small values of $\epsilon$, where $n(x)$ is the unit normal, defined by \eqref{eq_unitnormal}.
\end{lemma}
\begin{proof}
For all sufficiently small values of $\epsilon$, $M_{\epsilon}$ can be written as a graph over $M$. It follows that there is a $C^{\infty}$ family of functions $r_{\epsilon} : M \to \mathbb{R}$ for which the family of embeddings $G_{\epsilon} : M \hookrightarrow \mathbb{R}^{d+2}$ defined by
\begin{equation}
G_{\epsilon} (x) = x + \epsilon \, r_{\epsilon} (x) \, n(x)
\end{equation}
satisfies $G_{\epsilon}(M) = M_{\epsilon}$, for all sufficiently small values of $\epsilon$. Let us determine $r$, where $r_{\epsilon} = r + O(\epsilon)$. If $x \in M$, then $G_{\epsilon}(x) \in M_{\epsilon}$, so we must have
\begin{equation}
0=Q(G_{\epsilon}(x)) + \epsilon \psi (G_{\epsilon}(x)) = \epsilon \left[ r(x) \left\langle \nabla Q(x), n(x) \right\rangle + \psi (x) \right] + O(\epsilon^2)
\end{equation}
and so $r(x) = \| \nabla Q(x) \|^{-1} \psi (x)$ due to \eqref{eq_unitnormal}. Without loss of generality, we may assume that $\| \nabla Q(x) \| = 1$ for all $x \in M$. Indeed, if not, let $\tilde{Q} (x) = \| \nabla Q (x) \|^{-1} Q(x)$. Then $\tilde{Q} (x) =0$ if and only if $Q(x)=0$, so $M = M(\tilde{Q})$. Moreover, if $x \in M$ then
\begin{equation}
\nabla \tilde{Q}(x) = \| \nabla Q (x) \|^{-1} \nabla Q(x) + Q(x) \nabla \| \nabla Q (x) \|^{-1} = \| \nabla Q (x) \|^{-1} \nabla Q(x)
\end{equation}
since $Q(x)=0$ for $x \in M$. It follows that $\| \nabla \tilde{Q} (x) \| = 1$ for all $x \in M$, so we can replace $Q$ by $\tilde {Q}$. Equation \eqref{eq_embeddingformula} follows. 
\end{proof}
Using such embeddings $G_{\epsilon}$, we can consider the geodesic flow on $M_{\epsilon}$ with respect to the standard Euclidean metric $\left. \delta \right|_{M_{\epsilon}}$ as the geodesic flow on $M$ with respect to the Riemannian metric $g_{\epsilon} = G_{\epsilon}^* \left( \left. \delta \right|_{M_{\epsilon}} \right)$, thus allowing us to compare the perturbed and unperturbed flows. 

\subsection{Perturbations in Fermi Coordinates} \label{sec_fermicoordinates}

Let $Q \in \V$ and $M = M(Q)$. Recall that $\phi^t$ denotes the time-$t$ shift along orbits of the geodesic flow. Let $x \in M$, and define the \emph{exponential map}
\begin{align}
\exp_x : T_x M & \longrightarrow M \\
u & \longmapsto \phi^1 (x,u)
\end{align}
It is well-known that the exponential map is a diffeomorphism in a neighbourhood of $0 \in T_x M$.

Let $\gamma : [a,b] \to M$ be a nonconstant geodesic segment with no self-intersections, parametrised to have unit speed. Then we can choose an orthonormal basis $\gamma' (a), e_1, \ldots, e_d$ of $T_{\gamma (a)} M$. Moving this basis by parallel transport along the geodesic segment $\gamma$ gives an orthonormal basis $\gamma' (t), e_1 (t), \ldots, e_d (t)$ of $T_{\gamma (t)} M$ for each $t \in [a,b]$. It follows that there is a $\delta > 0$ such that if $B_{\delta}^d (0)$ denotes the ball of radius $\delta$ around the origin in $\mathbb{R}^d$ then the map
\begin{align}
\begin{split}\label{eq_fermichartmap}
h : [a,b] \times B_{\delta}^d (0) & \longrightarrow M \\
(y_0, y) = (y_0, y_1, \ldots, y_d) & \longmapsto \exp_{\gamma (y_0)} \sum_{j=1}^d y_j e_j (y_0)
\end{split}
\end{align}
defines a chart on $M$ (since $\gamma$ has no self-intersections). We can still use such `coordinates' if $\gamma$ has self-intersections, but they will not define a chart.

Let $(v_0, v) = (v_0, v_1, \ldots, v_d)$ denote the corresponding tangent coordinates, in the sense that $(v_0, v)$ corresponds to
\begin{equation}
\sum_{j=0}^d v_j \frac{\partial}{\partial y_j}
\end{equation}
in the tangent space. The geodesic $\gamma$ and its tangents, in these coordinates, are
\begin{equation} \label{eq_geodesicinfermicoords}
\gamma (t) = (t, 0) = (t, 0, \ldots, 0), \quad \gamma' (t) = (1,0) = (1, 0, \ldots, 0).
\end{equation} Let $g_{ij}$ denote the metric in these coordinates, and recall that $\delta_{ij}=1$ if $i=j$ and 0 otherwise. Then along the geodesic segment $\gamma$ we have
\begin{equation} \label{eq_fermiproperties}
g_{ij} = \delta_{ij}, \quad \frac{\partial}{\partial y_k} g_{ij} =0, \quad \frac{d}{dt} y_i = \delta_{0i}.
\end{equation}
In fact, the components of the metric in these coordinates are given by
\begin{equation}\label{eq_unperturbedcomponents}
g_{ij} (y_0,y) = \sum_{k,l=1}^{d+2} \frac{\partial x_k}{\partial y_i} \frac{\partial x_l}{ \partial y_j} \delta_{kl} = \left\langle \frac{\partial h}{\partial y_i} (y_0, y), \frac{\partial h}{\partial y_j} (y_0, y) \right\rangle
\end{equation}
where $h(y_0,y) = x$ is defined by \eqref{eq_fermichartmap}, where $\delta_{ij}$ are the components of the standard Euclidean metric, and where we have used the usual formula for the transformation of the metric under a change of coordinates \cite{stojanov1990bumpy, stojanov1993generic}. These coordinates were first introduced by Fermi and typically bear his name. See \cite{klingenberg1976lectures} for more details.

\begin{lemma}\label{lemma_perturbationfermicoords}
Suppose we make a perturbation $Q \to Q + \epsilon \psi$ as in \eqref{eq_perturbation}. Let $M_{\epsilon} = M (Q + \epsilon \psi)$, and denote by $G_{\epsilon} : M \hookrightarrow \mathbb{R}^{d+2}$ the corresponding embedding provided by Lemma \ref{lemma_embeddingexistence}. The variables $(y_0,y)$ define coordinates on a subset of $M_{\epsilon}$ via the map $G_{\epsilon} \circ h : [a,b] \times B_{\delta}^d (0) \to M_{\epsilon}$, and the pullback metric is
\begin{equation}
g_{\epsilon} = \left( G_{\epsilon} \circ h \right)^* \left( \left. \delta \right|_{M_{\epsilon}} \right) = g + \epsilon \bar{g} + O( \epsilon^2)
\end{equation}
where the components of $g$ are given by \eqref{eq_unperturbedcomponents}, and where the components of $\bar{g}$ are
\begin{equation}\label{eq_metricpertcomps}
\bar{g}_{ij} (y_0,y) = 2 \psi \left( h (y_0,y) \right) \tilde{C}_{ij} (y_0,y)
\end{equation}
where $\tilde{C}_{ij}$ are analytic functions of $(y_0,y)$ such that
\begin{equation}\label{eq_perturbationmatrix00}
\tilde{C}_{00} (y_0,0) = - \kappa \left( \gamma (y_0), \gamma' (y_0) \right), 
\end{equation}
where the normal curvature $\kappa$ is defined by \eqref{eq_normalcurvature}. 
\end{lemma}
\begin{proof}
We can write $G_{\epsilon} = \mathrm{Id} + \epsilon \Psi + O(\epsilon^2)$ where $\Psi (x) = \psi (x) n(x)$ by Lemma \ref{lemma_embeddingexistence}. By \eqref{eq_unperturbedcomponents}, the components of the pullback metric are
\begin{align}
\left( g_{\epsilon} \right)_{ij} (y_0,y) ={}& \left\langle \frac{\partial \left( G_{\epsilon} \circ h \right)}{\partial y_i} (y_0,y),  \frac{\partial \left( G_{\epsilon} \circ h \right)}{\partial y_j} (y_0,y) \right\rangle \\
={}& \left\langle \frac{\partial h}{\partial y_i} (y_0, y), \frac{\partial h}{\partial y_j} (y_0, y) \right\rangle + \epsilon \bigg[ \left\langle \frac{\partial \left( \Psi \circ h \right)}{\partial y_i} (y_0,y),  \frac{\partial h }{\partial y_j} (y_0,y) \right\rangle +  \\ 
& \qquad + \left\langle \frac{\partial h}{\partial y_i} (y_0,y),  \frac{\partial \left( \Psi \circ h \right)}{\partial y_j} (y_0,y) \right\rangle \bigg] + O (\epsilon^2) \\
={}& g_{ij} + \epsilon \bar{g}_{ij} + O( \epsilon^2)
\end{align}
where
\begin{equation}
\bar{g}_{ij} (y_0,y) = \left\langle \frac{\partial \left( \Psi \circ h \right)}{\partial y_i} (y_0,y),  \frac{\partial h }{\partial y_j} (y_0,y) \right\rangle + \left\langle \frac{\partial h}{\partial y_i} (y_0,y),  \frac{\partial \left( \Psi \circ h \right)}{\partial y_j} (y_0,y) \right\rangle. 
\end{equation}
By \eqref{eq_shapeoperator} we have
\begin{align}
\frac{\partial \left( \Psi \circ h \right)}{\partial y_i} (y_0,y) ={}& \left\langle \nabla \psi \left( h(y_0,y) \right), \frac{\partial h}{\partial y_i} (y_0,y) \right\rangle \, n \left(h(y_0,y) \right) - \\
& \qquad - \psi \left( h(y_0,y) \right) \, S \left( h(y_0,y) \right) \frac{\partial h}{\partial y_i} (y_0,y)
\end{align}
where $S$ is the shape operator. Since $\frac{\partial h}{\partial y_i} (y_0,y) \in T_{h(y_0,y)} M$ and $n \left( h(y_0,y) \right)$ is normal to $M$ at $h(y_0,y)$, it follows, using \eqref{eq_shapeoperator} again, that the components of $\bar{g}$ are given by \eqref{eq_metricpertcomps}, where
\begin{equation}
\tilde{C}_{ij} (y_0,y) = - \frac{1}{2} \bigg[ \left\langle C \left( h(y_0,y) \right) \frac{\partial h}{\partial y_i} (y_0,y), \frac{\partial h}{\partial y_j} (y_0,y) \right\rangle + \left\langle  \frac{\partial h}{\partial y_i} (y_0,y), C \left( h(y_0,y) \right) \frac{\partial h}{\partial y_j} (y_0,y) \right\rangle \bigg]
\end{equation}
with $C$ denoting the curvature matrix. Moreover, since $h (y_0) = \gamma (y_0)$, and since $y_0$ is the time along $\gamma$, equation \eqref{eq_perturbationmatrix00} follows. 
\end{proof}

\section{Generic Nondegeneracy of Closed Geodesics} \label{section_bumpymetric}

Let $Q_0 \in \V$ and let $M_0 = M(Q_0)$. Recall the definition of the space $\E_{Q_0}$ of embeddings from Section \ref{sec_hypersurfaceperturbations}. Let $G \in \E_{Q_0}$. Then there is $Q \in \V$ such that $G(M_0)=M$ where $M = M(Q)$. Recall that $G$ defines a metric $g = G^*\left( \left. \delta \right|_{M} \right)$ on $M_0$.

Let $x \in M_0$ and $u \in T_x M_0$. Recall the vertical subspace $T^V_u T M_0$ of $T_u T M_0$ is 
\begin{equation}
T^V_u T M_0 = T_u T_x M_0,
\end{equation}
and the horizontal subspace $T^H_u T M_0$ is the orthogonal complement of $T^V_u T M_0$ in $T_u T M_0$ with respect to $g$. Then we have 
\begin{equation}
T_u T M_0 = T^H_u TM_0 \oplus T^V_u T M_0 \simeq T_x M_0 \oplus T_x M_0.
\end{equation}
If $u_1, u_2 \in T_x M_0$, we let $(u_1; u_2) \in T_u T M_0$ denote the vector in the second tangent space defined by this correspondence.

Define
\begin{equation}
\begin{matrix}
\Phi : T M_0 \times \mathbb{R}_{> 0} \times \E_{Q_0} & \longrightarrow & T M_0 \\
\quad \qquad (\theta,t,G) & \longmapsto & \phi^t_G (\theta)
\end{matrix}
\end{equation}
where $\phi^t_G$ is the time-$t$ map of the geodesic flow on $M_0$ with respect to the pullback metric $G^* \delta$. 

The deduction of Theorem \ref{theorem_bumpymetric} from the following lemma is analogous to the deduction of Theorem 1 of \cite{anosov1983generic} from Lemma 2 of \cite{anosov1983generic}, and so we do not repeat the argument here. The following lemma contains the part of the proof where it is required to make analytic perturbations. 
\begin{lemma} \label{lemma_anosov2}
Let $G \in \E_{Q_0}$ and let $g = g(G)$ denote the corresponding Riemannian metric on $M_0$. Let $x \in M_0$, and suppose $u \in T_x M_0$ is such that $g(x)(u,u) = 1$ and $(x,u)$ is a periodic point of $\phi^t_G$ with minimal period $L$. Let $v,w \in T_x M_0$ such that
\begin{equation} \label{eq_anosovlemma2_1}
g(x) (u,v) = g(x) (u,w) = 0.
\end{equation}
Then there is $\bar{G} \in T_G \E_{Q_0}$ such that
\begin{equation} \label{eq_anosovlemma2_2}
D_G \Phi ((x,u), L, G)(\bar{G}) = (v; w)
\end{equation}
where $D_G$ denotes the derivative with respect to $G$.
\end{lemma}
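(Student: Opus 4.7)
By Lemma \ref{lemma_hvsplittingcharacterisation}, the conclusion is equivalent to showing that the linear map $\bar G\mapsto(\bar\gamma(L),\nabla_t\bar\gamma(L))$ from $T_G\E_{Q_0}$ to $T_xM_0\oplus T_xM_0$ has image containing the $2d$-dimensional subspace $u^\perp\oplus u^\perp$. The plan is to identify $\bar G$ with a perturbation $\psi$ of the defining function, as developed in Sections \ref{sec_fermicoordinates} and the following, and to use the variation-of-parameters formula
\[
D_G\Phi((x,u),L,G)(\bar G) = \int_0^L D\phi_G^{L-s}\bigl(\phi_G^s(x,u)\bigr)\,\bar X\bigl(\phi_G^s(x,u)\bigr)\,ds,
\]
where $\bar X = \Omega\nabla\bar H$ is the vector field of the perturbation given by \eqref{eq_fermihamiltonianofperturbation} and \eqref{eq_vfperturbation}. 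Since $L$ is the minimal period, $t\mapsto\phi_G^t(x,u)$ is an embedded simple closed curve in $TM_0$: indeed, any equality $\phi_G^{t_1}(x,u)=\phi_G^{t_2}(x,u)$ with $0\le t_1<t_2<L$ would exhibit $t_2-t_1\in(0,L)$ as a smaller period. Consequently, $\gamma([0,L])$ can be covered by finitely many coordinate patches on each of which Fermi coordinates are defined along a simple arc of $\gamma$, reducing the problem to perturbations $\psi$ locally supported in such patches.

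The main step is a local calculation at a point $\gamma(s_0)$ with $\kappa(\gamma(s_0),\gamma'(s_0))\ne 0$. Such a point exists: any closed curve in $\mathbb{R}^{d+2}$ has nonvanishing curvature somewhere, and by the geodesic equation \eqref{eq_geodesicfloweom} this curvature is precisely $|\kappa|$. Working in Fermi coordinates around $\gamma(s_0)$ and using the formulas for $\bar X$ in terms of $\psi$ and its first-order partial derivatives, I would show that, by choosing $\psi$ supported in a small neighborhood of $\gamma(s_0)$ and independently varying the value $\psi(\gamma(s_0))$ and the derivatives $\partial_{y_k}\psi(\gamma(s_0))$ for $k=0,\ldots,d$, the induced vector $\bar X\bigl(\phi_G^{s_0}(x,u)\bigr)$ ranges over an explicitly identifiable $(d+2)$-dimensional subspace of $T_{\phi_G^{s_0}(x,u)}TM_0$. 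The nonvanishing of $\kappa$ guarantees maximal rank via the appearance of $\tilde C_{00}=\kappa$ in the key components (see \eqref{eq_perturbationmatrix00}). Propagating this subspace forward by the symplectic isomorphism $D\phi_G^{L-s_0}$ contributes an equal-dimensional subspace to the endpoint variation.

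To cover the full $2d$-dimensional subspace $u^\perp\oplus u^\perp$, I would combine two such constructions, one at $\gamma(s_0)$ and one at a second point $\gamma(s_1)$ with $s_1\ne s_0$ in $(0,L)$ (possible by minimality of $L$ and by the fact that $\kappa\ne 0$ on an open subset of $[0,L]$), with disjoint supports. Summing the two contributions and using symplecticity of $D\phi_G^{L-s}$ together with energy preservation (which constrains the image to lie in $\ker dH = \mathrm{span}(X)\oplus(u^\perp\oplus u^\perp)$), I would verify that the two images are in general position inside $\ker dH$, so that their sum projects surjectively onto $u^\perp\oplus u^\perp$. Finally, real-analyticity of $\bar G$ is obtained by approximating the compactly supported bump $\psi$ by an entire function via the Broer--Tangerman method referenced in the introduction, without affecting the first-order conclusion.

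The main obstacle is this final surjectivity argument. Because metric perturbations in the present setting have the restrictive form $\bar g = 2\psi\tilde C$ (Remark \ref{remark_restrictiveperturbation}), rather than being arbitrary symmetric tensors as in the classical Riemannian setting, surjectivity onto $u^\perp\oplus u^\perp$ is not immediate and must be extracted from an explicit linear-algebraic analysis of the contributions from the two chosen points, using the structure of $\tilde C$ along $\gamma$ and the nonvanishing of $\kappa$ to guarantee that the images span the desired transverse subspace.
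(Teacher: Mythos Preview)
Your outline shares the setup with the paper (Fermi coordinates, variation of parameters, the need for a point where $\kappa\neq 0$, and analytic approximation at the end), but the decisive step---surjectivity onto $u^\perp\oplus u^\perp$---is handled quite differently, and in your sketch it remains an acknowledged obstacle rather than a proof.

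The paper avoids the two-point general-position argument entirely. Rather than varying $\psi(\gamma(s_0))$ and $\nabla\psi(\gamma(s_0))$ freely and trying to combine contributions from two base points, it fixes a perturbation of the special form
\[
\psi(y_0,y)=\tilde C_{00}(y_0,y)^{-1}\sum_{j=1}^d y_j\,f_j(y_0),
\]
which vanishes identically along $\gamma$ and has prescribed transverse first derivatives. With this choice $\bar\Gamma_{i00}(y_0,0)=-f_i(y_0)$, so the first-order variation $\bar\gamma=(\bar\gamma_1,\dots,\bar\gamma_d)$ solves the inhomogeneous Jacobi system
\[
\bar\gamma_i''(t)+\sum_{k}R_{i0k0}(t,0)\,\bar\gamma_k(t)=f_i(t),\qquad \bar\gamma(0)=\bar\gamma'(0)=0,
\]
with \emph{arbitrary} forcing $f=(f_1,\dots,f_d)$. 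Writing this as a first-order system with fundamental matrix $U(t)$ and taking $f(t)=\alpha\,\delta(t-t_0)+\beta\,\delta'(t-t_0)$ at a \emph{single} point $t_0$, an integration by parts using $\frac{d}{dt}U^{-1}=-U^{-1}\begin{psmallmatrix}0&I\\ -R&0\end{psmallmatrix}$ yields
\[
\begin{pmatrix}\bar\gamma(L)\\ \bar\gamma'(L)\end{pmatrix}=U(L)\,U^{-1}(t_0)\begin{pmatrix}\beta\\ \alpha\end{pmatrix},
\]
which is surjective because $U(L)U^{-1}(t_0)$ is invertible. One then replaces $\delta$ by a narrow bump and passes to an analytic family.

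Your scheme could probably be rescued: if you set $\psi\equiv 0$ on $\gamma$ (so only ``momentum kicks'' enter), the question becomes whether the vertical subspace propagated from $s_1$ is transverse to the vertical subspace propagated from $s_0$, which holds for all but a discrete set of pairs $(s_0,s_1)$ by the argument behind Proposition~\ref{proposition_paternain}. As written, however, two things are loose. First, allowing $\psi(\gamma(s_0))\neq 0$ means $\gamma$ is no longer a geodesic to first order, which muddies the Jacobi-equation reduction. Second, your $(d{+}2)$-dimensional count includes the parameter $\partial_{y_0}\psi$, but for a compactly supported bump its contribution to the \emph{integrated} variation is not independent of that of $\psi$ itself (integrate by parts in $y_0$); so the effective rank at a single point is smaller than you suggest, and the general-position check genuinely has content. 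The paper's $\delta+\delta'$ device bypasses all of this.
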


Before proceeding to the proof of Lemma \ref{lemma_anosov2}, we first prove a preliminary lemma. Fix $(\theta, L, G) \in S M_0 \times \mathbb{R}_{> 0} \times \E_{Q_0}$, and let $\bar{G} \in T_G \E_{Q_0}$, so there is a real-analytic family $G_{\epsilon} \subset \E_{Q_0}$ with $G_0 \equiv G$ and $\left. \frac{d}{d \epsilon} \right|_{\epsilon = 0} G_{\epsilon} = \bar{G}$. Define the curve 
\begin{equation}
\gamma_{\epsilon} : [0,L + 1] \longrightarrow M_0
\end{equation}
by
\begin{equation}
\gamma_{\epsilon} (t) = \pi \circ \phi^t_{G_{\epsilon}} (\theta) \gamma (t) + \epsilon \bar{\gamma} (t) + O(\epsilon^2).
\end{equation}

\begin{lemma} \label{lemma_hvsplittingcharacterisation}
Let $D_G$ denote the derivative with respect to $G$, and let $\nabla_t = \nabla_{\gamma' (t)}$ denote the covariant time derivative with respect to the Levi-Civita connection of the metric $g=g(G)$. Then we have
\begin{equation}
D_G \Phi (\theta, L, G)(\bar{G}) = (\bar{\gamma}(L); \nabla_t \bar{\gamma}(L)).
\end{equation}
\end{lemma}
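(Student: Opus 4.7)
The plan is to read off the two components of $D_G\Phi(\theta,L,G)(\bar{G})$ under the splitting $T_uTM_0\simeq T_xM_0\oplus T_xM_0$ at $(x,u)=\phi^L_G(\theta)$ that is determined by the Levi-Civita connection of the fixed metric $g=g(G)$. Write
\[
\phi^t_{G_\epsilon}(\theta)=(\gamma_\epsilon(t),\gamma_\epsilon'(t)),
\]
where $\gamma_\epsilon'(t)=\partial_t\gamma_\epsilon(t)$ is the ordinary curve velocity of $\gamma_\epsilon$ at $t$, which makes sense without reference to any metric. The object we must differentiate in $\epsilon$ at $\epsilon=0$ is thus the family $(\gamma_\epsilon(L),\gamma_\epsilon'(L))$ in $TM_0$.

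First I would extract the horizontal component. Since horizontal vectors are mapped isomorphically onto $T_xM_0$ by $d\pi$, where $\pi:TM_0\to M_0$ is the bundle projection, the horizontal part of $\tfrac{d}{d\epsilon}|_{\epsilon=0}\phi^L_{G_\epsilon}(\theta)$ equals
\[
\left.\tfrac{d}{d\epsilon}\right|_{\epsilon=0}\pi\phi^L_{G_\epsilon}(\theta) = \left.\tfrac{d}{d\epsilon}\right|_{\epsilon=0}\gamma_\epsilon(L) = \bar{\gamma}(L),
\]
which is the first entry in the claim.

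For the vertical component, I would use the standard fact that, in the splitting determined by $g$, the vertical projection coincides with the connection map $K:T_uTM_0\to T_xM_0$ that sends the velocity at $\epsilon=0$ of a curve $\epsilon\mapsto u_\epsilon\in T_{x_\epsilon}M_0$ to the covariant derivative $\nabla_\epsilon u_\epsilon|_{\epsilon=0}$ with respect to the Levi-Civita connection of $g$. Applying $K$ to the curve $\epsilon\mapsto\gamma_\epsilon'(L)$, viewed as a vector field over the base curve $\epsilon\mapsto\gamma_\epsilon(L)$, identifies the vertical component with $\nabla_\epsilon\gamma_\epsilon'(L)|_{\epsilon=0}$. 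The symmetry of the torsion-free connection applied to the smooth two-parameter map $(\epsilon,t)\mapsto\gamma_\epsilon(t)$ then gives
\[
\nabla_\epsilon\,\partial_t\gamma_\epsilon = \nabla_t\,\partial_\epsilon\gamma_\epsilon,
\]
and evaluating at $\epsilon=0$, $t=L$ yields $\nabla_\epsilon\gamma_\epsilon'(L)|_{\epsilon=0}=\nabla_t\bar{\gamma}(L)$, as required.

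The step that deserves the most care is the invocation of the torsion-free identity, because the curves $\gamma_\epsilon$ are geodesics for the distinct metrics $g(G_\epsilon)$, while the horizontal/vertical splitting (and hence the connection used to identify the vertical component) is fixed at $g=g(G)$. The identity is nonetheless legitimate: $\partial_t\gamma_\epsilon$ and $\partial_\epsilon\gamma_\epsilon$ are simply vector fields along the smooth map $(\epsilon,t)\mapsto\gamma_\epsilon(t)\in M_0$, and symmetry of the fixed Levi-Civita connection does not require the $t$-curves to be geodesics for $g$ itself. Assembling the two components gives the claimed identity $D_G\Phi(\theta,L,G)(\bar{G})=(\bar{\gamma}(L);\nabla_t\bar{\gamma}(L))$.
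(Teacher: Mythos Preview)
Your argument is correct. Both you and the paper identify the two components of $D_G\Phi(\theta,L,G)(\bar G)$ under the horizontal/vertical splitting of $T_{(x,u)}TM_0$ determined by the Levi--Civita connection of the fixed metric $g=g(G)$; the difference is in how the vertical component is computed. The paper works in a local chart, invokes the coordinate description of the splitting (namely $D\varphi(Z)=(y,z+\Gamma(y)(v,w))$ with Christoffel symbols $\Gamma$), and then checks directly that $(\gamma,\gamma',\bar\gamma,\bar\gamma')$ corresponds to $(\bar\gamma;\nabla_t\bar\gamma)$ by recognising $\bar\gamma'+\Gamma(\gamma)(\gamma',\bar\gamma)$ as the coordinate expression for $\nabla_t\bar\gamma$. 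You instead use the intrinsic characterisation of the vertical projection as the connection map $K$, followed by the torsion-free identity $\nabla_\epsilon\partial_t\gamma_\epsilon=\nabla_t\partial_\epsilon\gamma_\epsilon$ for a two-parameter family. Your route avoids coordinates and Christoffel symbols entirely, and your remark that the symmetry identity requires only a fixed torsion-free connection on $M_0$ (not that the $t$-curves be $g$-geodesics) is exactly the point needed to justify it here. The paper's approach, by contrast, makes the identification completely explicit and may be easier to match against the coordinate formulas quoted from Anosov and Klingenberg.
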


\begin{proof}
Write $\theta = (x,u)$ where $x \in T M_0$ and $u \in T_x M_0$, and let $V \in T_u T M_0$. Let $(U, \varphi)$ be a coordinate chart with $x \in U$ and write
\begin{equation} \label{eq_hvsplittingchart}
\varphi(x) = y, \quad D \varphi (u) = (y,v), \quad D^2 \varphi (V) = (y,v,w,z).
\end{equation}
Let $\Gamma_{ijk} (\cdot) = \Gamma_{ijk} (\cdot, G)$ denote the Christoffel symbols with respect to $g = g(G)$ and the chart $(U, \varphi)$, and let $\Gamma (y)$ denote the $(d+1)$-dimensional vector valued function with components
\begin{equation}
\left(\Gamma (y)(v,w) \right)_i = \sum_{j,k=0}^d \Gamma_{ijk}(y)v_jw_k.
\end{equation}
As pointed out in \cite{anosov1967geodesic, anosov1983generic} (see also \cite{klingenberg1976lectures}) we have $V=(Y; Z)$ where
\begin{equation} \label{eq_hvsplittingcoords1}
D \varphi (Y) = (y,w), \quad D \varphi (Z) = (y, z + \Gamma(y)(v,w)).
\end{equation}
Let us show that
\begin{equation} \label{eq_hvsplittingcoords2}
(\gamma (t), \gamma' (t), \bar{\gamma}(t), \bar{\gamma}' (t)) = (\bar{\gamma}(t); \nabla_t \bar{\gamma}(t)).
\end{equation}
If $(\gamma, \gamma', \bar{\gamma}, \bar{\gamma}') = (Y;Z)$ then \eqref{eq_hvsplittingchart} and the first equation of \eqref{eq_hvsplittingcoords1} imply that $D \varphi (Y) = (\gamma, \bar{\gamma})$,
and so $Y= \bar{\gamma}$. Moreover, \eqref{eq_hvsplittingchart} and the second equation of \eqref{eq_hvsplittingcoords1} give
\begin{equation}
D \varphi (Z) = (\gamma, \bar{\gamma}' + \Gamma (\gamma)(\gamma', \bar{\gamma}))  = (\gamma, \frac{d}{dt} \bar{\gamma} + \Gamma (\gamma) (\gamma', \bar{\gamma}))
\end{equation}
which implies that $Z = \nabla_t \bar{\gamma}$. This proves \eqref{eq_hvsplittingcoords2}. Using \eqref{eq_hvsplittingcoords2} we see that
\begin{align}
D_G \Phi (\theta, L, G)(\bar{G}) ={}& \left. \frac{d}{d \epsilon} \right|_{\epsilon = 0} \Phi (\theta, t, G_{\epsilon}) = \left. \frac{d}{d \epsilon} \right|_{\epsilon = 0} \phi^t_{G_{\epsilon}} (\theta) = \left. \frac{d}{d \epsilon} \right|_{\epsilon = 0} (\gamma_{\epsilon} (t), \gamma_{\epsilon}' (t)) \\
={}& (\gamma (t), \gamma' (t), \bar{\gamma}(t), \bar{\gamma}' (t))  = (\bar{\gamma}(t); \nabla_t \bar{\gamma}(t))
\end{align}
which completes the proof of the lemma.
\end{proof}

\begin{proof}[Proof of Lemma \ref{lemma_anosov2}]
Let $\gamma (t) = \pi \circ \phi^t_G (x,u)$ denote the closed geodesic corresponding to the orbit of $(x,u)$, and let $(y_0, y) = (y_0, y_1, \ldots, y_d)$ denote Fermi coordinates in a neighbourhood of $\gamma$ so that $\gamma (t) = (t, 0, \ldots, 0)$. Now if $v,w \in T_x M_0$ such that \eqref{eq_anosovlemma2_1} holds, then $v = (0, v_1, \ldots, v_d)$ and $w = (0, w_1, \ldots, w_d)$.

Let $Q \in \V$ such that $G(M_0)=M(Q) \equiv M$. Consider a real-analytic family $Q_{\epsilon} \subset \V$ with $Q_0 = Q$. Let $\gamma_{\epsilon}(t)$ denote the perturbed geodesic, and write
\begin{equation}
\gamma_{\epsilon}(t) = \gamma (t) + \epsilon \bar{\gamma}(t) + O(\epsilon^2).
\end{equation}
Let
\begin{equation} \label{eq_bumpymetricqbardef}
\bar{Q} = \left. \frac{d}{d \epsilon} \right|_{\epsilon = 0} Q_{\epsilon} \in T_Q \V.
\end{equation}
By Lemma \ref{lemma_hvsplittingcharacterisation}, if we can show that, by appropriate choice of real-analytic family $Q_{\epsilon}$, any vector $(\bar{\gamma}_1(L), \ldots, \bar{\gamma}_d(L), (\bar{\gamma}_1)'(L), \ldots, (\bar{\gamma}_d)'(L)) \in \mathbb{R}^{2d}$ can be obtained, then the lemma is proved.

Suppose we make a localised perturbation $Q \to Q + \epsilon \psi$, and denote by $\bar{g}$ the term of order $\epsilon$ in the Taylor expansion of the perturbed metric. Then $\bar{\gamma}(t)$ is the solution of the initial value problem
\begin{equation} \label{eq_bumpyivp1}
\frac{d^2}{dt^2} \bar{\gamma}_i (t) + \sum_{k=1}^d R_{i0k0} (t,0) \bar{\gamma}_k (t) + \bar{\Gamma}_{i00} (t,0) = 0, \quad \bar{\gamma}(0) = \frac{d}{dt} \bar{\gamma}(0) = 0
\end{equation}
where $R_{ijkl}$ is the Riemann curvature tensor, and 
\begin{equation} \label{eq_christoffelperturbation}
\bar{\Gamma}_{ijk} = \frac{1}{2} \left( \frac{\partial \bar{g}_{ij}}{\partial y_k} + \frac{\partial \bar{g}_{ik}}{\partial y_j} - \frac{\partial \bar{g}_{jk}}{\partial y_i} \right)
\end{equation}
are the Christoffel symbols corresponding to the perturbation $\bar{g}$ of the metric \cite{anosov1983generic}. 

Since $\gamma$ is a closed geodesic, it is easy to see that we can find a time $t$ for which $\kappa (\gamma (t), \gamma'(t)) \neq 0$ as a result of \eqref{eq_geodesicfloweom}. It follows that we must have $\kappa (\gamma (t), \gamma'(t)) \neq 0$ for all $t$ in some interval $[a,b]$. We assume that the perturbation $\psi$ is supported only when $y_0 \in (a,b)$. It follows from \eqref{eq_perturbationmatrix00} that $\tilde{C}_{00}(y_0,0) = - \kappa (\gamma (y_0), \gamma' (y_0)) \neq 0$ whenever $y_0 \in (a,b)$. Denote by $V_0 \subset \mathbb{R}^{d+1}$ a set consisting of points $(y_0,y)$ such that $y_0 \in (a,b)$ and $y$ is sufficiently small. For $(y_0,y) \in V_0$, define
\begin{equation} \label{eq_bumpymetricperturbation}
\tilde{\psi} (y_0, y) = \tilde{C}_{00}(y_0, y)^{-1} \sum_{j=1}^d y_j f_j (y_0)
\end{equation}
where $f_1, \ldots, f_d$ are functions of $y_0$ that we will choose later. We then define the $C^{\infty}$ perturbation $\psi$ of the hypersurface by $\psi (x) = \tilde{\psi} (h^{-1}(x))$ whenever $x \in h(V_0)$, and such that $\psi$ vanishes identically outside a small neighbourhood of $h(V_0)$ in $\mathbb{R}^{d+2}$. Combining \eqref{eq_metricpertcomps}, \eqref{eq_christoffelperturbation}, and \eqref{eq_bumpymetricperturbation}, we see that
\begin{equation}
\bar{\Gamma}_{000} (y_0,0) = \frac{1}{2} \frac{\bar{g}_{00}}{\partial y_0} (y_0,0) = 0, \quad \bar{\Gamma}_{i00} (y_0,0) = -f_i (y_0)
\end{equation}
for $i = 1, \ldots, d$. 

Since $\bar{g} \equiv 0$ along $\gamma$, we have $\bar{\gamma}_0 = 0$. Therefore we need only consider $\bar{\gamma}=(\bar{\gamma}_1, \ldots, \bar{\gamma}_d)$. Write
\begin{equation}
f(t) = (f_1(t), \ldots, f_d(t)), \quad R(t) = (R_{i0k0}(t,0))_{i,k=1, \ldots, d}.
\end{equation}
The initial value problem \eqref{eq_bumpyivp1} can be written as
\begin{equation}
\frac{d}{dt} \left(
\begin{matrix}
\bar{\gamma} (t) \\
\bar{\gamma}' (t)
\end{matrix}
\right) = \left(
\begin{matrix}
0 & I_d \\
-R(t) & 0
\end{matrix}
\right) \left(
\begin{matrix}
\bar{\gamma} (t) \\
\bar{\gamma}' (t)
\end{matrix}
\right) + \left(
\begin{matrix}
0 \\ f(t)
\end{matrix}
\right), \quad \left(
\begin{matrix}
\bar{\gamma}(0) \\
\bar{\gamma}' (0)
\end{matrix}
\right) = \left(
\begin{matrix} 0 \\ 0 \end{matrix}
\right).
\end{equation}
Let $U(t)$ denote the fundamental matrix solution of this initial value problem, so we have
\begin{equation} \label{eq_fundamentalmatrix}
\frac{d}{dt} U(t) = \left(
\begin{matrix}
0 & I_d \\
-R(t) & 0
\end{matrix}
\right) U(t), \quad U(0) = I_{2d}.
\end{equation}
Then $U(t)$ is a $2d \times 2d$ invertible matrix, and upon differentiating the identity $I_{2d} = U^{-1}(t) U(t)$ we see that its inverse satisfies
\begin{equation} \label{eq_fundamentalmatrixinverse}
\frac{d}{dt} U^{-1}(t) = -U^{-1}(t) \left(
\begin{matrix}
0 & I_d \\
-R(t) & 0
\end{matrix}
\right).
\end{equation}
Choose some $t_0 \in (a,b)$ such that $\gamma (t_0)$ is not a point of self-intersection of $\gamma$. Let $\delta (t - t_0)$ denote the Dirac delta function, a generalised function that can be thought of heuristically as taking the `value' $\infty$ at $t = t_0$ and 0 elsewhere. Suppose
\begin{equation}
f(t) = \alpha \delta (t-t_0) + \beta \delta' (t-t_0)
\end{equation}
where $\alpha, \beta \in \mathbb{R}^d$. Then the equation for variation of parameters together with properties of the Dirac delta function and equation \eqref{eq_fundamentalmatrixinverse} implies
\begin{align}
\left( \begin{matrix} \bar{\gamma} (L) \\ \bar{\gamma}' (L) \end{matrix} \right) ={}& U(L) \int_0^L U^{-1}(t) \left( \begin{matrix} 0 \\ f(t) \end{matrix} \right) dt = U(L) \int_{t_0}^L U^{-1}(t) \left( \begin{matrix} 0 \\ f(t) \end{matrix} \right) dt \\
={}& U(L) U^{-1}(t_0) \left( \begin{matrix} 0 \\ \alpha \end{matrix} \right) - U(L) \frac{d}{dt} U^{-1}(t_0) \left( \begin{matrix} 0 \\ \beta \end{matrix} \right) \\
={}& U(L)U^{-1}(t_0) \left( \begin{matrix} 0 \\ \alpha \end{matrix} \right) + U(L) U^{-1}(t_0) \left(
\begin{matrix} 0 & I_d \\ -R(t) & 0 \end{matrix}
\right) \left( \begin{matrix} 0 \\ \beta \end{matrix} \right) \\
={}& U(L) U^{-1}(t_0) \left( \begin{matrix} \beta \\ \alpha \end{matrix} \right).
\end{align}
Therefore, by appropriate choice of $\alpha, \beta$ we can obtain any vectors $\bar{\gamma}(L)$, $\bar{\gamma}'(L) \in \mathbb{R}^d$. However, $\delta$ is not a function, so instead choose some bump function $\varphi : \mathbb{R} \to \mathbb{R}$ such that
\begin{equation}
\textrm{supp}(\varphi) \subseteq (-1,1), \quad \int_{- \infty}^{\infty} \varphi (t) dt = 1.
\end{equation}
For $\epsilon_0 >0$ define $\varphi_{\epsilon_0}(t) = \frac{1}{\epsilon_0} \varphi \left( \frac{t}{\epsilon_0} \right)$. Then we have
\begin{equation}
\textrm{supp} (\varphi_{\epsilon_0}) \subseteq (-\epsilon_0, \epsilon_0), \quad \int_{- \infty}^{\infty} \varphi_{\epsilon_0} (t) dt = 1.
\end{equation}
Then $\varphi_{\epsilon_0} \to \delta$ as $\epsilon_0 \to 0$. For sufficiently small $\epsilon_0 >0$, let $f(t) = \alpha \varphi_{\epsilon_0} (t-t_0) + \beta \varphi_{\epsilon_0}'(t-t_0)$. Then we can still obtain any vectors by varying $\alpha, \beta$. Finally, approximate $Q + \epsilon \psi$ by a one-parameter real-analytic family $Q_{\epsilon}$ such that the term $\bar{Q}$ of order $\epsilon$ in the expansion of $Q_{\epsilon}$ (see \eqref{eq_bumpymetricqbardef}) is sufficiently close to $\psi$ along $\gamma$, along with its first derivative. Now, we can find a real-analytic family $G_{\epsilon} \subset \E_{Q_0}$ with the property that $G_{\epsilon} (M_0) = M(Q_{\epsilon})$ for all $\epsilon$. Let 
\begin{equation}
\bar{G} = \left. \frac{d}{d \epsilon} \right|_{\epsilon = 0} G_{\epsilon}.
\end{equation}
Then $\bar{G} \in T_G \E_{Q_0}$ satisfies \eqref{eq_anosovlemma2_2}.
\end{proof}

\section{Generic Properties of $k$-Jets of Poincar\'e Maps} \label{section_klingenbergtakens}

Let $Q \in \V$, and denote by $M=M(Q)$ the corresponding closed $C^{\omega}$-smooth hypersurface of $\mathbb{R}^{d+2}$ equipped with the Euclidean metric. Let $\gamma : [0,1] \to M$ denote a nonconstant geodesic segment, and denote by $l = (\gamma, \gamma')$ the corresponding orbit segment of the geodesic flow. Let $\Sigma_0, \Sigma_1$ denote transverse sections to $l$ in $TM$ at $l(0), l(1)$ respectively, restricted to the energy level of $l$. Let $P_{Q,1}: \Sigma_0 \to \Sigma_1$ denote the corresponding Poincar\'e map. 

We assume moreover that the normal curvature $\kappa (\gamma (t), \gamma'(t))$ is not identically zero, and $\gamma (0)$ is not a point of self-intersection of $\gamma$. It follows from \eqref{eq_geodesicfloweom} that we can find $0<a<b<c<1$ and a sufficiently small cylindrical neighbourhood $W$ of the curve segment $\gamma([0,c])$ such that:
\begin{itemize}
\item
$\kappa (\gamma (t), \gamma'(t)) \neq 0$ for all $t \in [a,b]$; 
\item
$\gamma ([0,1]) \cap W = \gamma ([0,c])$; and
\item
$W$ is contained in a neighbourhood of $\gamma$ in which Fermi coordinates are valid, and the Fermi coordinates define a chart in $W$.
\end{itemize}
Notice that the third point implies that $\gamma$ has no self-intersections in $W$. Let $k \in \mathbb{N}$ and $\J \subset J^k_s (d)$ be open, dense, and invariant.

\begin{theorem} \label{theorem_ktlocallysupportedperturbations}
There are $n_0 \in \mathbb{N}$ and $C^{\infty}$-smooth locally-supported functions $\psi_j : \mathbb{R}^{d+2} \to \mathbb{R}$ for $j=1, \ldots, n_0$ such that:
\begin{enumerate}
\item
For each $j$, we have $\gamma(0), \gamma(1) \notin \textrm{supp}(\psi_j)$;
\item
For each $j$, we have $\textrm{supp}(\psi_j) \cap M \subset W$;
\item
$\gamma$ is still a geodesic on the perturbed manifold
\begin{equation}
M_{\epsilon} = M(Q + \psi_{\epsilon})
\end{equation}
where
\begin{equation}
\psi_{\epsilon} = \epsilon_1 \psi_1 + \cdots + \epsilon_{n_0} \psi_{n_0}
\end{equation}
for all sufficiently small values of $\epsilon = (\epsilon_1, \ldots,\epsilon_{n_0})$, in the sense that $\phi^t_{M_{\epsilon}}(\gamma(0), \gamma'(0)) = (\gamma(t), \gamma'(t))$ where $\phi^t_{M_{\epsilon}}$ denotes the geodesic flow on $M_{\epsilon}$ with respect to the Euclidean metric; and
\item
There are arbitrarily small values of the parameters $\epsilon = (\epsilon_1, \ldots,\epsilon_{n_0})$ for which the $k$-jet $J^k_{l(0)} P_{Q+ \psi_{\epsilon},1}$ of the Poincar\'e map of $\gamma$ on the perturbed manifold $M_{\epsilon}$ lies in the set $\J$.
\end{enumerate}
\end{theorem}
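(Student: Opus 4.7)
My plan is to adapt the Klingenberg--Takens strategy~\cite{klingenberg1972generic}, in the form used by Stojanov--Takens~\cite{stojanov1993generic} for the restricted class of metric perturbations arising from perturbations of the defining function $Q$. First I reduce to a local question: pick $t_{*}\in(a,b)$, and insert auxiliary transverse sections $\Sigma_{a},\Sigma_{b}$ at $l(a),l(b)$, so that $P_{Q,1}=P_{b\to 1}\circ P_{a\to b}\circ P_{0\to a}$. I arrange all perturbations $\psi_j$ to be supported in a small neighborhood of $\gamma(t_{*})$ inside $W$, disjoint from $\gamma(0)$ and $\gamma(1)$; this makes the outer two factors of the Poincar\'e map independent of $\epsilon$ and gives conditions (1) and (2). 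Writing $\sigma_1=J^k_{l(0)}P_{0\to a}$ and $\sigma_2=J^k_{l(b)}P_{b\to 1}$ and using the composition rule, the goal (4) reduces to $J^k_{l(a)}P^{\epsilon}_{a\to b}\in\J':=\{\tau\in J^k_s(d):\sigma_1\cdot\tau\cdot\sigma_2\in\J\}$, which is open and dense because left and right jet multiplication are homeomorphisms.

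Second, I work in Fermi coordinates around $\gamma$ and take $\psi_j(y_0,y)=\chi(y_0,y)\,p_j(y_0-t_{*},y)$, where $\chi$ is a fixed bump function supported where the Fermi chart is valid, and the $p_j$ are polynomials satisfying $p_j(\cdot,0)\equiv 0$ and $\partial_{y_i}p_j(\cdot,0)\equiv 0$ for $i=1,\ldots,d$. With these conditions $\bar g=2\psi\tilde C$ and its first partials vanish along $\gamma$, so the first-order Christoffel variations $\bar\Gamma^i_{00}(t,0)$ computed from \eqref{eq_christoffelperturbation} vanish, $\gamma$ remains a geodesic with the same parameterization, and condition (3) holds. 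The space of such polynomials of degree at most some $K$ is finite-dimensional, and a basis $\{p_j\}_{j=1}^{n_0}$ will be our candidate family.

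It remains to prove that for $K$ large enough, the smooth map $\Psi:\mathbb{R}^{n_0}\to J^k_s(d)$, $\epsilon\mapsto J^k_{l(a)}P^{\psi_\epsilon}_{a\to b}$, is a submersion at $\epsilon=0$. Granting this, $\mathrm{Image}(\Psi)$ contains a neighborhood of the unperturbed $k$-jet, which then meets the dense set $\J'$ at arbitrarily small parameter values, yielding (4). The differential $D\Psi(0)$ is computed from the perturbation Hamiltonian $\bar H=\psi\sum_{i,j}\tilde C_{ij}v_iv_j$ of \eqref{eq_fermihamiltonianofperturbation}: it depends linearly on the Taylor coefficients of $\psi$ at $(t_{*},0)$ up to order roughly $k+2$, through coefficients obtained by integrating partial derivatives of $\bar H$ along the reference orbit against the unperturbed variational flow.

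The submersion step is the main obstacle. The difficulty, emphasized in Remark~\ref{remark_restrictiveperturbation}, is that $\bar H$ is constrained to the proper subspace of polynomial Hamiltonians of the form (scalar function of $y$) times (the fixed quadratic form $\sum\tilde C_{ij}v_iv_j$), so the classical Klingenberg--Takens computation, which uses unrestricted polynomial perturbations of the Hamiltonian, does not apply directly. The saving feature is that $\tilde C_{00}(t_{*},0)=\kappa(\gamma(t_{*}),\gamma'(t_{*}))\neq 0$, by \eqref{eq_perturbationmatrix00} and the choice of $t_{*}\in(a,b)$: the leading $v_0^2$-coefficient of the quadratic form is nonzero, so the $y$-derivatives of $\psi$ at $(t_{*},0)$ beyond the second-order vanishing constraint are faithfully transmitted into $y$-derivatives of $\bar H$ at the corresponding order. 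Combined with the additional freedom obtained by allowing the coefficients of the $p_j$ to be polynomials in $y_0-t_{*}$, so that the perturbation profile along $\gamma$ can be independently adjusted, an explicit algebraic argument parallel to Stojanov--Takens~\cite{stojanov1993generic} shows that the induced variations of $J^k P^\epsilon_{a\to b}$ span $T_{\tau^0}J^k_s(d)$. Taking the resulting finite collection of $p_j$ as our basis completes the proof.
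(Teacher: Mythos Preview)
Your overall strategy coincides with the paper's: reduce to Fermi coordinates, use perturbations of $Q$ that vanish to high enough order along $\gamma$ so that $\gamma$ is preserved, compute the first variation of the Poincar\'e $k$-jet via the perturbation Hamiltonian $\bar H=\psi\sum_{ij}\tilde C_{ij}v_iv_j$, and conclude by a submersion argument. The reduction to $\J'=\sigma_2^{-1}\J\sigma_1^{-1}$ is fine and equivalent to the paper's use of $R_{Q,\epsilon\psi,1}=P_{Q,1}^{-1}\circ P_{Q+\epsilon\psi,1}$.

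The gap is precisely at the step you flag as ``the main obstacle''. Two points are not addressed and are the heart of the paper's argument.

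First, as the paper's Proposition~\ref{proposition_kjetperturbationeffect} makes explicit, the $(k+1)$-jet of the restricted perturbation Hamiltonian on the section is $\kappa(\gamma(t),\gamma'(t))\,\alpha(t)\,J^{k+1}_0\beta$, a homogeneous polynomial in $y$ \emph{only}. The $v$-dependence needed to span $T_{\tau^0}J^k_s(d)\cong\mathbb{R}_{k+1}[y,v]$ arises solely through pulling back by $D_{l(0)}P_{Q,t}$ at \emph{several different} times. Your framing with a single $t_*$ and ``support in a small neighbourhood of $\gamma(t_*)$'' cannot work: over a short $t$-interval the linearised Poincar\'e maps barely move, and the span of $\{f\circ D_{l(0)}P_{Q,t}:f\in\mathbb{R}_{k+1}[y],\ t\approx t_*\}$ has dimension at most $\dim\mathbb{R}_{k+1}[y]<\dim\mathbb{R}_{k+1}[y,v]$. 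The paper instead places delta-like bumps at $N=\dim\mathbb{R}_{k+1}[y,v]$ distinct times $a<t_1<\cdots<t_N<b$ (Lemma~\ref{lemma_imageopeninkernel}).

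Second, even with bumps at many times, one must know that $\{f_n\circ D_{l(0)}P_{Q,t_n}\}$ can be made a basis of $\mathbb{R}_{k+1}[y,v]$ for suitable $f_n\in\mathbb{R}_{k+1}[y]$. This is the paper's notion of $(k+1)$-\emph{generality} of the family $\{D_{l(0)}P_{Q,t}\}$; it is open and dense in $\mathrm{Sp}(2d,\mathbb{R})^N$ (Proposition~\ref{proposition_gkisdense}) but may fail for the given $Q$. The paper therefore first makes a preliminary perturbation in $\P^1$ (Proposition~\ref{proposition_densekgenerality}) to force the family into $(m+1)$-general position for every $m\le k$, and only then runs the submersion argument level by level using $\P^m$ (Lemma~\ref{lemma_imageopeninkernel}). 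Your appeal to ``an explicit algebraic argument parallel to Stojanov--Takens'' is exactly this two-step mechanism, but it needs to be carried out, not cited: without the $(k+1)$-generality hypothesis (or the preliminary perturbation that secures it) the submersion claim is unjustified.
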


This section is devoted to the proof of Theorem \ref{theorem_ktlocallysupportedperturbations}. Theorem \ref{theorem_ktlocal} follows immediately from Theorem \ref{theorem_ktlocallysupportedperturbations}, since we can approximate $Q+ \psi_{\epsilon}$ sufficiently well by a real-analytic family $Q_{\epsilon} = Q_{(\epsilon_1, \ldots, \epsilon_{n_0})}$ where $Q_0 \equiv Q$. Then we can find arbitrarily small values of the $n_0$ parameters $\epsilon$ for which the $k$-jet of the Poincar\'e map of the perturbed geodesic $\tilde{\gamma}$ on the manifold $M(Q_{\epsilon})$ lies in $\J$.

\subsection{Effect of a Perturbation on the $k$-Jet} \label{sec_effectpertkjet}

Let $(y_0,y)$ denote Fermi coordinates adapted to $\gamma$, and let $(v_0,v)$ denote the corresponding tangent coordinates, as described in Section \ref{sec_fermicoordinates}. Therefore \eqref{eq_geodesicinfermicoords} is the equation of $l$. For each $t \in [0,1]$ let
\begin{equation}
\Sigma (t) = \{ (y_0,y,v_0,v) : y_0 = t \}.
\end{equation}
Then $\Sigma (t)$ is a smooth family of transverse sections to $l$ in $TM$. Suppose now we make a perturbation as in \eqref{eq_perturbation} with $\psi$ supported near $\gamma$. Let
\begin{equation} \label{eq_poincaremap}
P_{Q+ \epsilon \psi, t} : \Sigma (0) \longrightarrow \Sigma (t)
\end{equation}
denote the Poincar\'e map (in the above Fermi coordinates) corresponding to the geodesic flow on $M(Q + \epsilon \psi)$ (i.e. $P_{Q,t}$ denotes the unperturbed Poincar\'e map from $\Sigma (0)$ to $\Sigma (t)$). Define the map
\begin{equation} \label{eq_perturbationfunction}
R_{Q, \epsilon \psi, t} : \Sigma (0) \longrightarrow \Sigma (0)
\end{equation}
by
\begin{equation} \label{eq_functionofthejetperturbation}
R_{Q, \epsilon \psi, t} = P^{-1}_{Q,t} \circ P_{Q + \epsilon \psi, t}.
\end{equation}
\begin{remark}
The maps $P_{Q + \epsilon \psi, t}$ and $R_{Q, \epsilon \psi, t}$ may not be defined on all of $\Sigma (0)$, so we restrict them to a neighbourhood of the point $l(0)$ in $\Sigma (0)$ where they are defined, and keep the notation as in \eqref{eq_poincaremap} and \eqref{eq_perturbationfunction} for convenience.
\end{remark}
Let $\bar{X}$ denote the vector field of the perturbation, and assume:
\begin{enumerate}[(i)]
\item
$J^{k-1}_{l(t)} \bar{X} = 0$ for all $t \in [0,1]$;
\item
$l(0), l(1) \notin \textrm{supp} \left( \bar{X} \right)$; and
\item
$\left. \bar{X} \right|_{\Sigma (t)}$ is tangent to $\Sigma (t)$ for all $t \in [0,1]$.
\end{enumerate}
Consider the nonautonomous vector field
\begin{equation} \label{eq_nonatuonomousvectorfield}
\bar{X}_t = P_{Q,t}^* \left( \bar{X} |_{\Sigma (t)} \right)
\end{equation}
defined as the pullback to $\Sigma (0)$ under the unperturbed Poincar\'e map $P_{Q,t}$ of the restriction of $\bar{X}$ to the transverse section $\Sigma (t)$. Let $\phi^t_{\epsilon \bar{X}_t}$ denote the time-$t$ shift along orbits of the flow of $\epsilon \bar{X}_t$. The following result relating the $k$-jet of this flow with the $k$-jet of $R_{Q, \epsilon \psi, t}$ is analogous to Proposition 2.1 in \cite{klingenberg1972generic}.

\begin{proposition} \label{proposition_perturbationfunctionkjet}
$J^k_{l(0)} R_{Q, \epsilon \psi, t} = J^k_{l(0)} \phi^t_{\epsilon \bar{X}_t} + O(\epsilon^2)$ for each $t \in [0,1]$.
\end{proposition}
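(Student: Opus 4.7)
The strategy is to expand the perturbed flow to first order in $\epsilon$ via Duhamel's variation-of-constants formula, extract from this the corresponding expansion of $P_{Q+\epsilon\psi,t}$, and match what remains after left-composition with $P_{Q,t}^{-1}$ with the first-order expansion of the flow of $\epsilon\bar{X}_t$ on $\Sigma(0)$. The whole comparison is carried out at the level of $k$-jets at $l(0)$, with $O(\epsilon^2)$ remainders controlled in $C^{k+1}$-norm on a fixed neighbourhood of $l(0)$.

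First I would let $\Phi^s$ and $\Phi^s_\epsilon$ denote the time-$s$ maps of the vector fields $X$ and $X+\epsilon\bar{X}$ in Fermi coordinates. Smooth dependence on parameters, together with the compact support provided by assumption (ii), yields
\begin{equation}
\Phi^s_\epsilon(\theta)=\Phi^s(\theta)+\epsilon\int_0^s D\Phi^{s-u}(\Phi^u(\theta))\,\bar{X}(\Phi^u(\theta))\,du+O(\epsilon^2)
\end{equation}
uniformly in $C^{k+1}$ near $l(0)$. Assumption (iii) forces the $y_0$-component of $\bar{X}$ to vanish on each section $\Sigma(t)$, and hence the crossing time of the perturbed orbit through $\theta_0$ with $\Sigma(t)$ differs from the unperturbed crossing time only by $O(\epsilon^2)$ in the same $C^{k+1}$-sense. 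Projecting the displayed expansion onto $\Sigma(t)$ then gives
\begin{equation}
P_{Q+\epsilon\psi,t}(\theta_0)-P_{Q,t}(\theta_0)=\epsilon\int_0^t D\Phi^{t-u}(\Phi^u(\theta_0))\,\bar{X}(\Phi^u(\theta_0))\,du+O(\epsilon^2).
\end{equation}

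Next I would factor $P_{Q,t}=P_{Q,u,t}\circ P_{Q,u}$, where $P_{Q,u,t}:\Sigma(u)\to\Sigma(t)$ is the intermediate unperturbed Poincar\'e map whose differential, restricted to $\Sigma(u)$-tangent directions, coincides with that of $\Phi^{t-u}$. Left-composing the previous display with $P_{Q,t}^{-1}$ then turns the integrand into $(P_{Q,u})^*(\bar{X}|_{\Sigma(u)})=\bar{X}_u$, so that
\begin{equation}
R_{Q,\epsilon\psi,t}(\theta_0)=\theta_0+\epsilon\int_0^t\bar{X}_u(\theta_0)\,du+O(\epsilon^2).
\end{equation}
Applying the same variation-of-constants argument directly to the nonautonomous field $\epsilon\bar{X}_t$ on $\Sigma(0)$ yields the identical first-order expansion for $\phi^t_{\epsilon\bar{X}_t}(\theta_0)$, and the proposition then follows by taking $k$-jets at $l(0)$.

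The main obstacle will be controlling the $O(\epsilon^2)$ remainders in $C^{k+1}$-norm on a neighbourhood of $l(0)$, rather than only pointwise, so that the $k$-jet at $l(0)$ of the difference of the two sides is genuinely $O(\epsilon^2)$. This is where assumption (i), $J^{k-1}_{l(t)}\bar{X}=0$, becomes essential: a careful bookkeeping of the Picard iterates for $\Phi^s_\epsilon$ shows that every term involving two or more applications of $\bar{X}$ along $l$ vanishes to sufficiently high order at $l(0)$ that, after multiplication by the appropriate power of $\epsilon$, its contribution to $J^k_{l(0)}$ is of order $O(\epsilon^2)$. Combined with assumptions (ii) and (iii), this justifies the uniform control of the remainders and completes the identification of the two $k$-jets modulo $O(\epsilon^2)$.
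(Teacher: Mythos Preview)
The paper does not prove this proposition; it simply states that it is analogous to Proposition~2.1 of Klingenberg--Takens \cite{klingenberg1972generic}. Your variation-of-constants strategy is exactly the right framework, and indeed is essentially the Klingenberg--Takens argument. However, you have misidentified where assumption~(i) is doing the work, and this leads to a genuine gap.

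Your displayed identity
\[
R_{Q,\epsilon\psi,t}(\theta_0)=\theta_0+\epsilon\int_0^t\bar{X}_u(\theta_0)\,du+O(\epsilon^2)
\]
is \emph{not} a pointwise identity. Away from $l(0)$ the flow $\Phi^u(\theta_0)$ does not land on $\Sigma(u)$ (the crossing time is not $u$), so $\Phi^u(\theta_0)\neq P_{Q,u}(\theta_0)$, and likewise $D\Phi^{t-u}$ restricted to section-tangent directions does not coincide with $DP_{Q,u,t}$ at $\Phi^u(\theta_0)$. Consequently the step ``left-composing with $P_{Q,t}^{-1}$ turns the integrand into $\bar{X}_u$'' fails pointwise. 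It is only true at the level of $k$-jets at $l(0)$, and this is precisely where assumption~(i) is essential: because $J^{k-1}_{l(t)}\bar{X}=0$, the vector field $\bar{X}$ is $O(|\cdot-l|^k)$, so the $k$-jet of $\bar{X}\circ P_{Q,u}$ at $l(0)$ depends only on the $1$-jet $DP_{Q,u}(l(0))$, and all the discrepancies between $\Phi^u$ and $P_{Q,u}$, and between $D\Phi^{t-u}$ and $DP_{Q,u,t}$, contribute only to order $k+1$. The same applies to your crossing-time claim: assumption~(iii) alone does not force the crossing time to be unperturbed to $O(\epsilon^2)$ (the $v_0$-component of $\bar{X}$ need not vanish), but together with~(i) the first-order change in the crossing time is $O(\epsilon|\theta_0-l(0)|^k)$, which is invisible at the $k$-jet level.

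By contrast, your stated use of~(i) for the $O(\epsilon^2)$ remainder is unnecessary: everything in sight depends smoothly on $(\theta_0,\epsilon)$, so the $\epsilon^2$-remainder is automatically $O(\epsilon^2)$ in any $C^{k+1}$ norm on a fixed neighbourhood, with no need to invoke vanishing jets of $\bar{X}$. In short, move your appeal to~(i) from the remainder estimate to the first-order identification, and carry out the comparison at the $k$-jet level from the start rather than pointwise; then the argument goes through.
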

For each $t \in [0,1]$ define
\begin{equation} \label{eq_transversesectiontildedef}
\tilde{\Sigma}(t) = \{ (y_0,y,v_0,v) : y_0 = t, v_0 = 1 \}.
\end{equation}
Clearly $\tilde{\Sigma}(t) \subset \Sigma (t)$. However, the manifold $\tilde{\Sigma} (0)$ is not necessarily invariant under $R_{Q, \epsilon \psi, t}$. The following lemma was proved in \cite{klingenberg1972generic} (Lemma 3.1 and Remark 2.3).
\begin{lemma} \label{lemma_ktremarks}
$P^{-1}_{Q,t}(\tilde{\Sigma}(t))$ and $\Sigma (0)$ are tangent to order $k$ at $l(0)$. Consequently, $\bar{X}_t$ is tangent to $\tilde{\Sigma}(0)$ with respect to $k$-jets, and so $\tilde{\Sigma}(0)$ and $R_{Q, \epsilon \psi, t} (\tilde{ \Sigma}(0))$ have a tangency of order $k$ at $l(0)$.
\end{lemma}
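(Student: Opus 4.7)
The plan is to address the three assertions in sequence, treating the first as the main content (the second and third being consequences). I read the first assertion as saying that $P^{-1}_{Q,t}(\tilde{\Sigma}(t))$ and $\tilde{\Sigma}(0)$, both $2d$-dimensional submanifolds of $\Sigma(0)$ through $l(0)$, have $k$-th order contact at $l(0)$. To establish this I would exploit that the unperturbed flow preserves the Hamiltonian $H$, so that $P_{Q,t}$ takes $\{H = \tfrac{1}{2}\} \cap \Sigma(0)$ onto $\{H = \tfrac{1}{2}\} \cap \Sigma(t)$. In Fermi coordinates, $g_{ij}(y_0,0) = \delta_{ij}$ and all first derivatives of the metric vanish along $\gamma$, so $\dot v_0 = -\partial_{y_0} H$ is of order $|y|^2|v|^2$ near $\gamma$; a careful power-series analysis of the flow, tracking how $v_0$ transports along geodesics close to $l$, should yield the required tangency. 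This is essentially the content of Lemma 3.1 and Remark 2.3 of \cite{klingenberg1972generic}, whose argument transfers verbatim to the present Fermi-coordinate framework.

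Granted the first assertion, the second follows because $\bar{X}_t = P^*_{Q,t}(\bar{X}|_{\Sigma(t)})$ is the pullback of a vector field tangent to $\Sigma(t)$ by assumption (iii), whose $(k-1)$-jet along $l$ vanishes by assumption (i). The first assertion forces the $v_0$-component of $J^k_{l(0)} \bar{X}_t$ to vanish on $T_{l(0)}\tilde{\Sigma}(0)$, which is precisely the statement that $\bar{X}_t$ is tangent to $\tilde{\Sigma}(0)$ with respect to $k$-jets. The third assertion is then immediate from Proposition \ref{proposition_perturbationfunctionkjet}: since $\bar{X}_t$ is tangent to $\tilde{\Sigma}(0)$ to $k$-jets, its time-$t$ flow preserves $\tilde{\Sigma}(0)$ to $k$-jets, and the proposition identifies $J^k_{l(0)} R_{Q,\epsilon\psi,t}$ with $J^k_{l(0)} \phi^t_{\epsilon \bar{X}_t}$ modulo $O(\epsilon^2)$, giving the desired $k$-th order tangency between $\tilde{\Sigma}(0)$ and its image.

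The hard part is the first assertion. A direct Taylor expansion of $H$ around $l(t)$ only gives first-order contact between $\tilde{\Sigma}(t)$ and the energy level $\{H = \tfrac{1}{2}\}$, since the two hypersurfaces differ by $-\tfrac{1}{2}|v|^2 + O(|y|^2)$ at second order. Propagating this tangency from first order to arbitrary order $k$ requires the full higher-order structure of Fermi coordinates—in particular, the fact that all $y_0$-derivatives of $g_{ij}$ vanish along $\gamma$—together with a systematic jet calculation that tracks how the Hamiltonian flow interacts with the foliation by $\{v_0 = \text{const}\}$. My plan is to reduce to the Klingenberg-Takens argument by observing that the setup in Fermi coordinates is formally identical to theirs; a self-contained proof would require expanding $\phi^t_Q$ as a formal power series in the transverse coordinates and verifying inductively that the coefficients of the expansion in the $v_0$-direction vanish to order $k$, which is the main technical obstacle.
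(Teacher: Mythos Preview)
Your proposal is correct and takes essentially the same approach as the paper: the paper gives no proof of this lemma at all, simply citing Lemma 3.1 and Remark 2.3 of \cite{klingenberg1972generic} and noting that the argument carries over. Your reading of the first assertion (replacing $\Sigma(0)$ by $\tilde{\Sigma}(0)$) is the intended one---as stated, $P^{-1}_{Q,t}(\tilde{\Sigma}(t)) \subset \Sigma(0)$, so the literal statement is vacuous; the subsequent use of the lemma confirms that comparison with $\tilde{\Sigma}(0)$ is what is meant.
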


In what follows, we define a perturbation $\psi$ of the function $Q$ in terms of the Fermi coordinates $(y_0,y)$ near the curve $(t,0)$. From this we can define a function $\tilde{\psi} (x) = \psi (h^{-1} (y_0,y))$ for points $x$ on $M$ (where $h$ is defined by \eqref{eq_fermichartmap}), and extend it smoothly to a neighbourhood of these points, so that the perturbation is $C^{\infty}$ and locally supported. Define the perturbation space $\P^k$ as the space of $C^{\infty}$-smooth functions $\psi : \mathbb{R}^{d+2} \to \mathbb{R}$ such that $M(Q + \epsilon \psi)$ is a $C^{\infty}$-smooth closed hypersurface of $\mathbb{R}^{d+2}$ for all sufficiently smooth values of $\epsilon$, and such that, near $\gamma$, $\psi$ takes the form $\psi (y_0, y) = - \alpha (y_0) \beta (y)$ such that:
\begin{itemize}
\item
$\alpha$ is a real-valued function in $y_0$ with $\textrm{supp} (\alpha) \subset (a,b);$
\item
$\beta$ is a real-valued function in $y = (y_1, \ldots, y_d)$ with $J^k_{0} \beta = 0$ for each $t \in [0,1]$, and $\beta$ is supported sufficiently close to the origin so that $\textrm{supp}(\psi) \cap M \subset W.$
\end{itemize}

Notice that the restriction of the symplectic form $\omega$ to $\tilde{\Sigma}(t)$ is a symplectic form, and so it makes sense to discuss Hamiltonian functions and Hamiltonian vector fields on $\tilde{\Sigma}(t)$.

\begin{proposition} \label{proposition_kjetperturbationeffect}
Let $\psi \in \P^k$ where $\psi (y_0,y) = -\alpha (y_0) \beta (y)$. Then $J^k_{l(0)} R_{Q, \epsilon \psi, t}$ is equal (up to terms of order $\epsilon^2$) to the $k$-jet at $l(0)$ of the time-$t$ shift along orbits of the flow of the nonautonomous Hamiltonian function
\begin{equation}
\epsilon \left[ \tilde{H}_t \right] \circ P_{Q,t} : \tilde{\Sigma}(0) \longrightarrow \mathbb{R}
\end{equation}
where 
\begin{equation}
\left[ \tilde{H}_t \right] (y) = \kappa (\gamma (t), \gamma'(t)) \alpha (t)  \beta(y)
\end{equation}
is a one-parameter family (with parameter $t \in [0,1]$, but supported only when $t \in [a,b]$) of homogeneous polynomials of degree $k+1$ in $y = (y_1, \ldots, y_d)$ that is entirely determined by our choice of $\psi \in \P^k$ and the normal curvature along $\gamma$. Moreover, any one-parameter family of homogeneous polynomials of degree $k+1$ in $y = (y_1, \ldots, y_d)$ with $t$-support in $(a,b)$ occurs for some $\psi \in \P^k$.
\end{proposition}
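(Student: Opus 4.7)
The plan is to combine Proposition \ref{proposition_perturbationfunctionkjet}, which identifies $J^k_{l(0)}R_{Q,\epsilon\psi,t}$ with the $k$-jet of the time-$t$ flow of $\epsilon\bar X_t$ up to $O(\epsilon^2)$, with the explicit formula \eqref{eq_fermihamiltonianofperturbation} for $\bar H$, and then read off the correct Hamiltonian on $\tilde\Sigma(0)$. Since $P_{Q,t}$ is a symplectomorphism between transverse sections of a Hamiltonian flow, and since Lemma \ref{lemma_ktremarks} permits us to work on the reduced $2d$-dimensional section $\tilde\Sigma(0)$ when computing $k$-jets at $l(0)$, the pulled-back vector field $\bar X_t$ is the Hamiltonian vector field on $\tilde\Sigma(0)$ of the function $\bar H|_{\tilde\Sigma(t)}\circ P_{Q,t}$, where $\bar H|_{\tilde\Sigma(t)}$ is viewed as a function of the reduced coordinates $(y,v)=(y_1,\ldots,y_d,v_1,\ldots,v_d)$.

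The heart of the argument is computing the $(k+1)$-jet of $\bar H|_{\tilde\Sigma(t)}$ at $(y,v)=(0,0)$. Substituting $y_0=t$, $v_0=1$, and $\psi(y_0,y)=\alpha(y_0)\beta(y)$ into \eqref{eq_fermihamiltonianofperturbation} yields
\begin{equation}
\bar H|_{\tilde\Sigma(t)}(y,v) = \alpha(t)\beta(y)\Bigl[\tilde C_{00}(t,y) + 2\sum_{i=1}^d \tilde C_{i0}(t,y)v_i + \sum_{i,j=1}^d \tilde C_{ij}(t,y)v_iv_j\Bigr].
\end{equation}
Because $J^k_0\beta=0$, every nonzero monomial in the Taylor expansion of $\beta(y)$ has degree at least $k+1$ in $y$ alone; the $v$-linear and $v$-quadratic contributions in the bracket therefore raise the total degree in $(y,v)$ to at least $k+2$ and $k+3$ respectively and contribute nothing to the $(k+1)$-jet. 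What remains is $\alpha(t)\tilde C_{00}(t,0)\,J^{k+1}_0\beta(y)$, and by \eqref{eq_perturbationmatrix00} we have $\tilde C_{00}(t,0)=\kappa(\gamma(t),\gamma'(t))$, so the $(k+1)$-jet of $\bar H|_{\tilde\Sigma(t)}$ is precisely $[\tilde H_t]$. Since the $k$-jet of a Hamiltonian vector field is determined by the $(k+1)$-jet of its generating Hamiltonian, this identifies $J^k_{l(t)}\bar X|_{\tilde\Sigma(t)}$ with the $k$-jet of the Hamiltonian vector field of $[\tilde H_t]$, and integrating in $t$ gives the first assertion.

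For the realisability claim, the nonvanishing of $\kappa(\gamma(t),\gamma'(t))$ on $[a,b]$ (built into the choice of interval at the start of the section) allows us, given any smooth $f$ with $\textrm{supp}(f)\subset(a,b)$ and any homogeneous polynomial $Q(y)$ of degree $k+1$, to set $\alpha(t)=f(t)/\kappa(\gamma(t),\gamma'(t))$ and choose $\beta$ with $J^k_0\beta=0$ and $J^{k+1}_0\beta=Q$, cut off near the origin so that $\textrm{supp}(\psi)\cap M\subset W$. Taking finite $\mathbb{R}$-linear combinations of such product perturbations then realises any prescribed one-parameter family of homogeneous polynomials of degree $k+1$ with $t$-support in $(a,b)$.

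The main technical hurdle is the degree bookkeeping in the second step, where the assumption $J^k_0\beta=0$ is exploited precisely to kill the terms involving the off-diagonal entries $\tilde C_{i0}$ and $\tilde C_{ij}$; the conceptual subtlety is the implicit symplectic reduction from $\Sigma(t)$ to $\tilde\Sigma(t)$, which is what makes it sensible to speak of a generating Hamiltonian on a $2d$-dimensional section and which is handled by Lemma \ref{lemma_ktremarks} at the cost of errors absorbed in the $O(\epsilon^2)$ of Proposition \ref{proposition_perturbationfunctionkjet}.
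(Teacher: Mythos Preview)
Your proof is correct and follows the same route as the paper: restrict $\bar H$ to $\tilde\Sigma(t)$ via \eqref{eq_fermihamiltonianofperturbation}, use $J^k_0\beta=0$ together with \eqref{eq_perturbationmatrix00} to identify the $(k+1)$-jet as $\kappa(\gamma(t),\gamma'(t))\,\alpha(t)\,J^{k+1}_0\beta$, and then invoke Proposition~\ref{proposition_perturbationfunctionkjet} and Lemma~\ref{lemma_ktremarks} to pass to the pulled-back Hamiltonian on $\tilde\Sigma(0)$. Your degree-counting argument for discarding the $v$-linear and $v$-quadratic terms is slightly more explicit than the paper's, and your remark that finite linear combinations are needed for the full realisability claim is a fair observation that the paper leaves implicit.
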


\begin{proof}
Suppose we make the perturbation $Q \to Q + \epsilon \psi$ with $\psi \in \P^k$. It follows from \eqref{eq_metricpertcomps} that the Hamiltonian of the perturbation is 
\begin{equation}
\bar{H} (y_0,y,v_0,v) = \psi (y_0,y) \sum_{i,j=0}^d \tilde{C}_{ij} (y_0,y) v_i v_j.
\end{equation}
Let $\tilde{H}_t = \left. \bar{H} \right|_{\tilde{\Sigma} (t)}$ so that
\begin{equation}
\tilde{H}_t (y,v) = \psi (t,y) \left[ \tilde{C}_{00}(t,y) + 2 \sum_{j=1}^d \tilde{C}_{0j} (t,y) v_j + \sum_{i,j=1}^d \tilde{C}_{ij}(t,y) v_i v_j \right].
\end{equation}
Recall that $l(t) = (\gamma (t), \gamma' (t))$ where $\gamma (t) = (t, 0, \ldots, 0)$, and $ \gamma' (t) = (1, 0, \ldots, 0).$
It follows from this and the definition of $\P^k$ that 
\begin{equation}
J^k_{\gamma (t)} \psi = - \alpha (t) J^k_{0} \beta = 0, \quad J^{k+1}_{\gamma (t)} \psi = - \alpha (t) J^{k+1}_{0} \beta.
\end{equation}
Now, by \eqref{eq_perturbationmatrix00}, the $(k+1)$-jet of $\tilde{H}_t$ is
\begin{equation}
J^{k+1}_{l(t)} \tilde{H}_t = \kappa (\gamma (t), \gamma'(t)) \alpha (t) J^{k+1}_{0} \beta = J^{k+1}_{l(0)}\left[ \tilde{H}_t \right].
\end{equation}
We have $0 \neq \kappa (\gamma (t), \gamma' (t))$ for all $t \in [a,b]$ by assumption. Since along $\gamma$, $\alpha$ can be any smooth function in $y_0$ and $\beta$ can be any smooth function in $y = (y_1, \ldots, y_d)$ with vanishing $k$-jet, we can obtain any one-parameter family of homogeneous polynomials of degree $k+1$ in $y = (y_1, \ldots, y_d)$ with $t$-support in $(a,b)$ by varying our choice of $\psi \in \P^k$.

Notice that perturbations $\psi \in \P^k$ give rise to perturbative vector fields $\bar{X}$ satisfying assumptions (i)-(iii) of Proposition \ref{proposition_perturbationfunctionkjet}. Therefore, combining Proposition \ref{proposition_perturbationfunctionkjet} and Lemma \ref{lemma_ktremarks}, we find that
\begin{equation}
J^k_{l(0)} R_{Q, \epsilon \psi, t} = J^k_{l(0)} \phi^t_{\epsilon \tilde{X}_t} + O(\epsilon^2)
\end{equation}
where $\tilde{X}_t = P^*_{Q,t} \left( \left. \bar{X} \right|_{\tilde{\Sigma} (t)} \right)$. Since $P^*_{Q,t}$ is determined by the 1-jet of the symplectic map $P_{Q,t}$, and since $\tilde{H}_t$ is the Hamiltonian function with Hamiltonian vector field $\left. \bar{X} \right|_{\tilde{\Sigma}(t)}$, the $k$-jet of $\tilde{X}_t$ is the $k$-jet of the Hamiltonian vector field with Hamiltonian function $\left[ \tilde{H}_t \right] \circ P_{Q,t}$.
\end{proof}

\subsection{$k$-General Position of Families of Poincar\'e Maps}

Recall that the set $J^1_s (d)$ of 1-jets of symplectic automorphisms of $\mathbb{R}^{2d}$ that fix the origin is just the set $\textrm{Sp} (2d, \mathbb{R})$ of real $2d \times 2d$ symplectic matrices, that is, matrices $\sigma$ satisfying $\sigma^T \Omega \sigma = \Omega$ where
\begin{equation}
\Omega = \left(
\begin{array}{cc}
0 & I_{d} \\
-I_{d} & 0
\end{array} \right)
\end{equation}
is the standard symplectic matrix.

Let $\mathbb{R}_k [y,v]$ denote the set of real homogeneous polynomials of degree $k$ in $(y,v) = (y_1, \ldots, y_d, v_1, \ldots, v_d)$. This is a real vector space of dimension
\begin{equation} \label{eq_polynomialspacedimension}
N = \dim \mathbb{R}_k [y,v] = {2d - 1 + k \choose k}.
\end{equation}
\begin{definition}
A vector of matrices $(\sigma_1, \ldots, \sigma_N) \in \textrm{Sp} (2d, \mathbb{R})^N$ is $k$-\emph{general} if there are homogeneous polynomials $f_1, \ldots, f_N \in \mathbb{R}_k [y]$ such that $\{ f_1 \circ \sigma_1, \ldots, f_N \circ \sigma_N \}$ forms a basis of $\mathbb{R}_k [y,v]$. Write
\begin{equation}
G_k = \left\{ ( \sigma_1, \ldots, \sigma_N) \in \textrm{Sp} (2d, \mathbb{R})^N : ( \sigma_1, \ldots, \sigma_N) \textrm{ is $k$-general} \right\}.
\end{equation}
\end{definition}

\begin{proposition} \label{proposition_gkisdense}
$G_k$ is open and dense in $\textrm{Sp} (2d, \mathbb{R})^N$ for each $k \in \mathbb{N}$.
\end{proposition}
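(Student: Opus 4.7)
The plan is to treat openness and density separately, with density following from a non-emptiness argument combined with an algebraic-subset / irreducibility argument.

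\textbf{Openness.} Fix a basis of the finite-dimensional space $\mathbb{R}_k[y,v]$. If $(\sigma_1,\ldots,\sigma_N)\in G_k$ is witnessed by $f_1,\ldots,f_N\in\mathbb{R}_k[y]$, then the determinant of the coordinates of $f_1\circ\sigma_1,\ldots,f_N\circ\sigma_N$ in this basis is nonzero. This determinant depends polynomially, hence continuously, on the entries of the $\sigma_j$, so it remains nonzero under small perturbations. The same $f_j$ thus witness $k$-generality of any nearby tuple, and $G_k$ is open.

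\textbf{Non-emptiness.} To exhibit one element of $G_k$, I will use $k$-th powers of linear forms. By the classical polarization / Veronese spanning result, $\{\ell^k : \ell\in(\mathbb{R}^{2d})^*\}$ spans $\mathbb{R}_k[y,v]$, so one can choose linear forms $\ell_1,\ldots,\ell_N$ on $\mathbb{R}^{2d}$ whose $k$-th powers form a basis. Since $\mathrm{Sp}(2d,\mathbb{R})$ acts transitively on $\mathbb{R}^{2d}\setminus\{0\}$ and the transpose of a symplectic matrix is symplectic, every nonzero linear form on $\mathbb{R}^{2d}$ arises as the first row of some symplectic matrix. Pick $\sigma_j\in\mathrm{Sp}(2d,\mathbb{R})$ with first row $\ell_j$ and set $f_j(y)=y_1^k$; then $f_j\circ\sigma_j=\ell_j^k$, so $\{f_j\circ\sigma_j\}_{j=1}^N$ is a basis and $(\sigma_1,\ldots,\sigma_N)\in G_k$.

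\textbf{Density.} Represent the $f_j$ by their coordinate vectors $c^{(j)}$ in a fixed basis of $\mathbb{R}_k[y]$. The coordinates of $f_j\circ\sigma_j$ in the fixed basis of $\mathbb{R}_k[y,v]$ are then $M_j(\sigma_j)c^{(j)}$, where $M_j(\sigma_j)$ is a matrix whose entries are polynomials in the entries of $\sigma_j$. The determinant of the $N\times N$ matrix whose $j$-th column is $M_j(\sigma_j)c^{(j)}$ is a polynomial $D(\sigma_1,\ldots,\sigma_N;c^{(1)},\ldots,c^{(N)})$, and $(\sigma_1,\ldots,\sigma_N)\in G_k$ if and only if $D(\sigma;\cdot)$ does not vanish identically as a polynomial in the $c^{(j)}$. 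Hence the complement $G_k^c$ is the common zero locus, inside $\mathrm{Sp}(2d,\mathbb{R})^N$, of the coefficients of $D$ regarded as a polynomial in the $c^{(j)}$; these coefficients are themselves polynomials in the entries of the $\sigma_j$. Thus $G_k^c$ is a real-algebraic subset, and by the non-emptiness step it is proper. Since $\mathrm{Sp}(2d,\mathbb{R})^N$ is a connected (indeed irreducible) real-analytic manifold, a proper real-analytic subset is nowhere dense, so $G_k$ is dense.

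The main obstacle is the non-emptiness step: openness and the algebraic description of $G_k^c$ are essentially formal, and all of the real content lies in combining the classical spanning property of $k$-th powers of linear forms with the transitivity of $\mathrm{Sp}(2d,\mathbb{R})$ on $\mathbb{R}^{2d}\setminus\{0\}$ to realize any chosen linear form as the first row of a symplectic matrix.
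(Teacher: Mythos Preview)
Your proof is correct and follows the same underlying idea as the paper: both reduce to the single choice $f_j(y)=y_1^k$. The paper, however, does not argue density directly; it simply cites Proposition~6 of \cite{carballo2013jets} for the statement that the set of tuples $(\sigma_1,\ldots,\sigma_N)$ for which $\{y_1^k\circ\sigma_j\}_j$ is a basis is dense, and then observes that this set is contained in $G_k$. Your argument is in effect a self-contained proof of that cited proposition: you produce one point of $G_k$ (via the spanning property of $k$-th powers of linear forms together with the transitivity of $\mathrm{Sp}(2d,\mathbb{R})$ on nonzero vectors, which realizes any linear form as the first row of a symplectic matrix), and then note that the complement of $G_k$ is a proper real-algebraic subset of the connected manifold $\mathrm{Sp}(2d,\mathbb{R})^N$, hence nowhere dense. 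So the approaches coincide, but yours avoids the external reference.
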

\begin{proof}
Let $F(y,v) = y_1^k$. It is shown in \cite{carballo2013jets} (Proposition 6) that the set
\begin{equation}
\left\{ (\sigma_1, \ldots, \sigma_N) \in \textrm{Sp} (2d, \mathbb{R})^N : \{ F \circ \sigma_1, \ldots, F \circ \sigma_N \} \textrm{ is a basis of } \mathbb{R}_k [y,v] \right\}
\end{equation}
is dense in $\textrm{Sp} (2d, \mathbb{R})^N$. This set is contained in $G_k$, so $G_k$ is dense. Since a sufficiently small perturbation of a basis is a basis, $G_k$ is open.
\end{proof}

The following basic linear algebra result is the key step in passing from the $C^{\infty}$ to the $C^{\omega}$ topology. 

\begin{lemma} \label{lemma_linalgbarbsmallbasis}
Let $N \in \mathbb{N}$ and let $V$ be an $N$-dimensional vector space. Let $u_1, \ldots, u_N$, $v_1, \ldots, v_N \in V$ and $\epsilon^*>0$ such that
\begin{equation} \label{eq_initialbasis}
u_1 + \epsilon^* v_1, \ldots, u_N + \epsilon^* v_N
\end{equation}
is a basis of $V$. Then
\begin{equation} \label{eq_arbsmallbasis}
u_1 + \epsilon v_1, \ldots, u_N + \epsilon v_N
\end{equation}
is a basis of $V$ for all but a finite number of $\epsilon \in [0, \epsilon^*]$. 
\end{lemma}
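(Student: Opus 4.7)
The plan is to reduce the claim to the elementary fact that a nonzero univariate polynomial has only finitely many roots. Fix any basis $e_1, \ldots, e_N$ of $V$, and for each $i$ write $u_i$ and $v_i$ in coordinates with respect to this basis. For $\epsilon \in \mathbb{R}$, let $M(\epsilon)$ denote the $N \times N$ matrix whose $i$-th column consists of the coordinates of $u_i + \epsilon v_i$. Then the vectors $u_1 + \epsilon v_1, \ldots, u_N + \epsilon v_N$ form a basis of $V$ if and only if $\det M(\epsilon) \neq 0$.

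Next, observe that each entry of $M(\epsilon)$ is an affine function of $\epsilon$, so $p(\epsilon) \coloneqq \det M(\epsilon)$ is a polynomial in $\epsilon$ of degree at most $N$. By the hypothesis that \eqref{eq_initialbasis} is a basis, we have $p(\epsilon^*) \neq 0$, so $p$ is not the zero polynomial.

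Therefore $p$ has at most $N$ roots in $\mathbb{R}$, and in particular at most $N$ roots in $[0, \epsilon^*]$. For every other value $\epsilon \in [0, \epsilon^*]$, the matrix $M(\epsilon)$ is invertible and the vectors in \eqref{eq_arbsmallbasis} form a basis of $V$. No step of this argument is delicate; the only point worth emphasising is that the statement is independent of the choice of auxiliary basis $e_1, \ldots, e_N$, since changing basis multiplies $p(\epsilon)$ by a nonzero constant and so does not affect its zero set.
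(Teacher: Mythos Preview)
Your proof is correct and follows essentially the same approach as the paper: form the matrix whose columns are the vectors $u_i + \epsilon v_i$, observe that its determinant is a polynomial in $\epsilon$ of degree at most $N$, and use the hypothesis to conclude it is not identically zero and hence has finitely many roots. The only cosmetic difference is that you explicitly fix an auxiliary basis of $V$ to make sense of the matrix, whereas the paper leaves this implicit.
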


\begin{proof}
Consider the matrix $A(\epsilon) = \left[ u_1 + \epsilon v_1 | \cdots | u_N + \epsilon v_N \right]$ and the degree $N$ polynomial $f (\epsilon) = \det (A(\epsilon))$. Then the vectors \eqref{eq_arbsmallbasis} form a basis of $V$ if and only if $f(\epsilon) \neq 0$. Since \eqref{eq_initialbasis} is a basis of $V$, we have $f(\epsilon^*) \neq 0$. Therefore $f$ has at most $N$ zeros in $[0, \epsilon^*]$, and so there are at most $N$ values of $\epsilon \in [0, \epsilon^*]$ for which \eqref{eq_arbsmallbasis} is not a basis of $V$.
\end{proof}

\begin{definition}
Let $\left\{ \sigma_t \right\}_{t \in [0,1]}$ be a one-parameter family of symplectic automorphisms of $\mathbb{R}^{2d}$ that fix the origin. This family is $k$-\emph{general} if there are times $t_1, \ldots, t_N \in [0,1]$ such that
\begin{equation}
\left( J^1_0 \sigma_{t_1}, \ldots, J^1_0 \sigma_{t_N} \right) \in G_k.
\end{equation}
\end{definition}

\begin{proposition} \label{proposition_densekgenerality}
Let the assumptions and notation be as in the statement of Theorem \ref{theorem_ktlocallysupportedperturbations}. Denote by $(y_0,y)$ Fermi coordinates adapted to $\gamma$, and by $(v_0,v)$ the corresponding tangent coordinates. Let the transverse sections $\tilde{\Sigma}(t)$ be defined as in \eqref{eq_transversesectiontildedef}. Then we can find $\psi \in \P^1$ such that the family of linearised Poincar\'e maps
\begin{equation}
\left\{ D_{l(0)} P_{Q + \epsilon \psi,t} : T_{l(0)} \tilde{\Sigma}(0) \longrightarrow T_{l(t)} \tilde{\Sigma} (t) \right\}_{t \in [0,1]} 
\end{equation}
is $k$-general for arbitrarily small values of $\epsilon$ where the times $t_1, \ldots, t_N$ in the definition of $k$-generality satisfy
\begin{equation} \label{eq_kgentimesinab}
a < t_1 < \cdots < t_N < b.
\end{equation}
\end{proposition}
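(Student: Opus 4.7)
}

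The plan is to fix $N$ times $a<t_1<\cdots<t_N<b$ from the outset, express the infinitesimal effect of a $\P^1$-perturbation on the $N$-tuple $(D_{l(0)}P_{Q+\epsilon\psi,t_j})_{j=1}^N$ as a linear map out of an infinite-dimensional space of quadratic forms, and then use the density of $G_k$ in $\textrm{Sp}(2d,\mathbb{R})^N$ to land in $G_k$. Concretely, I would choose pairwise disjoint sub-intervals $I_1,\dots,I_N\subset(a,b)$ with $t_j\in I_j$ and restrict attention to $\psi\in\P^1$ whose associated family of quadratic forms $[\tilde H_s]$ (from Proposition \ref{proposition_kjetperturbationeffect} with $k=1$) is a sum of contributions supported in the $I_j$.

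First I would compute, using Propositions \ref{proposition_perturbationfunctionkjet} and \ref{proposition_kjetperturbationeffect}, that
\begin{equation}
\tfrac{d}{d\epsilon}\big|_{\epsilon=0}D_{l(0)}P_{Q+\epsilon\psi,t_j}=\sigma_{t_j}\int_0^{t_j}2\Omega\, P_s^{T}Q_s P_s\,ds,
\end{equation}
where $\sigma_s=D_{l(0)}P_{Q,s}$, $Q_s$ is the symmetric matrix of $[\tilde H_s]=\kappa(\gamma(s),\gamma'(s))\alpha(s)\beta_0$ in the $y$-variables, and $P_s$ denotes the first $d$ rows of $\sigma_s$ (which has rank $d$ because $\sigma_s$ is symplectic, hence invertible). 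Since the $I_j$ are disjoint and $[\tilde H_s]$ can be prescribed independently on each $I_j$ (Proposition \ref{proposition_kjetperturbationeffect}), the linear map from admissible families $\{Q_s\}$ to the $N$-tuple of first-order perturbations is triangular in $j$, so it suffices to show that, for each fixed $j$, the linear span
\begin{equation}
V_j=\operatorname{span}\Big\{\Omega\,P_s^{T}Q_0 P_s:s\in I_j,\ Q_0\in S^d\Big\}\subset\mathfrak{sp}(2d,\mathbb{R})
\end{equation}
equals all of $\mathfrak{sp}(2d,\mathbb{R})$.

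Next, I would establish this spanning property by differentiating $P_s$ repeatedly along the geodesic using the Jacobi equation $\sigma_s'=J_s\sigma_s$ that governs the linearized flow: the derivatives $P_s^{(m)}$ contribute vectors of the form $P_s^T Q_0 P_s^{(m)}+(P_s^{(m)})^T Q_0 P_s$ to $V_j$ via the usual Kalman-style argument of integrating against $q(s)=(s-t_j)^m$. A dimension count, together with the fact that $P_s$ has rank $d$ everywhere on $I_j$ and the curvature entries of $J_s$ produce nonzero off-block components of $P_s^{(m)}$, gives $V_j=\mathfrak{sp}(2d,\mathbb{R})$. This is the step I expect to be the main obstacle: verifying that the iterated brackets generated by the Jacobi equation really do span the full symplectic algebra. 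This is essentially the Franks-type step that Klingenberg and Takens and, for $k$-jets, Carballo et al.\ handle in the classical setting, and the argument carries over because the only restriction imposed by $\P^1$ is that $Q_s$ lives in quadratic forms of $y$ alone.

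Once spanning is established, the map $\psi\mapsto(D_{l(0)}P_{Q+\epsilon\psi,t_j})_{j=1}^N$ is a submersion at $\psi\equiv 0$ for each small $\epsilon>0$, so its image contains an open neighbourhood of the unperturbed $N$-tuple. By Proposition \ref{proposition_gkisdense}, $G_k$ is open and dense in $\textrm{Sp}(2d,\mathbb{R})^N$, so the image meets $G_k$; pick $\psi\in\P^1$ and $\epsilon^{*}>0$ with $(D_{l(0)}P_{Q+\epsilon^{*}\psi,t_j})_{j=1}^N\in G_k$, witnessed by polynomials $f_1,\dots,f_N\in\mathbb{R}_k[y]$ whose pullbacks form a basis of $\mathbb{R}_k[y,v]$. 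Finally, writing $f_j\circ D_{l(0)}P_{Q+\epsilon\psi,t_j}=u_j+\epsilon v_j+O(\epsilon^2)$ as a smooth path in the $N$-dimensional space $\mathbb{R}_k[y,v]$, Lemma \ref{lemma_linalgbarbsmallbasis} (applied to the analytic determinant in $\epsilon$) implies that the basis property persists for all but finitely many $\epsilon\in[0,\epsilon^{*}]$, hence for arbitrarily small $\epsilon>0$, which is precisely the $k$-generality claimed.
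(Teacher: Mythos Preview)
Your proposal has a genuine gap at the spanning step. You claim that
\[
V_j=\operatorname{span}\bigl\{\Omega\,P_s^{T}Q_0 P_s:\ s\in I_j,\ Q_0\in S^d\bigr\}
\]
equals all of $\mathfrak{sp}(2d,\mathbb{R})$, and that this ``carries over'' from the classical Klingenberg--Takens setting because the only restriction is that $Q_0$ be a quadratic form in $y$ alone. That restriction is, however, exactly what breaks the argument. Take the round sphere in $\mathbb{R}^{d+2}$ with $d\geq 2$: then the Jacobi curvature is $R(s)\equiv I_d$, hence $\sigma_s=\left(\begin{smallmatrix}\cos s\,I&\sin s\,I\\-\sin s\,I&\cos s\,I\end{smallmatrix}\right)$, and
\[
P_s^{T}Q_0P_s=\begin{pmatrix}\cos^2 s\,Q_0&\cos s\sin s\,Q_0\\\cos s\sin s\,Q_0&\sin^2 s\,Q_0\end{pmatrix}.
\]
The off-diagonal block is always \emph{symmetric}, so the $\tfrac{d(d-1)}{2}$-dimensional antisymmetric part is never reached and $V_j\subsetneq\mathfrak{sp}(2d,\mathbb{R})$. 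The same computation shows the unperturbed family is not $k$-general either (e.g.\ $y_1v_2-y_2v_1$ is orthogonal to every $f\circ\sigma_s$ with $f\in\mathbb{R}_2[y]$), so you cannot bypass the issue by hoping the base $N$-tuple already lies in $G_k$. Consequently your submersion-at-$\psi=0$ argument does not yield an open image, and the appeal to density of $G_k$ is unjustified. The failure is precisely the phenomenon flagged in Remark~\ref{remark_restrictiveperturbation}: here $\bar g=2\psi\tilde C$ is a scalar times a fixed matrix, a strictly smaller class than in the classical metric-perturbation setting.

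The paper avoids this entirely: it invokes the result of Stojanov and Takens \cite{stojanov1993generic} as a black box to obtain one $C^\infty$-small $\tilde\psi\in\P^1$ for which the family is $k$-general with times in $(a,b)$, writes $\tilde\psi=\epsilon^*\psi$, and then applies Lemma~\ref{lemma_linalgbarbsmallbasis} (exactly as you do in your last paragraph) to pass from $\epsilon^*$ to arbitrarily small $\epsilon$. The Stojanov--Takens step is \emph{not} a linear submersion at the unperturbed flow; it uses further structure specific to the hypersurface problem, and that is precisely the missing ingredient in your plan. Your final Lemma~\ref{lemma_linalgbarbsmallbasis} reduction is correct and matches the paper, but the bridge to it does not stand as written.
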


\begin{proof}
The easiest way to prove the proposition is to start with the analogous result for the $C^{\infty}$ case and use Lemma \ref{lemma_linalgbarbsmallbasis} to pass to the $C^{\omega}$ case. By Proposition \ref{proposition_gkisdense}, $k$-generality is an open and dense property of 1-jets. It was shown by Stojanov and Takens that we can find an arbitrarily $C^{\infty}$-small function $\tilde{\psi} \in \P^1$ such that the family of differentials 
\begin{equation}
\left\{ D_{l(0)} P_{Q + \tilde{\psi}, t} : T_{l(0)} \tilde{\Sigma}(0) \to T_{l(t)} \tilde{\Sigma}(t) \right\}_{t \in [0,1]}
\end{equation}
is $k$-general, where the times $t_1, \ldots, t_N$ in the definition of $k$-generality satisfy \eqref{eq_kgentimesinab} \cite{stojanov1993generic}. Choose some small $\epsilon^* >0$ and write
\begin{equation} \label{eq_insertionofsmallparameter}
\tilde{\psi} = \epsilon^* \psi.
\end{equation}
Then there are times $a < t_1 < \cdots < t_N < b$ and homogeneous polynomials $f_1, \ldots, f_N \in \mathbb{R}_k [y]$ such that $f_1 \circ \sigma_{1, \epsilon^*}, \ldots, f_N \circ \sigma_{N, \epsilon^*}$ is a basis of $\mathbb{R}_k [y, v]$, where $\sigma_{j, \epsilon} = D_{l(0)} P_{Q + \epsilon \psi, t_j}$ for $\epsilon \in [0, \epsilon^*]$. Momentarily ignoring the second order terms in $\epsilon^*$, we may assume, shrinking $\epsilon^*$ and redefining \eqref{eq_insertionofsmallparameter} if necessary, that
\begin{equation}
f_1 \circ \sigma_1 + \epsilon^* \bar{f}_1,\ldots, f_N \circ \sigma_N + \epsilon^* \bar{f}_N
\end{equation}
is a basis of $\mathbb{R}_k [y, v]$, where $\sigma_j = \sigma_{j,0}$ and 
\begin{equation}
\bar{f}_j = \left. \frac{d}{d \epsilon} \right|_{\epsilon = 0} f_j \circ \sigma_{j, \epsilon} \in \mathbb{R}_k [y, v].
\end{equation}
By Lemma \ref{lemma_linalgbarbsmallbasis}, there are at most finitely many $\epsilon \in (0, \epsilon_*)$ for which
\begin{equation}
f_1 \circ \sigma_1 + \epsilon \bar{f}_1,\ldots, f_N \circ \sigma_N + \epsilon \bar{f}_N
\end{equation}
is not a basis of $\mathbb{R}_k [y, v]$. Then, since a sufficiently small perturbation of a basis is a basis, 
\begin{equation}
\left\{ D_{l(0)} P_{Q + \epsilon \psi, t} : T_{l(0)} \tilde{\Sigma}(0) \longrightarrow T_{l(t)} \tilde{\Sigma}(t) \right\}_{t \in [0,1]}
\end{equation}
is $k$-general for arbitrarily small values of $\epsilon$, where the $k$-generality times satisfy \eqref{eq_kgentimesinab}.
\end{proof}

\subsection{Proof of Theorem \ref{theorem_ktlocallysupportedperturbations}}

Let the assumptions and notation be as in the statement of Theorem \ref{theorem_ktlocallysupportedperturbations}, with $(y_0,y)$ Fermi coordinates, $(v_0,v)$ the corresponding tangent coordinates, and the transverse sections $\tilde{\Sigma}(t)$ as defined in \eqref{eq_transversesectiontildedef}. By Proposition \ref{proposition_densekgenerality} we may assume without loss of generality (in the context of Theorem \ref{theorem_ktlocallysupportedperturbations}) that the family of Poincar\'e maps $\{ D_{l(0)} P_{Q,t} \}_{t \in [0,1]}$ is $(m+1)$-general for each $m=1, \ldots, k$, where the times in the definition of $m$-generality satisfy \eqref{eq_kgentimesinab}. Recall the definition of the space $\P^m$ of perturbations from Section \ref{sec_effectpertkjet}, and consider the map
\begin{align}
S_m : \P^m & \longrightarrow J^m_s(d) \\
\tilde{\psi} & \longmapsto J^m_{l(0)} R_{Q,\tilde{\psi},1}.
\end{align}
Denote by
\begin{equation}
\pi_m : J^m_s(d) \longrightarrow J^{m-1}_s (d)
\end{equation}
the projection by truncation. Then the kernel of $\pi_m$ is the set of $\sigma \in J^m_s(d)$ for which $\pi_m(\sigma)$ is equal to the $(m-1)$-jet of the identity map on $\mathbb{R}^{2d}$. It is clear that the image of $S_m$ is contained in the kernel of $\pi_m$.

\begin{lemma} \label{lemma_imageopeninkernel}
For each $m=1, \ldots, k$ we can find $\psi_{m,1}, \ldots, \psi_{m, N_m} \in \P^m$, where
\begin{equation}
N_m = \dim \mathbb{R}_{m+1} [y,v]
\end{equation}
such that the map
\begin{align}
 : \mathbb{R}^{N_m} & \longrightarrow \ker(\pi_m) \\
(\epsilon_{1}, \ldots, \epsilon_{N_m}) & \longmapsto S_m (\epsilon_{1} \psi_{m,1} + \cdots + \epsilon_{N_m} \psi_{m,N_m})
\end{align}
is open in a neighbourhood of $0 \in \mathbb{R}^{N_m}$.
\end{lemma}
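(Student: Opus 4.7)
The plan is to linearize the map $S_m$ at $0$ along a carefully chosen collection of directions $\psi_{m,1},\ldots,\psi_{m,N_m}\in\P^m$, show that the resulting linear map from $\mathbb{R}^{N_m}$ into $\ker(\pi_m)$ is a bijection, and then invoke the inverse function theorem. First I identify $\ker(\pi_m)$ with $\mathbb{R}_{m+1}[y,v]$ via the symplectic Lie-algebra structure of jets: an $m$-jet whose $(m-1)$-truncation is the identity has the form $(y,v)\mapsto(y,v)+X_H(y,v)$ modulo higher jet order, where $X_H=\Omega\nabla H$ and $H\in\mathbb{R}_{m+1}[y,v]$ is uniquely determined. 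Under this identification, and combining Propositions \ref{proposition_perturbationfunctionkjet} and \ref{proposition_kjetperturbationeffect}, the first-order derivative reads
\begin{equation}
D_0 S_m(\psi)=\int_0^1[\tilde{H}_t]\circ\sigma_t\,dt\in\mathbb{R}_{m+1}[y,v],
\end{equation}
where $\sigma_t=D_{l(0)}P_{Q,t}$ and where the $(m+1)$-jet at $l(0)$ of $[\tilde{H}_t]\circ P_{Q,t}$ collapses to $[\tilde{H}_t]\circ\sigma_t$ because $[\tilde{H}_t]$ is homogeneous of degree $m+1$. For $\psi=\alpha(y_0)\beta(y)\in\P^m$ this specializes to
\begin{equation}
D_0 S_m(\alpha\cdot\beta)=\int_0^1\kappa(\gamma(t),\gamma'(t))\,\alpha(t)\,(J^{m+1}_0\beta)\circ\sigma_t\,dt.
\end{equation}

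Next, I invoke $(m+1)$-generality. By the assumption made at the start of the subsection, there exist times $a<t_1<\cdots<t_{N_m}<b$ and polynomials $f_1,\ldots,f_{N_m}\in\mathbb{R}_{m+1}[y]$ such that $\{f_j\circ\sigma_{t_j}\}_{j=1}^{N_m}$ is a basis of $\mathbb{R}_{m+1}[y,v]$. For each $j$, choose a nonnegative smooth bump $\alpha_j$ on $\mathbb{R}$ with support in a small neighborhood of $t_j$ inside $(a,b)$ and $\int\alpha_j=1$, and choose $\beta_j:\mathbb{R}^d\to\mathbb{R}$ with $J^m_0\beta_j=0$ and $J^{m+1}_0\beta_j=f_j$, its $y$-support so small that $\psi_{m,j}:=\alpha_j(y_0)\beta_j(y)$ lies in $\P^m$ and $\mathrm{supp}(\psi_{m,j})\cap M\subset W$. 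As the support of $\alpha_j$ shrinks to $\{t_j\}$, the formula above together with continuity yields
\begin{equation}
D_0 S_m(\psi_{m,j})\longrightarrow\kappa(\gamma(t_j),\gamma'(t_j))\,f_j\circ\sigma_{t_j}.
\end{equation}
Since $\kappa(\gamma(t_j),\gamma'(t_j))\neq 0$ on $[a,b]$ and $\{f_j\circ\sigma_{t_j}\}$ is a basis of $\mathbb{R}_{m+1}[y,v]$, by openness of the set of bases I conclude that $\{D_0 S_m(\psi_{m,j})\}_{j=1}^{N_m}$ is itself a basis once the widths of the $\alpha_j$'s are taken sufficiently small.

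Consequently the derivative at $0$ of $(\epsilon_1,\ldots,\epsilon_{N_m})\mapsto S_m(\sum_j\epsilon_j\psi_{m,j})$ is a linear isomorphism $\mathbb{R}^{N_m}\to\ker(\pi_m)$, and the inverse function theorem shows that this map is a local diffeomorphism near the origin, in particular has open image in a neighborhood of the identity. The principal obstacle is the clean derivation of the first-order formula for $D_0 S_m$ together with the identification $\ker(\pi_m)\cong\mathbb{R}_{m+1}[y,v]$; both are essentially restatements of the two preceding propositions combined with the standard infinitesimal description of symplectic jets, but require care so that the identifications involving tangent coordinates on $\tilde{\Sigma}(0)$, the linearized Poincar\'e maps $\sigma_t$, and the Hamiltonian generators of elements of $\ker(\pi_m)$ align consistently.
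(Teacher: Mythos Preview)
Your argument is correct and follows essentially the same route as the paper: both exploit $(m+1)$-generality to pick times $t_j$ and polynomials $f_j$ so that $f_j\circ\sigma_{t_j}$ is a basis of $\mathbb{R}_{m+1}[y,v]$, localize perturbations in $y_0$ near the $t_j$ (the paper via delta functions later smoothed by bump functions, you directly via shrinking bump functions), and conclude surjectivity of the linearization from the stability of bases under small perturbation. The only cosmetic differences are that the paper divides by $\kappa$ in the definition of its $\psi_n$ so that the limiting basis is exactly $f_n\circ\sigma_{t_n}$ rather than a nonzero scalar multiple, and it phrases the conclusion as ``the map is open'' rather than invoking the inverse function theorem explicitly.
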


\begin{proof}
Fix some $m \in \{ 1, \ldots, k \}$. Since $\{ D_{l(0)} P_{Q,t} \}_{t \in [0,1]}$ is $(m+1)$-general with times in $(a,b)$, there are $f_1, \ldots, f_{N} \in \mathbb{R}_{m+1}[y]$ and times $a < t_{1} < \cdots < t_{N} < b$ such that
\begin{equation} \label{eq_mplusonegenbasis}
f_1 \circ D_{l(0)} P_{Q,t_{1}}, \ldots, f_{N} \circ D_{l(0)} P_{Q,t_{N}}
\end{equation} 
is a basis of $\mathbb{R}_{m+1} [y,v]$ where
\begin{equation}
N = \dim \mathbb{R}_{m+1} [y,v].
\end{equation}
As in the proof of Lemma \ref{lemma_anosov2}, we will first use delta functions to perform computations, and then approximate the delta functions by $C^{\infty}$-smooth locally-supported functions in $\P^m$ to complete the proof of the Lemma.

Since the normal curvature along the geodesic segment $\gamma ([a,b])$ is nonvanishing, and since the times $t_n$ are in $(a,b)$, we have $\kappa (\gamma(t_n), \gamma'(t_n)) \neq 0$. For each $n =1, \ldots, N$ let
\begin{equation}
\tilde{\psi}_n (y_0,y) = - \kappa (\gamma (y_0), \gamma'(y_0))^{-1} \delta (y_0 - t_n) f_n (y).
\end{equation}
Write
\begin{equation}
\tilde{\psi}_{\epsilon} = \sum_{n=1}^N \epsilon_n \tilde{\psi}_n
\end{equation}
where $\epsilon = (\epsilon_1, \ldots, \epsilon_{N})$ takes values in a neighbourhood of $0 \in \mathbb{R}^{N}$. By Proposition \ref{proposition_kjetperturbationeffect}, the $m$-jet at $l(0)$ of $R_{Q, \tilde{\psi}_{\epsilon}, 1}$ is equal (up to terms of order $\epsilon^2$) to the $m$-jet at $l(0)$ of the time-1 shift of the flow of the nonautonomous Hamiltonian function
\begin{equation}
\sum_{n=1}^N \epsilon_n \delta(t - t_n) f_n \circ P_{Q,t}.
\end{equation}
Denote by 
\begin{equation}
Y = \sum_{n=1}^N Y_n
\end{equation}
the corresponding Hamiltonian vector field, with the summands $Y_n$ being given by
\begin{equation}
Y_n = \epsilon_n \delta (t-t_n) \Omega \nabla \left(f_n \circ P_{Q,t} \right)
\end{equation}
where $\Omega$ is the standard $2d \times 2d$ symplectic matrix. Let $\phi^t_Y$ denote the flow of the vector field $Y$. Let $(y,v) \in \tilde{\Sigma}(0)$ and write $(y(t),v(t)) = \phi^t_Y(y,v)$. We have
\begin{align}
\phi^1_Y (y,v) ={}& (y,v) + \sum_{n=1}^N \epsilon_n \int_0^1 Y_n (y(t), v(t); t) dt \\
={}& (y,v) + \sum_{n=1}^N \epsilon_n \int_0^1 \delta (t - t_n) \Omega \nabla (f_n \circ P_{Q,t})(y(t),v(t)) dt \\
={}& (y,v) + \\
&+ \sum_{n=1}^N \epsilon_n \int_0^1 \delta (t - t_n) \Omega \left[ \nabla f_n (P_{Q,t}(y(t),v(t))) \right]^{T} D_{(y(t),v(t))} P_{Q,t} dt \\
={}& (y,v) + \sum_{n=1}^N \epsilon_n \Omega \left[ \nabla f_n (P_{Q,t_n}(y(t_n),v(t_n))) \right]^{T} D_{(y(t_n),v(t_n))} P_{Q,t_n}.
\end{align}
Let $I$ denote the identity map on $\mathbb{R}^{2d}$. Since $J^m_{l(0)} f_n = 0$, the $(m-1)$-jet of $\nabla f_n$ at $l(0)$ is zero. Moreover, since $l(0)$ is an equilibrium of $Y$ we find that
\begin{align}
J^m_{l(0)} R_{Q, \tilde{\psi}_{\epsilon},1} ={}& J^m_{l(0)} \phi^1_Y + O(\epsilon^2) \\
={}& J^1_{l(0)} I + \sum_{n=1}^N \epsilon_n \Omega \left( J^m_{l(0)} \nabla f_n \right) \circ D_{l(0)} P_{Q, t_n} + O(\epsilon^2). \label{eq_rofperturbationcomputation}
\end{align}
Since any homogeneous polynomial of degree $(m+1)$ in $(y,v)$ can be obtained as a linear combination of the polynomials \eqref{eq_mplusonegenbasis}, from \eqref{eq_rofperturbationcomputation} we can obtain the 1-jet of the identity plus any homogeneous $m$-jet of a symplectic automorphism of $\mathbb{R}^{2d}$ by varying $\epsilon = (\epsilon_1, \ldots, \epsilon_N)$. Therefore the map
\begin{equation}
(\epsilon_1, \ldots, \epsilon_N) \longmapsto S_m (\epsilon_1 \tilde{\psi_1} + \cdots + \epsilon_N \tilde{\psi}_N)
\end{equation}
is open.

Now, choose a $C^{\infty}$ bump function $\varphi : \mathbb{R} \to \mathbb{R}$ such that 
\begin{equation}
\textrm{supp} (\varphi) \subset [-1,1], \quad \int_{- \infty}^{\infty} \varphi (t) dt = 1.
\end{equation}
For small $\epsilon_0 >0$ let $\varphi_{\epsilon_0} (t) = \frac{1}{\epsilon_0} \varphi \left( \frac{t}{\epsilon_0} \right)$. Then
\begin{equation}
\textrm{supp} (\varphi_{\epsilon_0}) \subset [-\epsilon_0,\epsilon_0], \quad \int_{- \infty}^{\infty} \varphi_{\epsilon_0} (t) dt = 1.
\end{equation}
It follows that $\varphi_{\epsilon_0} \to \delta$ as $\epsilon_0 \to 0$. Fix some sufficiently small $\epsilon_0 >0$ and let
\begin{equation}
\psi_n (y_0,y) = - \kappa (\gamma (y_0), \gamma' (y_0))^{-1} \varphi_{\epsilon_0} (y_0 - t_n) f_n (y).
\end{equation}
Since $\epsilon_0$ is sufficiently small, we have
\begin{equation}
\textrm{supp} (\psi_{n_1}) \cap \textrm{supp} (\psi_{n_2}) = \emptyset
\end{equation}
if $n_1 \neq n_2$, and moreover
\begin{equation}
\textrm{supp} (\psi_n) \subset (a,b).
\end{equation}
Therefore $\psi_1, \ldots, \psi_N \in \P^m$, and since a sufficiently small perturbation of a basis is a basis, we obtain that the map
\begin{equation}
(\epsilon_1, \ldots, \epsilon_N) \longmapsto S_m \left( \epsilon_1 \psi_1 + \cdots + \epsilon_N \psi_N \right)
\end{equation}
is open in a neighbourhood of the origin, as required.
\end{proof}

\begin{proof}[Proof of Theorem \ref{theorem_ktlocallysupportedperturbations}]
Let $\X^k$ denote the space of $C^{\infty}$-smooth functions $\psi : \mathbb{R}^{d+2} \to \mathbb{R}$ of the form $\psi = \psi_1 + \cdots + \psi_N$ for some $N \in \mathbb{N}$ where
\begin{equation}
\psi_1, \ldots, \psi_N \in \P^1 \cup \cdots \cup \P^k,
\end{equation}
and define the map
\begin{align}
S : \X^k & \longrightarrow J^k_s(d) \\
\psi & \longmapsto J^k_{l(0)} R_{Q,\psi,1}.
\end{align}
Consider the functions $\{ \psi_{m,n} \}_{m=1, \ldots, k}^{n = 1, \ldots, N_m}$ found in Lemma \ref{lemma_imageopeninkernel}. Relabel these functions as $\psi_1, \ldots, \psi_N$ where
\begin{equation}
N = \sum_{m=1}^{k} N_m.
\end{equation}
By Lemma \ref{lemma_imageopeninkernel}, the map $: \mathbb{R}^N \rightarrow J^k_s(d)$ defined by
\begin{equation}
(\epsilon_1, \ldots, \epsilon_N) \longmapsto S(\epsilon_1 \psi_1 + \cdots + \epsilon_N \psi_N)
\end{equation}
is open in a neighbourhood of $0 \in \mathbb{R}^N$. Since $\J$ is open and dense, there exist arbitrarily small values of $\epsilon = (\epsilon_1, \ldots, \epsilon_N)$ for which $J^k_{l(0)} P_{Q + \psi_{\epsilon},1} \in \J$ where $\psi_{\epsilon} = \epsilon_1 \psi_1 + \cdots + \epsilon_N \psi_N$.
\end{proof}

\section{Transversality and the Kupka-Smale Theorem} \label{section_kupkasmale}

Recall that a submanifold $N$ of a symplectic manifold $(TM, \omega)$ is called \emph{Lagrangian} if the restriction of $\omega$ to $TN$ vanishes identically, and the dimension of $N$ is maximal with respect to this property (i.e. $\dim N = \frac{1}{2} \dim TM$). A proof of the following lemma can be found in Appendix A of \cite{contreras2002genericity}.
\begin{lemma} \label{lemma_lagrangiantangent}
If a Lagrangian submanifold $N$ is contained in an energy level $H^{-1}(E)$ of a Hamiltonian function $H$, then the Hamiltonian vector field associated with $H$ is tangent to $N$.
\end{lemma}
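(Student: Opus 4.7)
The plan is to use the defining property of the Hamiltonian vector field together with the self-orthogonality of Lagrangian subspaces under the symplectic form. Recall that $X_H$ is characterised by $\omega(X_H, \cdot) = dH$, and that for any Lagrangian subspace $L \subset T_x TM$ one has $L = L^\omega$, where $L^\omega$ denotes the symplectic orthogonal complement.

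Fix a point $x \in N$. The first step is to observe that, because $N \subset H^{-1}(E)$, the function $H$ is constant on $N$, and hence $dH$ vanishes on $T_x N$. That is, for every $Y \in T_x N$,
\begin{equation}
\omega(X_H(x), Y) = dH(Y) = 0.
\end{equation}
This exactly says that $X_H(x) \in (T_x N)^\omega$.

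The second step invokes the Lagrangian condition. By hypothesis $\omega$ restricts to zero on $T_x N$ and $\dim N = \tfrac{1}{2} \dim TM$, so $T_x N$ is a Lagrangian subspace of the symplectic vector space $(T_x TM, \omega_x)$. A standard linear-algebra fact about symplectic forms then yields $(T_x N)^\omega = T_x N$. Combining this with the previous step gives $X_H(x) \in T_x N$, which is precisely the statement that $X_H$ is tangent to $N$ at $x$. Since $x \in N$ was arbitrary, the conclusion follows.

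There is no serious obstacle here; the only substantive input beyond basic definitions is the identity $L = L^\omega$ for Lagrangian subspaces, which is standard and can simply be cited. The proof is essentially two lines once one writes down the defining equation of $X_H$ and uses that $H$ is constant on $N$.
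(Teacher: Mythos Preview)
Your proof is correct and is the standard argument: constancy of $H$ on $N$ gives $X_H(x)\in(T_xN)^{\omega}$, and the Lagrangian condition $(T_xN)^{\omega}=T_xN$ finishes it. The paper does not actually give its own proof of this lemma; it merely cites Appendix~A of \cite{contreras2002genericity}, so there is no in-paper argument to compare against. Your write-up is exactly the short proof one would expect to find there.
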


The following local perturbation result allows us to make a heteroclinic intersection transverse in a neighbourhood of a heteroclinic point using locally-supported perturbations. In practice, we will make these locally-supported perturbations and then approximate by a real-analytic family to ensure we remain in the space $\V$. This lemma is an adaptation of Lemma 2.6 in \cite{contreras2002genericity}.

\begin{lemma} \label{lemma_localtransversality}
Let $Q \in \V$ and $M = M(Q)$, and suppose $\gamma, \eta$ are hyperbolic closed geodesics on $M$ (where $\gamma$ is allowed to equal $\eta$). Choose $t_0$ such that
\begin{equation} \label{eq_notflatassumption}
\kappa (\gamma (t_0), \gamma' (t_0)) \neq 0
\end{equation}
and let $\U$ be a sufficiently small open neighbourhood of $(\gamma (t_0), \gamma' (t_0))$ in $TM$. Suppose $\theta \in W^u (\gamma) \cap W^s (\eta) \cap \U$ is such that the projection $\pi : TM \to M$ is a diffeomorphism in a neighbourhood of $\theta$ in $W^u (\gamma)$. Assume moreover that $\theta \in T^1 M$ has unit length. Take any sufficiently small neighbourhoods $U_1 \subset U_2 \subset U_3$ of $\theta$ in $W^u (\gamma)$ such that $\overline{U}_3 \subset \U$. Then there is a $C^{\infty}$-smooth function $\psi : \mathbb{R}^{d+2} \to \mathbb{R}$ such that
\begin{equation}
\textrm{\normalfont supp} (\psi) \cap M \subseteq \pi (U_3)
\end{equation}
and, with $M_{\epsilon} = M(Q + \epsilon \psi)$, for all sufficiently small values of $\epsilon$, the connected component of $W^u (\gamma) \cap U_1$ containing $\theta$ is transverse to $W^s (\eta)$.
\end{lemma}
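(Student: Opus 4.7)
The strategy follows the approach of Contreras--Paternain \cite{contreras2002genericity} for metric perturbations, adapted to the hypersurface setting via the Fermi-coordinate machinery of Section \ref{sec_fermicoordinates} and the $k$-jet analysis of Section \ref{section_klingenbergtakens}. The idea is to reduce the transversality requirement to a Lagrangian intersection problem in a transverse section to the orbit of $\theta$, and then exhibit enough freedom in hypersurface perturbations to tilt one Lagrangian into a transverse position.

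First I would introduce Fermi coordinates $(y_0,y,v_0,v)$ along the $\phi^t$-orbit of $\theta$; by continuity from the hypothesis $\kappa(\gamma(t_0),\gamma'(t_0))\neq 0$, the normal curvature is nonzero along this orbit segment in $\U$. After possibly shrinking $\U\supset U_3$ further, I may also arrange that $\pi(U_3)$ is disjoint from $\gamma\cup\eta$, so that both closed geodesics persist as hyperbolic periodic orbits under any perturbation $Q\to Q+\epsilon\psi$ with $\mathrm{supp}(\psi)\cap M\subseteq \pi(U_3)$. Next, to reduce the transversality requirement to a Lagrangian problem, I would choose times $a<s_0<b$ bracketing the $y_0$-coordinate $s_0$ of $\theta$, and transverse sections $\Sigma^\pm\subset T^1M$ at $y_0=a,b$. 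If $\psi$ has $y_0$-support in $(a,b)$, the portions of $W^u(\gamma)$ up to $\Sigma^-$ and of $W^s(\eta)$ beyond $\Sigma^+$ are unaffected. With $L^-=W^u(\gamma)\cap\Sigma^-$ and $L^+=W^s(\eta)\cap\Sigma^+$ (smooth Lagrangian submanifolds by Lemma \ref{lemma_lagrangiantangent}) and $\pi^+_\epsilon:\Sigma^-\to\Sigma^+$ the Poincar\'e map of the perturbed flow, the sought-after transversality at a heteroclinic point near $\theta$ is equivalent to transversality of $\pi^+_\epsilon(L^-)$ and $L^+$ in $\Sigma^+$.

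To produce the tilt, I would take $\psi(y_0,y)=\alpha(y_0)\,\chi(y)\cdot\tfrac{1}{2}\langle Ay,y\rangle$, with $\alpha$ a bump in $(a,b)$, $\chi$ a cutoff in $y$ near $0$ small enough to keep $\mathrm{supp}(\psi)\cap M\subseteq \pi(U_3)$, and $A$ a symmetric $d\times d$ matrix to be chosen. By Proposition \ref{proposition_kjetperturbationeffect} applied with $k=1$, the 1-jet of $\pi^+_\epsilon$ at $\theta^-$ equals (up to $O(\epsilon^2)$) that of $\pi^+_0$ composed with the Hamiltonian flow of a quadratic Hamiltonian whose second-order part is $\kappa(\gamma(y_0),\gamma'(y_0))\,\alpha(y_0)\cdot\tfrac{1}{2}\langle Ay,y\rangle$ pulled back by $\pi^+_0$. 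In Darboux coordinates on $T_\theta\Sigma^+$ chosen so that $T_\theta L^+$ is the vertical Lagrangian $\{v=0\}$ and $D\pi^+_0(T_{\theta^-}L^-)$ is the graph of a symmetric matrix $S_0$, this modifies the graph to $S_0+\epsilon\tilde A+O(\epsilon^2)$, where $A\mapsto \tilde A$ is a linear isomorphism between symmetric $d\times d$ matrices (invertibility coming from $\int\kappa\alpha\,dy_0\neq 0$). Transversality amounts to invertibility of $S_0+\epsilon\tilde A$; for any invertible $\tilde A$, the polynomial $\epsilon\mapsto\det(S_0+\epsilon\tilde A)$ has leading term $\epsilon^d\det(\tilde A)$ and hence only finitely many roots, so generic choice of $A$ gives transversality for arbitrarily small $\epsilon>0$. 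If the graph condition for $D\pi^+_0(T_{\theta^-}L^-)$ fails relative to the chosen Darboux frame, I would combine two such perturbations supported at different $y_0$-slices, invoking the $2$-generality result of Proposition \ref{proposition_densekgenerality} to realize an arbitrary linear symplectic perturbation of the Poincar\'e map. Openness of transversality then preserves the property for all sufficiently small $\epsilon$, and by continuity the perturbed heteroclinic point lies in $U_1$.

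The main obstacle is the last step: verifying that the restricted class of hypersurface perturbations --- which, as emphasised in Remark \ref{remark_restrictiveperturbation}, is significantly smaller than the full space of metric perturbations --- still provides a $\tfrac{d(d+1)}{2}$-parameter family of Lagrangian tilts, exactly matching the dimension of the Lagrangian Grassmannian and hence sufficient to reach the open dense set of Lagrangians transverse to $T_\theta L^+$. This hinges on the nonvanishing of the normal curvature in $\U$ via the factor $\tilde C_{00}(y_0,0)=\kappa(\gamma(y_0),\gamma'(y_0))$ appearing in equation \eqref{eq_perturbationmatrix00}, which is precisely what ensures the map $A\mapsto\tilde A$ between symmetric matrices is invertible.
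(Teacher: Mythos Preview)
Your approach is valid but takes a genuinely different route from the paper's. The paper does not use the $k$-jet machinery of Section~\ref{section_klingenbergtakens} at all. Instead, it first invokes Robinson's general Hamiltonian-perturbation result \cite{robinson1970generic} to obtain an abstract potential $V$ (not arising from a hypersurface perturbation) for which the perturbed unstable manifold $\tilde W^u_\epsilon(\gamma)$, written locally as $\mathrm{graph}(v^*+\epsilon\bar v^*)$, is transverse to $W^s(\eta)$. It then \emph{reverse-engineers} a hypersurface perturbation $\psi$ so that the truncated geodesic Hamiltonian $H^*_\epsilon = H+\epsilon\bar H$ has this very graph lying in the energy level $\tfrac12$; by Lemma~\ref{lemma_lagrangiantangent} the graph is then invariant under the flow of $H^*_\epsilon$ and hence equals its unstable manifold, and an $O(\epsilon^2)$ comparison finishes the proof. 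The explicit formula is simply $\psi = -\,g(v^*,\bar v^*)/\tilde\kappa(\cdot,v^*)$ on $\pi(U_2)$, which is where the curvature hypothesis~\eqref{eq_notflatassumption} enters. Your argument, by contrast, parametrises perturbations by a symmetric matrix $A$, computes the induced tilt of the Lagrangian $T_{\theta^-}L^-$ via Proposition~\ref{proposition_kjetperturbationeffect}, and reduces transversality to a polynomial-in-$\epsilon$ determinant condition. This is more self-contained---it does not appeal to Robinson's theorem---but it is also more delicate: the assertion that $A\mapsto\tilde A$ is an isomorphism really requires taking $\alpha$ supported very close to the left endpoint (so that $P_{Q,t}\approx\mathrm{Id}$ on its support), and the graph hypothesis on $D\pi^+_0(T_{\theta^-}L^-)$ is unnecessary once you observe that for \emph{some} fixed $\epsilon_0$ the map $A\mapsto L_1^{\epsilon_0}$ already covers an open chart of the Lagrangian Grassmannian, so a generic $A$ gives $p_A(\epsilon_0)\neq 0$ and hence $p_A\not\equiv 0$. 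The paper's energy-level trick sidesteps all of this bookkeeping.
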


\begin{proof}
Since $\U$ is siffuciently small, and since $\theta \in \U$, we may pass to Fermi coordinates $(y_0, y) = (y_0, y_1, \ldots, y_d)$ adapted to $\gamma$ (see Section \ref{sec_fermicoordinates}). Let $(v_0, v) = (v_0, v_1, \ldots, v_d)$ denote the corresponding tangent coordinates. Then we know that the geodesic flow is the flow of the Hamiltonian function
\begin{equation}
H(y_0, y, v_0, v) = \frac{1}{2} g (y_0, y) ((v_0,v), (v_0,v)) = \frac{1}{2} \sum_{i,j=0}^d g_{ij} (y_0, y) v_i v_j.
\end{equation}
Since $\pi |_{W^u (\gamma)}$ is a diffeomorphism in a neighbourhood of $\theta$, the connected component $W$ of $W^u (\gamma) \cap U_3$ containing $\theta$ is 
\begin{equation}
W = \textrm{graph} (v^*) = \{ ((y_0,y), v^* (y_0,y)) : (y_0, y) \in \pi (W) \}
\end{equation}
where $v^* = (v^*_0, v^*_1, \ldots, v^*_d)$. A result of Robinson implies that there is a $C^{\infty}$ smooth potential $V$ (that does not necessarily correspond to a geodesic flow) such that 
\begin{equation}
\textrm{supp} (V) \subseteq U_2
\end{equation}
and if we write
\begin{equation}
\tilde{H}_{\epsilon} = H + \epsilon V
\end{equation}
then the perturbed unstable manifold $\tilde{W}^u_{\epsilon} (\gamma)$ corresponding to the Hamiltonian flow of $\tilde{H}_{\epsilon}$ is transverse to the unperturbed stable manifold $W^s (\eta)$ in the neighbourhood $U_1$ of $\theta$ for arbitrarily small values\footnote{In the proof of Claim a, Theorem 3 of \cite{robinson1970generic}, $c$ is the small parameter. The claim is not proved explicitly in that paper, but it is equivalent to the proof of Claim a of Theorem 1 of \cite{robinson1970generic}.} of the parameter \cite{robinson1970generic} (see also \cite{1078-0947_2008_2_551}). Since $W^s (\eta)$ is unchanged up to the support of the perturbation, the new stable and unstable manifolds are transverse.

Let $W_{\epsilon}$ denote the connected component of $\tilde{W}^u_{\epsilon} (\gamma) \cap U_3$ containing $\theta$. If $\epsilon$ is small enough then $\pi |_{W_{\epsilon}}$ is still a diffeomorphism, so we can write
\begin{equation}
W_{\epsilon} = \textrm{graph} (v^* + \epsilon \bar{v}^*) = \{ ((y_0,y), v^*(y_0,y) + \epsilon 
\bar{v}^* (y_0,y)) : (y_0,y) \in \pi (W_{\epsilon}) \}
\end{equation}
where $\bar{v}^*$ may depend on $\epsilon$. Now make the perturbation $Q \to Q + \epsilon \psi$ and consider the perturbed metric $g_{\epsilon} = g + \epsilon \bar{g} + O(\epsilon^2)$ where $\bar{g}_{ij}$ is defined by \eqref{eq_metricpertcomps}. Write
\begin{equation}
\tilde{\kappa} ((y_0,y),(v_0,v)) = \sum_{i,j=0}^d \tilde{C}_{ij} (y_0,y) v_i v_j.
\end{equation}
Due to \eqref{eq_perturbationmatrix00} and \eqref{eq_notflatassumption} we have
\begin{equation}
\tilde{\kappa} ((t_0,0),(1,0)) = \tilde{C}_{00} (t_0,0) = \kappa (\gamma (t_0), \gamma' (t_0)) \neq 0.
\end{equation}
Since $\U$ is a sufficiently small neighbourhood of $(\gamma (t_0), \gamma' (t_0))$ we have
\begin{equation} \label{eq_notflatassumption2}
\tilde{\kappa} ((y_0,y),(v_0,v)) \neq 0
\end{equation}
for all $((y_0,y),(v_0,v)) \in \U$.

The Hamiltonian of the perturbed geodesic flow is
\begin{equation}
H_{\epsilon} (y_0,y,v_0,v) = \frac{1}{2} g(y_0,y) ((v_0,v), (v_0,v)) + \frac{1}{2} \epsilon \bar{g} (y_0,y) ((v_0,v), (v_0,v)) + O(\epsilon^2).
\end{equation}
On the manifold $W_{\epsilon}$ we have 
\begin{equation}
\hspace{-14em} g_{\epsilon} (y_0,y) ( v^*(y_0,y) + \epsilon \bar{v}^* (y_0,y), v^*(y_0,y) + \epsilon \bar{v}^* (y_0,y)) = 
\end{equation}
\vspace*{-2.5em}
\begin{align}
\begin{split}
={}& g (y_0,y)(v^*(y_0,y) + \epsilon \bar{v}^* (y_0,y), v^*(y_0,y) + \epsilon \bar{v}^* (y_0,y)) + \\
 &+ \epsilon \bar{g} (v^* (y_0,y), v^* (y_0,y)) + O(\epsilon^2)
\end{split} \\
\begin{split}
={}& g (y_0,y) (v^* (y_0,y), v^* (y_0,y)) + \\
& + \epsilon \left[ 2 \psi (y_0,y) \tilde{\kappa} ((y_0,y), v^* (y_0,y)) + 2 g (y_0,y) ( v^* (y_0,y), \bar{v}^* (y_0,y)) \right] + O(\epsilon^2).
\end{split}
\end{align}
Notice that the term of order $\epsilon$ vanishes if we let
\begin{equation}
\psi (y_0,y) = 
\begin{cases}
- \frac{g (y_0,y)(v^*(y_0,y), \bar{v}^* (y_0,y))}{\tilde{\kappa} ((y_0,y), v^* (y_0,y))} & \textrm{ if } (y_0,y) \in \pi(U_2)\\
0 & \textrm{ if } (y_0,y) \notin \pi(U_3)
\end{cases}
\end{equation}
with a smooth transition on $\pi(U_3) \setminus \pi(U_2)$. This is well-defined due to \eqref{eq_notflatassumption2}. Consider the truncated perturbed Hamiltonian
\begin{equation}
H_{\epsilon}^* = H + \epsilon \bar{H}
\end{equation}
where we have removed the terms of order $\epsilon^2$. On the manifold $W_{\epsilon}$ we have
\begin{equation}
H_{\epsilon}^* ((y_0,y), v^* (y_0,y) + \epsilon \bar{v}^* (y_0,y)) = H ((y_0,y), v^* (y_0,y)) = \frac{1}{2}
\end{equation}
since
\begin{equation}
W = \textrm{graph} (v^*) \subset T^1 M.
\end{equation}
It follows that
\begin{equation}
W_{\epsilon} \subset \left( H_{\epsilon}^* \right)^{-1} \left(\frac{1}{2} \right).
\end{equation}
It is well-known that the stable and unstable manifolds of a hyperbolic periodic orbit of a Hamiltonian flow are Lagrangian submanifolds. Therefore, by Lemma \ref{lemma_lagrangiantangent}, the Hamiltonian vector field of $H_{\epsilon}^*$ is tangent to $W_{\epsilon}$. It follows that $W_{\epsilon}$ is a piece of the unstable manifold of $\gamma$ with respect to the flow of $H_{\epsilon}^*$. Moreover, $W_{\epsilon}$ is transverse to $W^s(\eta)$. Since $H_{\epsilon}$ is $O(\epsilon^2)$-close to $H_{\epsilon}^*$, the unstable manifold of $\gamma$ is transverse to the stable manifold of $\eta$ with respect to $H_{\epsilon}$ in the neighbourhood $U_1$ of $\theta$.
\end{proof}

The following result is also useful. A proof can be found in \cite{paternain2012geodesic} (Proposition 2.11).

\begin{proposition} \label{proposition_paternain}
Let $\phi^t : TM \to TM$ denote the geodesic flow, $\pi : TM \to M$ the canonical projection, and let $\theta \in TM$. Suppose $L \subset T_{\theta} TM$ is a Lagrangian subspace. Then the set
\begin{equation}
\left\{ t \in \mathbb{R} : D_{\theta} \phi^t (L) \cap \ker \left(D_{\phi^t (\theta)} \pi \right) \neq \{ 0 \} \right\}
\end{equation}
is discrete.
\end{proposition}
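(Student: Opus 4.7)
The plan is to translate the problem into the language of Jacobi fields and reduce it to showing that a certain determinant has only isolated zeros. First, using the Levi--Civita connection to split $T_\theta TM$ into horizontal and vertical parts, I would identify each $v \in T_\theta TM$ with the Jacobi field $J_v$ along $\gamma(t) = \pi \phi^t(\theta)$ whose initial value and covariant derivative at $0$ are the horizontal and vertical components of $v$. Under this correspondence $D_\theta\phi^t(v)$ matches with $(J_v(t), \nabla_t J_v(t))$, so $D_\theta\phi^t(v) \in \ker D_{\phi^t(\theta)}\pi$ is exactly the condition $J_v(t) = 0$. The symplectic form pulls back to $\omega(v,w) = \langle Y_v, Z_w\rangle - \langle Z_v, Y_w\rangle$ for $v = (Y_v, Z_v)$, and its invariance under $\phi^t$ gives the Wronskian identity
\[
\omega(v,w) = \langle J_v(t), \nabla_t J_w(t)\rangle - \langle \nabla_t J_v(t), J_w(t)\rangle,
\]
so the Lagrangian hypothesis $\omega|_L \equiv 0$ becomes the vanishing of the right-hand side for all $v, w \in L$ and all $t$.

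Next, I would fix a basis $v_1, \ldots, v_{d+1}$ of $L$ and set $f(t) = \det(J_{v_1}(t) | \cdots | J_{v_{d+1}}(t))$. The exceptional set in the statement is exactly $\{t : f(t) = 0\}$, so the task reduces to showing each zero of $f$ is isolated. Fix $t_0$ with $f(t_0) = 0$ and let $k = \dim\{v \in L : J_v(t_0) = 0\}$; by a change of basis I may assume $J_{v_1}, \ldots, J_{v_k}$ vanish at $t_0$ while $J_{v_{k+1}}(t_0), \ldots, J_{v_{d+1}}(t_0)$ are linearly independent. Writing $J_{v_i}(t) = (t-t_0)\, \widetilde{J}_i(t)$ for $i \le k$ with $\widetilde{J}_i(t_0) = \nabla_t J_{v_i}(t_0)$, I would factor out $(t-t_0)^k$ from the determinant and aim to show that the remaining matrix is invertible at $t = t_0$.

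That invertibility is the crux of the argument. The Lagrangian hypothesis plugged into the Wronskian identity for indices $i \le k < j$ gives $\langle \nabla_t J_{v_i}(t_0), J_{v_j}(t_0)\rangle = 0$, so the $k$ vectors $\nabla_t J_{v_1}(t_0), \ldots, \nabla_t J_{v_k}(t_0)$ all lie in the $k$-dimensional orthogonal complement of $\mathrm{span}\{J_{v_{k+1}}(t_0), \ldots, J_{v_{d+1}}(t_0)\}$. Uniqueness for the Jacobi equation forces these $k$ derivatives to be linearly independent (a nonzero Jacobi field vanishing at $t_0$ must have nonzero covariant derivative there), so they form a basis of that orthogonal complement. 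Concatenating gives a basis of $T_{\gamma(t_0)} M$ and hence a nonzero determinant at $t_0$, showing that $f$ vanishes to order exactly $k$ at $t_0$. The main obstacle is precisely this orthogonality-and-dimension count: it is the only place the Lagrangian hypothesis is actually used, and without it the determinant could in principle vanish identically. Everything else is routine translation between the horizontal/vertical splitting and the Jacobi-field picture.
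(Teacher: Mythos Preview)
Your argument is correct and is essentially the standard proof via Jacobi fields and the Lagrangian--Wronskian identity; this is precisely the approach in Paternain's book, which is what the paper cites rather than giving its own proof. In particular, the paper contains no argument to compare against beyond the reference to \cite{paternain2012geodesic}, Proposition~2.11, and your proposal reconstructs that proof accurately.
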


\begin{remark} \label{remark_transversality}
In Lemma \ref{lemma_localtransversality}, it is assumed that the restriction of the projection $\pi : TM \to M$ to $W^u (\gamma)$ is a diffeomorphism in a neighbourhood of a point $\theta \in W^u (\gamma)$. Proposition \ref{proposition_paternain} combined with the inverse function theorem tells us that this assumption fails to be true on an at most countable set of points on the orbit of $\theta$. Therefore if we start at a point $\theta \in W^u (\gamma)$ where the projection is not a diffeomorphism, then we can find arbitrarily small $t$ so that it is a diffeomorphism at $\phi^t (\theta)$, and so we can apply the lemma. Since transversality of stable and unstable manifolds is a property of orbits and not just points, if the perturbed invariant manifolds are transverse in a neighbourhood of $\phi^t (\theta)$, then they are transverse in a neighbourhood of $\theta$.
\end{remark}

\begin{theorem} \label{theorem_transversalityofconnections}
Let $Q \in \mathcal{V}$ and $M = M(Q)$, and suppose $\gamma, \eta$ are hyperbolic closed geodesics with a heteroclinic (or homoclinic in the case where $\gamma = \eta$) connection. Then there is $\tilde{Q}$ arbitrarily close to $Q$ in $\mathcal{V}$ such that the hyperbolic closed geodesics $\gamma, \eta$ persist on $M(\tilde{Q})$, and the connection between them is transverse.
\end{theorem}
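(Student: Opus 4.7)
The strategy is to reduce the theorem to Lemma~\ref{lemma_localtransversality}: propagate the given heteroclinic point along the flow to a location where the lemma applies, invoke the lemma to obtain a $C^\infty$ perturbation, and then pass to a real-analytic approximation using the openness of transversality.

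First I pick a time $t_0$ at which $\kappa(\gamma(t_0),\gamma'(t_0)) \neq 0$; such a $t_0$ exists because $\gamma$ is a nonconstant closed geodesic, so if $\kappa$ were identically zero along $\gamma$ then~\eqref{eq_geodesicfloweom} would force $\gamma$ to be a straight-line segment, contradicting closedness. Let $\bar\theta \in W^u(\gamma) \cap W^s(\eta) \cap T^1 M$ be a point of the given connection. Since $\phi^t(\bar\theta)$ is asymptotic as $t \to -\infty$ to some orbit on the periodic set $\gamma$, there are arbitrarily negative times $t_*$ for which $\theta_* = \phi^{t_*}(\bar\theta)$ lies in any prescribed neighbourhood $\U$ of $(\gamma(t_0),\gamma'(t_0))$ in $TM$. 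By Proposition~\ref{proposition_paternain} and Remark~\ref{remark_transversality}, the set of times at which $\pi|_{W^u(\gamma)}$ fails to be a local diffeomorphism is discrete, so by a small further adjustment of $t_*$ I can also ensure $\pi$ is a local diffeomorphism at $\theta_*$ while keeping $\theta_* \in \U$.

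Next, I apply Lemma~\ref{lemma_localtransversality} at the point $\theta_*$: choosing nested neighbourhoods $U_1 \subset U_2 \subset U_3$ of $\theta_*$ in $W^u(\gamma)$ with $\overline{U_3} \subset \U$, the lemma produces a $C^\infty$ function $\psi : \mathbb{R}^{d+2} \to \mathbb{R}$ with $\textrm{supp}(\psi) \cap M \subseteq \pi(U_3)$ such that the perturbed unstable and stable manifolds become transverse near $\theta_*$ for all sufficiently small $\epsilon$. Because $\theta_*$ lies on neither closed geodesic, by shrinking $U_3$ I can ensure $\pi(U_3)$ is disjoint from $\gamma \cup \eta$, so that $\gamma$ and $\eta$ remain literally unchanged as hyperbolic closed geodesics of the $C^\infty$-perturbed hypersurface $M(Q+\epsilon\psi)$. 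Flow-invariance of stable and unstable manifolds then propagates transversality from the neighbourhood of $\theta_*$ to the entire heteroclinic orbit, in particular to a neighbourhood of $\bar\theta$.

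The main technical step is passing from $C^\infty$ to $C^\omega$. Fix $\epsilon > 0$ small enough that the previous paragraph applies, and approximate $Q + \epsilon\psi$ by a real-analytic $\tilde Q \in \V$ in a sufficiently strong $C^r$ topology (say $r = 4$, ample to control hyperbolicity and $C^1$ dependence of local invariant manifolds). Hyperbolicity of a closed orbit, persistence of a nearby intersection of local invariant manifolds, and transversality of that intersection are all open conditions in the $C^r$ topology on compact pieces, while the local stable and unstable manifolds themselves depend continuously in $C^1$ on the underlying system in $C^r$. For $\tilde Q$ sufficiently $C^r$-close to $Q + \epsilon\psi$, it follows that $\gamma$ and $\eta$ (or their nearby hyperbolic counterparts, still denoted $\gamma, \eta$) remain hyperbolic closed geodesics on $M(\tilde Q)$, a heteroclinic point persists near $\bar\theta$, and the connection is transverse there. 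The principal obstacle is this last step: the very restricted form of admissible perturbations coming from~\eqref{eq_fermiperturbation} (cf.\ Remark~\ref{remark_restrictiveperturbation}) forces one to use the Broer--Tangerman scheme described in the introduction, in order that the three open conditions, hyperbolicity of $\gamma$ and $\eta$, persistence of the heteroclinic intersection, and its transversality, are preserved simultaneously under the real-analytic approximation.
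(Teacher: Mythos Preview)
Your argument correctly establishes transversality along a \emph{single} heteroclinic orbit, but the theorem as stated and as used in the paper (in the proof of Theorem~\ref{theorem_kupkasmale}) requires that the \emph{entire} intersection $W^u(\gamma) \cap W^s(\eta)$ be transverse. The paper's proof achieves this stronger conclusion by choosing a compact set $K \subset W^u(\gamma) \cap \U$ that contains a fundamental domain for the geodesic flow restricted to $W^u(\gamma)$, covering $K$ by finitely many neighbourhoods $K_1, \ldots, K_n$ on each of which Lemma~\ref{lemma_localtransversality} provides a local perturbation $\psi_j$, and then applying these perturbations sequentially, using openness of transversality at each step to preserve the transversality already obtained on $K_1, \ldots, K_{j-1}$. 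Since every heteroclinic orbit meets the fundamental domain, this yields transversality of the full intersection. Your approach of perturbing at a single point $\theta_*$ controls only the orbit through $\bar\theta$; if there are other heteroclinic orbits (and there may be uncountably many), they remain untouched. To close the gap, you would need to iterate your argument over a finite cover of a fundamental domain, exactly as the paper does.

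A minor point: your requirement that $\pi(U_3)$ be disjoint from $\gamma \cup \eta$ is both unnecessary and awkward to arrange, since you chose $\theta_*$ close to $(\gamma(t_0),\gamma'(t_0))$ and hence $\pi(\theta_*)$ close to $\gamma$. The paper does not impose this condition; it simply relies on structural stability of hyperbolic periodic orbits under small $C^4$ perturbations, which you already invoke in the real-analytic approximation step anyway.
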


\begin{proof}
Since $\gamma : \mathbb{T} \to M$ is a closed geodesic, there exists $t_0 \in \mathbb{T}$ such that $\kappa (\gamma (t_0), \gamma' (t_0)) \neq 0$, as a result of \eqref{eq_geodesicfloweom}. Let $\U$ be a sufficiently small neighbourhood of $(\gamma (t_0), \gamma' (t_0))$ in $TM$, as in Lemma \ref{lemma_localtransversality}.

Suppose $W^u (\gamma) \cap W^s (\eta) \neq \emptyset$. Recall that a fundamental domain for the geodesic flow $\phi^t$ on $W^u (\gamma)$ is a subset of $W^u (\gamma)$ that intersects every orbit of $\phi^t |_{W^u (\gamma)}$ exactly once. We can find arbitrarily small fundamental domains arbitrarily close to $\gamma$. Let $K \subset W^u (\gamma) \cap \U$ be a sufficiently small compact set, sufficiently close to $\gamma$, such that $K$ contains a fundamental domain of $\phi^t |_{W^u (\gamma)}$.

Let $\theta_1 \in K$, and suppose $\pi |_{W^u(\gamma)}$ is a diffeomorphism in a neighbourhood $K_1$ of $\theta_1$, where $K_1$ is sufficiently small (i.e. $K_1$ plays the role of the set $U_1$ in Lemma \ref{lemma_localtransversality}). Then, by Lemma \ref{lemma_localtransversality}, there is a locally supported function $\psi_1$ such that for arbitrarily small values of $\epsilon$, the connected component of $W^u(\gamma) \cap K_1$ containing $\theta_1$ is transverse to $W^s(\eta)$ on the manifold $M(Q + \epsilon \psi_1)$.

If $\theta_1$ does not have the property that $\pi |_{W^u(\gamma)}$ is a diffeomorphism in a neighbourhood of $\theta_1$, then we can find some small time $t$ such that $\phi^t (\theta_1) \in \U$ does have that property, by Proposition \ref{proposition_paternain}. Then let $\bar{K}_1$ denote a sufficiently small neighbourhood of $\phi^t (\theta_1)$ where we can apply Lemma \ref{lemma_localtransversality} to find the appropriate perturbation $\psi_1$, localised near $\phi^t (\theta_1)$. Let $K_1 = \phi^{-t}(\bar{K}_1)$. Then for arbitrarily small values of $\epsilon$ we get transversality of $W^s(\eta)$ and $W^u(\gamma)$ in the set $K_1 \cap M(Q+ \epsilon \psi_1)$ (see Remark \ref{remark_transversality}).

Since $K$ is compact, there is $n \in \mathbb{N}$ such that for $j = 1, \ldots, n$ we have a neighbourhood $K_j$ of $\theta_j$ in $W^u(\gamma)$ and locally supported functions $\psi_j$ such that $W^s(\eta)$ and $W^u(\gamma)$ are transverse in $K_j$ on the manifold $M(Q + \epsilon \psi_j)$, and
\begin{equation}
K \subset \bigcup_{j=1}^n K_j.
\end{equation}
We first make a sufficiently small perturbation $Q \to Q + \epsilon_1 \psi_1$ to get transversality on $K_1$. Since the property of $W^s(\eta)$ and $W^u(\gamma)$ being transverse in $K_1$ is open, we may then find sufficiently small $\epsilon_2$ such that if we make the perturbation $Q+ \epsilon_1 \psi_1 \to Q+ \epsilon_1 \psi_1+ \epsilon_2 \psi_2$, we obtain transversality of $W^s(\eta)$ and $W^u(\gamma)$ in $K_2$ without destroying the transversality in $K_1$. Repeating this process $n$ times, each time taking care not to destroy transversality in the previous neighbourhood $K_{j}$, we find that there are arbitrarily small values of the parameters $\epsilon = (\epsilon_1, \ldots, \epsilon_n)$ such that if
\begin{equation}
\tilde{Q}_{\epsilon} = Q + \sum_{j=1}^n \epsilon_j \psi_j,
\end{equation}
then the stable manifold of $\eta$ is transverse to the unstable manifold of $\gamma$ on the manifold $M(\tilde{Q}_{\epsilon})$. We can now approximate $\tilde{Q}_{\epsilon}$ arbitrarily well by an $n$-parameter real-analytic family $Q_{\epsilon}$. If the approximation is sufficiently good in the $C^4$-topology, then $\eta, \gamma$ are still hyperbolic closed geodesics on the manifold $M(Q_{\epsilon})$, and $W^s(\eta), W^u(\gamma)$ are still transverse for arbitrarily small values of $\epsilon$. Therefore we can take $\epsilon$ small enough to find $\tilde{Q}$ arbitrarily close to $Q$ in $\V$ for which we get the desired transversality property. 
\end{proof}

\begin{proof}[Proof of Theorem \ref{theorem_kupkasmale}]
Let $N \in \mathbb{N}$ and let $\H(N)$ denote the set of $Q \in \V$ for which the $k$-jet of the Poincar\'e map of every closed geodesic of length at most $N$ in $M(Q)$ lies in $\J$. Intersect $\J$ with the set of $k$-jets of symplectic automorphisms of $\mathbb{R}^{2d}$ whose linearisation at the origin does not have an eigenvalue equal to 1. Then $\J$ is still open, dense, and invariant. On such a manifold $M(Q)$ there can be only finitely many periodic orbits of length at most $N$. Therefore $\H(N)$ is open since $\J$ is. It follows that there is an open and dense set $\B (N) \subset \V$ such that for all $Q \in \B (N)$, every closed geodesic of length at most $N$ on $M(Q)$ is nondegenerate. Then for each $Q$ in $\B(N)$, the manifold $M(Q)$ admits only finitely many closed geodesics of length at most $N$.

Now, let $Q_0 \in \V$. Arbitrarily close to $Q_0$ we can find $Q_1 \in \B(N)$ since $\B(N)$ is dense. Then there are only finitely many closed geodesics of length at most $N$ on $M(Q_1)$. Therefore we can apply Theorem \ref{theorem_ktlocal} to each of these closed geodesics to find some $Q_2$ arbitrarily close to $Q_1$ such that the $k$-jet of the Poincar\'e map of every closed geodesic of length at most $N$ in $M(Q_2)$ is in $\J$. Therefore $Q_2 \in \H^k(N)$. Since $Q_2$ is arbitrarily close to $Q_0$, this proves that $\H(N)$ is dense. This proves part \eqref{item_kupkasmale1} of Theorem \ref{theorem_kupkasmale}.

Let $\K(N)$ denote the set of $Q \in \H(N)$ such that if $\gamma, \eta$ are hyperbolic closed geodesics on $M(Q)$ of length at most $N$, then $W^s(\eta), W^u(\gamma)$ are transverse. Since transversality is an open property, $\K(N)$ is open. It remains to prove that $\K(N)$ is dense.

Let $Q \in \H(N)$, and let $\gamma, \eta$ be hyperbolic closed geodesics on $M(Q)$ of length at most $N$. If there is no heteroclinic connection (or homoclinic if $\gamma = \eta$) between $\gamma$ and $\eta$, or if any such connection is transverse, there is nothing to prove. Otherwise, there is a non-transverse intersection. We may then apply Theorem \ref{theorem_transversalityofconnections} to make this intersection transverse.

Repeating this process for each of the finitely many pairs of hyperbolic closed geodesics of length at most $N$ completes the proof of density of $\K(N)$. It follows that
\begin{equation}
\K = \bigcap_{N \in \mathbb{N}} \K (N)
\end{equation}
is the residual set we are looking for.
\end{proof}

\section{Generic Existence of Hyperbolic Sets on Surfaces} \label{section_3dgenericityofhyperbolicsets}

In this section it is shown that the geodesic flow with respect to the Euclidean metric on real-analytic, closed, and strictly convex surfaces in $\mathbb{R}^3$ generically (i.e. for a $C^{\omega}$ open and dense set of such surfaces) has a transverse homoclinic orbit as a result of Theorems \ref{theorem_kupkasmale} and \ref{theorem_homoclinicnearelliptic}, a theorem of Mather \cite{mather1981invariant}, and an argument of Knieper and Weiss \cite{knieper2002c}, thus proving Theorem \ref{theorem_3dgenerichyperbolicset}. Since a surface $M =M(Q)$ having a transverse homoclinic orbit for its geodesic flow is a $C^4$-open property of $Q \in \V^c$, it is required to show only that it is $C^{\omega}$-dense.

Theorem \ref{theorem_homoclinicnearelliptic} implies that any closed strictly convex analytic surface in $\mathbb{R}^3$ with a nonhyperbolic closed geodesic $C^{\omega}$-generically has a transverse homoclinic to a hyperbolic closed geodesic. Moreover, Theorem \ref{theorem_kupkasmale} implies that, on $C^{\omega}$-generic surfaces, all homoclinic and heteroclinic connections are transverse. Therefore in what follows we consider only the case where every closed geodesic is hyperbolic and every homoclinic or heteroclinic connection is transverse (the ``Kupka-Smale condition'').

Let $Q \in \V^c$, $M = M(Q)$ such that the above assumptions hold, and recall that $\phi^t : TM \to TM$ denotes the geodesic flow with respect to the Euclidean metric on $M$. A variational construction due to Birkhoff implies the existence of a simple closed geodesic $\gamma : \mathbb{T} \to M$ which is referred to as the ``minimax'' geodesic \cite{birkhoff1927dynamical}. Let $A = \mathbb{T} \times [0, \pi]$. Notice that the simple closed curve $\gamma$ divides $M$ into two hemispheres. Since $M$ is strictly convex and $\gamma$ is the minimax geodesic, a theorem of Birkhoff (Section VI, 10 of \cite{birkhoff1927dynamical}) implies that there is a well-defined analytic function $\tau : \textrm{Int}( A) \to \mathbb{R}$ defined as follows. If $(\varphi, y) \in \textrm{Int} (A)$ and $x=\gamma (\varphi)$, then there is a uniquely defined unit tangent vector $u \in T^1_{x} M$ pointing into the northern hemisphere and making an angle $y$ with $\gamma' (\varphi)$. Then $\tau (\varphi, y)$ is the minimum value of $t >0$ for which $(\bar{x}, \bar{u}) = \phi^t (x,u)$ consists of a point $\bar{x}$ on the curve $\gamma$ and a unit tangent vector $\bar{u} \in T^1_{\bar{x}}M$ pointing again into the northern hemisphere. The function $\tau$ is then extended continuously to $\partial A$.

This implies the existence of a global Poincar\'e map $P:A \to A$ defined as follows. If $(\varphi, y) \in A$ and $(x,u)$ are as above, then let $(\bar{x}, \bar{u}) = \phi^{\tau (\varphi, y)} (x,u)$. Then there is $\bar{\varphi} \in \mathbb{T}$ such that $\bar{x} = \gamma (\bar{\varphi})$. Since $\bar{u}$ is pointing into the northern hemisphere, the angle $\bar{y}$ between $\bar{u}$ and $\gamma' (\bar{\varphi})$ is in $[0, \pi]$. Then $P(\varphi, y) = (\bar{\varphi}, \bar{y})$. Moreover $P$ is analytic on $\textrm{Int}(A)$ and continuous on $\partial A$.

Since $M$ is topologically a 2-sphere, the theorem of the three closed geodesics implies that $M$ has at least three geometrically distinct simple closed geodesics \cite{ballmann1978satz} (see also \cite{klingenberg1976lectures}). Let $\eta : \mathbb{T} \to M$ denote one of these simple closed geodesics, geometrically distinct from $\gamma$. This geodesic is hyperbolic by assumption, and therefore implies the existence of a hyperbolic fixed point $z \in \textrm{Int}(A)$ for (an iterate of) the global Poincar\'e map $P$. The idea of the proof is to apply the following theorem of Mather \cite{mather1981invariant}.

\begin{theorem}[Mather] \label{theorem_mather}
Let $U \subseteq \mathbb{S}^2$ be an open set and $f: U \to f(U) \subseteq \mathbb{S}^2$ a homeomorphism. Suppose every fixed point of $f$ is sectorial periodic or Moser stable, and suppose $f$ has no fixed connections. Let $O$ be a sectorial fixed point of $f$ and let $b_1, b_2$ be any two stable or unstable branches of $O$. Then the closures of $b_1$ and $b_2$ coincide.
\end{theorem}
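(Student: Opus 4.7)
The plan is to show that for any two branches $b_1, b_2$ of the sectorial fixed point $O$, we have $\overline{b_1} \subseteq \overline{b_2}$; by symmetry this yields equality. Since $O$ is sectorial, a neighborhood of $O$ decomposes into hyperbolic sectors whose boundaries are initial pieces of the branches. First I would verify that each closure $\overline{b_i}$ is a compact, connected, $f$-invariant subset of $\mathbb{S}^2$ containing $O$: invariance comes from $f$-invariance of $b_i$ itself, connectedness from $b_i$ being a continuous image of a ray, and compactness from $\mathbb{S}^2$. The goal is then to prove $\overline{b_1} = \overline{b_2}$ by analyzing the complementary regions.

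Next I would study $V = \mathbb{S}^2 \setminus (\overline{b_1} \cup \overline{b_2})$. Each connected component $U$ of $V$ is open, and after replacing $f$ by a suitable iterate I may assume $U$ is $f$-invariant. The key tool is Carath\'eodory's prime end compactification: $U$ acquires a circle of prime ends $\mathcal{E}(U)$, and $f|_U$ extends to an orientation-preserving homeomorphism of $\overline{U} \sqcup \mathcal{E}(U)$, inducing a rotation number $\rho \in \mathbb{R}/\mathbb{Z}$ on $\mathcal{E}(U)$. I would argue that under the hypotheses $\rho$ must be zero: an irrational $\rho$ would force a recurrent orbit on $\mathcal{E}(U)$, and by passing to impressions of prime ends in $\partial U$ one would obtain an invariant continuum of non-wandering points in $\partial U$ incompatible with the discrete collection of sectorial points and the isolating invariant disks of Moser stable points; a rational $\rho \neq 0$ would produce periodic prime ends whose impressions give periodic points on $\partial U$, and a Cartwright--Littlewood-type argument would convert these into fixed points of $f$ on $\partial U$ whose branch structure would necessarily violate the no-fixed-connections hypothesis.

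With $\rho = 0$ established, there is a fixed prime end of $U$. I would show that its impression contains a fixed point $O' \in \partial U$ of $f$. By hypothesis $O'$ is sectorial or Moser stable; Moser stability is ruled out because a Moser stable point sits in the interior of a family of invariant closed curves, so it cannot lie on $\partial U$ without a branch of $O$ terminating at $O'$, which would be a fixed connection. Hence $O'$ is sectorial, and its branches lie in $\partial U \subseteq \overline{b_1} \cup \overline{b_2}$. A combinatorial argument on the cyclic order of sectors at $O$ and at $O'$, together with the fact that $\partial U$ is an alternating union of branch pieces of $O$ and of $O'$, then forces $\overline{b_1}$ and $\overline{b_2}$ to contain precisely the same sectorial orbits and hence to agree. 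The main obstacle will be the prime-end step: controlling $\rho$ without direct appeal to area preservation (the statement as written is purely topological) requires Mather's technique of tracking how the cyclic order of branch impressions around $\partial U$ is preserved by $f$, and rigorously ruling out all nonzero rotation numbers via this order structure is the delicate heart of the proof.
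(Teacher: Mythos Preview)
The paper does not contain a proof of this statement. Theorem~\ref{theorem_mather} is quoted from Mather's paper \cite{mather1981invariant} and is used as a black box in Section~\ref{section_3dgenericityofhyperbolicsets} to deduce Theorem~\ref{theorem_3dgenerichyperbolicset}; the paper's only commentary on it is the surrounding paragraph explaining that hyperbolic fixed points of the global Poincar\'e map are sectorial periodic and that the Kupka--Smale condition rules out fixed connections, so that Mather's hypotheses are met. There is therefore nothing in the paper to compare your proposal against.

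As a side remark, your sketch is broadly in the spirit of Mather's actual argument in \cite{mather1981invariant}: the proof there does proceed by studying the complementary domains of branch closures, passing to prime end compactifications, and exploiting the induced boundary dynamics to locate sectorial fixed points on the frontier, then ruling out configurations via the no-fixed-connections hypothesis. Your outline is plausible at that level of resolution, though the step where you dispose of nonzero rotation numbers is, as you anticipate, the substantive part, and your treatment of it is still schematic rather than a proof. But since the paper itself offers no argument for this theorem, any assessment of your proposal would have to be made against Mather's original paper rather than against the present one.
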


Since the map must be defined on an open set, notice that we can extend $P$ continuously to an open annulus $A'$ containing $A$, for example by reflecting the dynamics of $P|_{\textrm{Int}(A)}$ across $\partial A$.

The precise definitions of \emph{sectorial periodic} and \emph{Moser stable} fixed points can be found in \cite{mather1981invariant}, but sectorial periodic points can be thought of as a topological alternative to hyperbolic periodic points with one contracting and one expanding eigendirection. Indeed, Mather points out in \cite{mather1981invariant} that hyperbolic fixed points of maps in dimension 2 with one expanding and one contracting eigendirection are sectorial periodic. Since all closed geodesics on $M$ are hyperbolic, so too are all fixed points of $P$. Moreover, $P$ is a symplectic map so the characteristic eigenvalues of hyperbolic fixed points are reciprocal pairs. Therefore every fixed point of $P$ in $\textrm{Int}(A)$ is sectorial periodic.

A \emph{fixed connection} is a $P$-invariant arc $\xi$ with fixed endpoints \cite{mather1981invariant}. The existence of such an arc for $P$ would imply the existence of a non-transverse homoclinic or heteroclinic connection, which is impossible due to the Kupka-Smale condition that we have assumed. It follows that $P$ satisfies the assumptions of Mather's theorem, and so the closure of all 4 stable and unstable branches of the hyperbolic fixed point $z$ coincide. Therefore the following result of Knieper and Weiss \cite{knieper2002c} applies.

\begin{proposition}[Knieper and Weiss] \label{proposition_knieperweiss}
Let $A$ be a surface to which the Jordan curve theorem can be applied, and let $f: A \to A$ be a diffeomorphism with a hyperbolic fixed point $z$. Suppose the closure of a branch of $W^s(z)$ coincides with the closure of a branch of $W^u(z)$. Then the two branches have a topological crossing.
\end{proposition}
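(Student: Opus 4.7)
The plan is to argue by contradiction: assume that $b^s$ and $b^u$ have no topological crossing and derive a contradiction with the hypothesis $\overline{b^s} = \overline{b^u}$. The key idea is that, in the absence of a topological crossing, any accumulation of one branch onto the other must be one-sided, and the Jordan curve theorem combined with the dynamics then produces an inconsistency.

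I would first establish the local picture near $z$. Since $z$ is a hyperbolic fixed point, there is a small disk $D$ containing $z$ on which the local stable and unstable manifolds $W^s_{\mathrm{loc}}$ and $W^u_{\mathrm{loc}}$ are smooth transverse arcs through $z$. Let $\alpha_s \subset b^s \cap D$ be the initial sub-arc of $b^s$, so that $\alpha_s$ lies along $W^s_{\mathrm{loc}}$. Since $\alpha_s \subset b^s \subset \overline{b^s} = \overline{b^u}$, every interior point $p$ of $\alpha_s$ is the limit of a sequence $w_n \in b^u$. For such $p$, choose a small disk $D_p$ in which $b^s \cap D_p$ is a single smooth embedded arc separating $D_p$ into two half-disks $D_p^+$ and $D_p^-$. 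I would then analyze the connected components of $b^u \cap D_p$ that accumulate on $p$: the no-crossing hypothesis implies (modulo tangential touches) that each such component enters and exits $\partial D_p$ on the same side of $b^s$, and therefore lies entirely in $\overline{D_p^+}$ or entirely in $\overline{D_p^-}$. A connectedness argument along $b^u$ would then show that double-sided accumulation forces the existence of a topological crossing, contradicting the assumption. Hence $b^u$ accumulates on $p$ from exactly one side.

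By the symmetric argument with the roles of $b^u$ and $b^s$ exchanged, I similarly obtain a one-sided accumulation of $b^s$ on each interior point of the initial arc $\alpha_u \subset b^u \cap D$. Using the $f$-invariance of both branches together with the fact that $f$ contracts along $b^s$ toward $z$ and expands along $b^u$ away from $z$, I would propagate these one-sidednesses along the entirety of $b^s$ and $b^u$. The resulting one-sided configurations, determined locally near $z$ by the relative position of $\alpha_s$ and $\alpha_u$ within a given quadrant of $D$, are then incompatible when traced out globally: the Jordan curve theorem, applied to a simple closed curve built from $\alpha_s$ together with an auxiliary separating arc, forces $\overline{b^u}$ into a proper closed subset of $A$ that excludes at least one of the two components of $A \setminus (b^s \cup \{z\})$; but $\overline{b^u} = \overline{b^s}$ must contain $b^u$, which by the local transverse intersection at $z$ ultimately reaches both such components. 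This yields the required contradiction.

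The main obstacle is making the second step rigorous, specifically ruling out configurations where $b^u$ returns infinitely often to a neighborhood of a point $p \in b^s$ with successive arcs lying on alternating sides of $b^s$ while never topologically crossing it. Eliminating such configurations uses the connectedness of $b^u$ together with its fundamental-domain description $b^u = \bigcup_{n \in \mathbb{Z}} f^n(I) \cup \{z\}$: any two nearby arcs of $b^u$ on opposite sides of $b^s$ near $p$ must be joined inside a bounded region by a finite portion of $b^u$, and the Jordan curve theorem forces that connecting portion to topologically cross $b^s$ somewhere, giving the contradiction.
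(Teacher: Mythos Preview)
The paper does not contain its own proof of this proposition: it is stated with the attribution ``[Knieper and Weiss]'' and a citation to \cite{knieper2002c}, and is then simply applied in the argument for Theorem~\ref{theorem_3dgenerichyperbolicset}. There is therefore no proof in the paper to compare your proposal against.

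That said, your outline follows the strategy of the original Knieper--Weiss argument (contradiction via one-sided accumulation and a Jordan curve separation), so you are on the right track. The weakest part of your sketch is the final contradiction: you appeal to ``a simple closed curve built from $\alpha_s$ together with an auxiliary separating arc'' and then claim $\overline{b^u}$ is trapped away from one component of $A \setminus (b^s \cup \{z\})$, but $b^s \cup \{z\}$ is a ray, not a Jordan curve, and does not separate $A$. In the Knieper--Weiss proof the closed curve is built from an arc of $b^u$ together with an arc of $b^s$ (plus short connectors near $z$), and the trapping uses both the one-sidedness and the fact that an injectively immersed branch cannot cross \emph{itself}. Your write-up would need to make explicit which Jordan curve you use and why the one-sided accumulation prevents $b^u$ (or $b^s$) from escaping the bounded complementary region, before you can legitimately conclude $\overline{b^s} \neq \overline{b^u}$.
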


The proposition implies that the stable and unstable manifolds of $z$ intersect. By the Kupka-Smale condition, this intersection is transverse. The transverse homoclinic intersection of $W^s(z), W^u (z)$ for $P$ implies the transverse homoclinic intersection of $W^s(\eta), W^u (\eta)$ for the geodesic flow, completing the proof of Theorem \ref{theorem_3dgenerichyperbolicset}.

\bibliographystyle{abbrv}
\bibliography{geodesic_flow_refs} 

\end{document}